\documentclass[11pt,letterpaper,reqno]{amsart}

\usepackage{amssymb, amsmath, amsthm}
\usepackage[colorlinks=true, urlcolor=blue, linkcolor=blue, citecolor=blue]{hyperref}
\usepackage[alphabetic,lite,nobysame]{amsrefs}
\usepackage{verbatim}
\usepackage{amscd}   
\usepackage[all]{xy} 
\usepackage{youngtab} 
\usepackage{young} 
\usepackage{ytableau}
\usepackage{tikz}
\usepackage{ mathrsfs }
\usepackage{cases}
\usepackage{array}
\usepackage{cellspace}
\usepackage{tabu}
\usepackage{calligra,mathrsfs}
\usepackage{bm}
\usepackage{mathtools}
\usepackage{tikz-cd}
\usepackage{calc}

\usepackage[margin=1in]{geometry}

\DeclareMathOperator{\ShHom}{\mathscr{H}\text{\kern -3pt {\calligra\large om}}\,}

\newcommand{\CC}{\mathbb{C}}

\newcommand{\m}{\mathfrak{m}}

\newcommand{\Gr}{\operatorname{Gr}}

\newcommand{\Tor}{\operatorname{Tor}}

\newcommand{\rk}{\operatorname{rank}}
\newcommand{\SL}{\operatorname{SL}}

\newcommand{\Sym}{\operatorname{Sym}}

\newcommand{\coker}{\operatorname{coker}}
\renewcommand{\det}{\operatorname{det}}

\renewcommand{\ker}{\operatorname{ker}}

\newcommand{\reg}{\operatorname{reg}}

\newcommand{\bb}[1]{\mathbb{#1}}

\newcommand{\mc}[1]{\mathcal{#1}}

\def\PP{{\mathbb P}}
\def\lra{\longrightarrow}

\newtheorem{theorem}{Theorem}[section]
\newtheorem*{theorem*}{Theorem}
\newtheorem*{problem*}{Problem}
\newtheorem{lemma}[theorem]{Lemma}
\newtheorem{conjecture}[theorem]{Conjecture}

\newtheorem{corollary}[theorem]{Corollary}
\newtheorem{remark}[theorem]{Remark}
\newtheorem*{corollary*}{Corollary}

\newtheorem*{theoremA'}{Theorem A$'$}

\newtheorem{thmx}{Theorem}

\theoremstyle{definition}

\newtheorem*{definition*}{Definition}
\newtheorem{example}[theorem]{Example}

\numberwithin{equation}{section}

\begin{document}
\title[The Eisenbud--Huneke--Ulrich conjecture and bounds on minimal generators]{The Eisenbud--Huneke--Ulrich conjecture\\ and bounds on minimal generators}
\author{Fuxiang Yang}
\address{Department of Mathematics, University of Notre Dame, 255 Hurley, Notre Dame, IN 46556}
\email{fyang6@nd.edu}

\subjclass[2020]{Primary 13D02, 13E10, 13E15}

\date{\today}

\keywords{}

\begin{abstract}
We prove the Eisenbud--Ulrich conjecture on the powers of linearly presented ideals in characteristic zero. More generally, we study stabilization bounds for powers of $\mathfrak{m}$-primary ideals in a polynomial ring with partially linear resolutions, making further progress toward the more general Eisenbud--Huneke--Ulrich conjecture. We prove a global generation result on relevant higher syzygy bundles, which is a crucial ingredient to the proof of the Eisenbud--Ulrich conjecture. We also establish lower bounds for the number of minimal generators in two settings: ideals generated by quadrics and ideals that are virtually linearly presented. Motivated by these results, we formulate a conjecture predicting sharp lower bounds for the number of minimal generators of such ideals.
\end{abstract}
\maketitle
\section{Introduction}\label{sec:intro}
Let $S = \CC[x_0,x_1,\dots,x_n]$ be a polynomial ring, and consider the homogeneous polynomials $f_0,f_1,\dots,f_m$ of degree $d > 0$. A version of Hilbert's Nullstellensatz implies that the following are equivalent:
\begin{enumerate}
    \item The set of points $a \in \CC^{n+1}$ such that $f_0(a) = \cdots = f_m(a) = 0$ is $\{0\}$.\vspace{0.05in}
    \item The rational map
    \[\phi \colon \PP^n \dashrightarrow \PP^m\]
    defined by the polynomials $f_0,\dots,f_m$ is a morphism.\vspace{0.05in}
    \item The ideal
    \[I \coloneqq (f_0,\dots,f_m)\]
    contains every homogeneous polynomial of sufficiently large degree.
\end{enumerate}
If these properties hold, the ideal $I$ is said to be \textbf{$\m$-primary}, where $\m = (x_0,\dots,x_n)$. Necessarily, the number of generators of $I$ is at least $n + 1$. It is natural to analyze the relationship between the algebraic properties of the ideal $I$ and the geometric properties of the morphism $\phi$. Eisenbud, Huneke, and Ulrich showed that if $I$ is $\m$-primary and \textbf{linearly presented}, that is, the first syzygy module of $I$ is generated by linear relations, then the map $\phi$ is a closed immersion. Equivalently, $I^t = m^{td}$ for sufficiently large $t$ \cite{EHU}*{Theorem~1.2}. Eisenbud and Ulrich conjectured\footnote{This conjecture first appeared in \cite{EHU}*{Conjecture~1.1}, where it is attributed to Eisenbud and Ulrich.} that the equality already holds for $t = n$. Eisenbud, Huneke, and Ulrich proved this conjecture for $n \le 2$ \cite{EHU}*{Theorem~1.3}, and we previously extended their result to $n \le 3$ \cite{yang}*{Theorem~1.2}. Chardin and Nemati also studied this problem using Jacobian dual matrices \cite{CN}*{Theorem~5}. Our main result settles the Eisenbud--Ulrich conjecture.
\begin{thmx}\label{mainThm:A}
    If $I$ is $\m$-primary and linearly presented, then $I^t = m^{td}$ for all $t \ge n$. Furthermore,
    \begin{enumerate}
        \item The number of generators of $I$ is at least $nd + 1$.
        \item The Castelnuovo--Mumford regularity of $\phi(\PP^n)$ is at most $n$.
    \end{enumerate}
\end{thmx}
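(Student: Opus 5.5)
Write $V = \langle f_0,\dots,f_m\rangle \subseteq S_d$. On $\PP^n$, let $M$ be the kernel bundle
\[0 \to M \to V\otimes \mathcal{O}_{\PP^n} \to \mathcal{O}_{\PP^n}(d) \to 0,\]
which is surjective on the right because $I$ is $\m$-primary. I will translate $I^t = \m^{td}$ into a cohomology vanishing. Since $S$ is standard graded, $I^t=\m^{td}$ holds exactly when $\Sym^t V \to H^0(\mathcal{O}(td))$ is surjective.

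The Koszul-type complex
\[\cdots\to \wedge^2 M\otimes \Sym^{t-2}V\otimes\mathcal{O}(2d)\to M\otimes \Sym^{t-1}V\otimes \mathcal{O}(d)\to \Sym^tV\otimes\mathcal{O}\to \mathcal{O}(td)\to 0\]
is exact. It is obtained from the Koszul resolution attached to the surjection $V\otimes\mathcal{O}\to\mathcal{O}(d)$, that is, $\Sym^t$ of a two-term complex. Splitting it into short exact sequences, surjectivity on global sections follows from
\[H^i\bigl(\wedge^i M\otimes \mathcal{O}((t-i)d)\bigr)=0 \quad\text{for } 1\le i\le n.\]
This chase is standard, since $H^{i}$ vanishes for $i>n$.

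Linear presentation should be encoded as follows. The module of linear syzygies $\operatorname{Syz}_1(I)_{d+1}$ is identified with $H^0(M(1))$, and "linearly presented" means the full syzygy sheaf is generated in degree one. Concretely, $M(1)$ is globally generated, and the natural multiplication maps are surjective, so $M(1)$ is $0$-regular in a suitable sense.

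From this I want the key global generation statement: $\wedge^i M(i)$ is globally generated, and in fact $\wedge^i M \otimes \mathcal{O}(j)$ has vanishing higher cohomology in the relevant range. This is presumably the "higher syzygy bundles" result the abstract refers to. I would deduce it from a surjection $\wedge^i H^0(M(1))\otimes\mathcal{O}\to \wedge^i M(i)$, which follows from global generation of $M(1)$. Then a Castelnuovo–Mumford regularity argument shows $H^i(\wedge^i M((t-i)d))=0$ once $(t-i)d \ge$ roughly $-i$, via Koszul-type estimates such as $\reg(\wedge^i E)\le i\,\reg(E)$ for a $0$-regular-type bundle $E=M(1)$ in characteristic zero.

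With this in hand, $\reg(M)\le 1$ gives $\reg(\wedge^i M)\le i$. Hence $H^i(\wedge^iM(k))=0$ for $k\ge 0$, and $t\ge n \ge i$ gives $(t-i)d\ge 0$. That proves $I^t=\m^{td}$.

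The main obstacle is that linear presentation only controls the first syzygies, not $\reg(M)\le 1$ outright: $H^1(M(k))$ might be nonzero. So the real work is showing, from linear presentation plus $\m$-primariness, the needed vanishing $H^{i}(\wedge^i M(k))=0$. I would try to prove it by induction on $i$ using the filtrations of $\wedge^i$ of the sequences $0\to K\to H^0(M(1))\otimes\mathcal{O}\to M(1)\to0$, needing characteristic zero to split Schur functors. I expect this to be the hardest step.

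The consequences then follow.

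For (1): $I^n=\m^{nd}$ gives $\dim\Sym^n V\ge\binom{nd+n}{n}$, which is not yet sharp. Instead I would argue that $\phi$ is an embedding with $\deg\phi(\PP^n)=d^n$ and apply the degree bound for nondegenerate varieties, $\deg X\ge \operatorname{codim}X+1$, together with linear presentation. Alternatively, I would use that the restriction of $I$ to a general line remains linearly presented and $\m$-primary in two variables, which forces at least $d+1$ generators there, and then induct on $n$ to get at least $nd+1$.

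For (2): the surjectivity $\Sym^tV\to H^0(\mathcal{O}(td))$ for $t\ge n$, combined with the vanishings above for the Koszul complex of the embedding, shows that the homogeneous coordinate ring of $X=\phi(\PP^n)\subseteq\PP^m$ agrees with its saturation in degrees $\ge n$. Since $H^i(\mathcal{O}_{\PP^n}(kd))=0$ for $0<i<n$, this yields $\reg X\le n$.
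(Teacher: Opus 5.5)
Your reduction to a cohomology vanishing does not work. The complex you display is not a complex: there is no natural map $M\otimes\Sym^{t-1}V\otimes\mc{O}(d)\to\Sym^tV\otimes\mc{O}$. The actual Koszul complex $\Sym^t(M\to V\otimes\mc{O})$ has terms $\bigwedge^iM\otimes\Sym^{t-i}V$ with no twists. Chasing it needs $H^1(M)=0$, whereas $H^1(M)=S_d/V\neq0$ unless $I=\m^d$.

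The vanishing you do state is much too weak. The sequences $0\to\bigwedge^iM\to\bigwedge^iV\otimes\mc{O}\to\bigwedge^{i-1}M(d)\to0$ give $H^i(\bigwedge^iM((t-i)d))\cong H^1(M((t-1)d))=(S/I)_{td}$ for $1\le i\le n-1$, and the $i=n$ group is a quotient of this. So your criterion only encodes $I_{td}=S_{td}$, not $(I^t)_{td}=S_{td}$. The chase therefore cannot be valid. Take the linearly presented ideal $J_{3,3}=(x_0^2,\dots,x_3^2)\cap\m^3$ of Example~\ref{ex:sharp>2} and $t=2$: every vanishing on your list holds because $(J_{3,3})_6=S_6$, yet $J_{3,3}^2\neq\m^6$.

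For the same reason, your proposed estimate $\reg(M)\le1$ (and hence $H^i(\bigwedge^iM(k))=0$ for $k\ge0$) is false already at $i=1$, $k=0$. The global generation of $\bigwedge^iM(i)$ that you derive is true, but it is not the input that matters.

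The missing idea is to work one syzygy step further. The paper takes $\mc{S}^\bullet_t=\Sym^t(\mc{A}^\bullet)$ for $\mc{A}^\bullet\colon V_1\otimes\mc{O}(-1)\to V\otimes\mc{O}$. Its terms $\bigwedge^iV_1\otimes\Sym^{t-i}V\otimes\mc{O}(-i)$ have no intermediate cohomology, so Lemma~\ref{lem:hypercohomology2} gives exactly $(S/I^t)_{td}=\mathbb{H}^1(\widehat{\mc{S}}^\bullet_t)$. Its cohomology sheaves are $\bigwedge^iE_1((t-i)d)$ with $E_1=\ker\partial_1$, not powers of $M$.

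The sequences $0\to\Sym^r(\mc{B}^\bullet)\to\widehat{\mc{S}}^\bullet_r\to\widehat{\mc{S}}^\bullet_{r-1}(d)\to0$ then give injections $\mathbb{H}^1(\widehat{\mc{S}}^\bullet_t)\hookrightarrow\cdots\hookrightarrow(S/I^k)_{td}$ with $k=\lceil n/2\rceil$. This requires $H^{r+1}(\bigwedge^rE_1((t-r)d))=0$ for $k<r\le t$. The endpoint $(S/I^k)_{td}$ vanishes by the earlier bound $\reg(I^k)\le(\lfloor n/2\rfloor+k)d=nd$.

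The required $H^{r+1}$ vanishing rests on the genuinely new ingredient: global generation of $E_1(d+1)$ (Theorem~\ref{lem:gg}). There the surjection $V\otimes E_1(1)\to E_1(d+1)$ is factored through $V\otimes V_1\otimes\mc{O}$, using a null-homotopy of multiplication by the generators of $I$ on the resolution of $S/I$. This lets one resolve $\bigwedge^rE_1$ and reduce to Lemmas~\ref{lem:vanishing1} and~\ref{lem:vanishing2'}. None of these ingredients---the bundle $E_1$, its global generation, or the descent to $I^{\lceil n/2\rceil}$---appears in your plan.

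Two smaller points. For (1), you mis-evaluated the count. Since $x\mapsto\binom{x+n}{n}$ is increasing, $\binom{\mu(I)-1+n}{n}\ge\binom{nd+n}{n}$ already forces $\mu(I)\ge nd+1$; this is sharp and is exactly the paper's argument. Your alternative via $\deg X\ge\operatorname{codim}X+1$ runs the wrong way: it gives $d^n\ge\mu(I)-n$, which is an upper bound. For (2), your sketch is fine once the main statement is known, and it matches the paper's use of $0\to R_X\to S^{(d)}\to C\to0$.
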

\noindent
The Castelnuovo--Mumford regularity of a projective variety $X$ controls, among other things, the degree from which the Hilbert function of its homogeneous coordinate ring agrees with the Hilbert polynomial, see Section~\ref{sec:prelim-CM} for the definition. At the extreme where $\phi_{|\mc{O}_{\PP^n}(d)|}$ is the Veronese embedding, it follows from \cite{CM}*{Theorem~1.4} that
\[\reg\left(\phi_{|\mc{O}_{\PP^n}(d)|} (\PP^n)\right) = n - \left\lfloor \frac{n}{d}\right \rfloor.\]
In this case, the ideal $I$ already coincides with $\m^d$. $I$ is linearly presented as the entire resolution is linear and the number of generators of $I$ is as large as possible. We will construct examples with fewer generators showing that the bound $n$ is optimal, see Examples~\ref{ex:sharp>2} and \ref{ex:sharp=2}.
\subsection*{Outline of the Proof} Our approach is cohomological. Let $\partial \colon V_1 \otimes \mc{O}_{\PP^n}(-1) \lra V \otimes \mc{O}_{\PP^n}$ be the sheafification of the minimal presentation of $I$. We construct a complex $\widehat{\mc{S}}^\bullet_n$ displayed in (\ref{eq:Sn}), where we reduce the problem to showing the vanishing of hypercohomology $\mathbb{H}^1\left( \widehat{\mc{S}}^\bullet_n\right)$. We then divide $\widehat{\mc{S}}^ \bullet_n$ into two parts and treat them separately:
\begin{equation}\label{eq:Sn}
    \widehat{\mathcal S}_n^{\bullet} \colon
0 \lra
\underbrace{
\bigwedge^{n} V_1(-n) \lra \bigwedge^{n-1} V_1 \otimes V(-n+1) 
\lra \cdots
}_{(2)}
\underbrace{\cdots\lra \vphantom{\bigwedge^n}
\Sym^{n}V \otimes \mathcal O_{\mathbb P^n}
\lra
\mathcal O_{\mathbb P^n}(nd) 
}_{(1)} \lra 0.
\end{equation}
We use a bound on the Castelnuovo--Mumford regularity of powers of linearly presented ideals developed in our earlier work \cite{yang} to prove the hypercohomology contribution from (1) vanishes. To prove that the contribution of (2) to hypercohomology also vanishes, we introduce a crucial new ingredient, namely we prove a global generation result for higher syzygy bundles, see Theorem~\ref{lem:gg}.
\subsection*{Stabilization of powers}
This seemed to be the end of the story, but Eisenbud, Huneke, and Ulrich realized that linear presentation is the first case of a much more general statement involving $p$-step linear syzygies. We record the numerical information from the minimal free resolution of $I$ using the \textbf{Betti table} $\beta(I)$ by setting the entry in column $i$ and row $j$ to be
\[\beta_{i,j}(I) \coloneqq \dim_\CC \Tor_i^S(I,\CC)_{i+j}.\]
For instance, $\beta_{0,d}(I)$ is the number of generators of $I$ and $\beta_{1,d}(I)$ is the number of linear syzygies of $I$. We say that the resolution of $I$ is \textbf{linear for $p$ steps} if the columns $0,\dots,p$ of $\beta(I)$ are concentrated in a single row. Concretely, its Betti table looks like
\[\begin{array}{c|cccccccccc}
     & 0 & \cdots & p & p+1 &\cdots \\\hline
    d & \ast & \cdots & \ast & \ast & \cdots\\
    d+1 & - & \cdots & - &\ast & \cdots\\
    \vdots &\vdots&\vdots&\vdots&\vdots&\ddots\\
\end{array}\]
where $-$ means that the entry is zero and $\ast$ means that the entry is possibly nonzero. This definition is closely related to the $N_p$-property \cite{GL} and the more general $N_{d,p}$-property \cite{EGHP}. Eisenbud, Huneke, and Ulrich conjectured in \cite{EHU}*{Conjecture~1.4} the following extension of the Eisenbud--Ulrich conjecture.
\begin{conjecture}[Eisenbud--Huneke--Ulrich]\label{conj:EHU}
    For $p \ge 1$, if $I$ is an $\m$-primary ideal such that its resolution is linear for $p$ steps, then
    \[I^t = \m^{td} \quad \text{for all }t \ge \left\lceil \frac{n}{p}\right\rceil.\]
\end{conjecture}
\noindent
Since the equality $I^t = \m^{td}$ is equivalent to $I^t$ having a linear resolution, the heuristic behind Conjecture~\ref{conj:EHU} is that the number of linear steps should behave multiplicatively when taking powers of $I$. However, this behavior does not hold for all individual powers. In fact, the square of a linearly presented ideal need not remain linearly presented \cite{EHU}*{Example~7.10}. Conjecture~\ref{conj:EHU} holds when $I$ is a monomial ideal \cite{EHU}*{Theorem~8.1} or when $p \ge (n-1)/2$ \cite{EHU}*{Corollary~7.7} \cite{yang}*{Theorem~1.2}. A general effective bound for all values of $p$ was not known until our recent work \cite{yang}. The technique used to prove Theorem~\ref{mainThm:A} naturally extends to this setting. We extend the range in which the Eisenbud--Huneke--Ulrich conjecture holds to $n\le 5$ and $n = 7$. Set 
\[s(n,p) = \begin{cases}
        2 \left\lfloor \frac{n}{p+1} \right\rfloor + 1 &\text{if }n \equiv p \mod (p+1),\vspace{0.05in}\\
        2 \left\lfloor \frac{n}{p+1} \right\rfloor &\text{otherwise}.
    \end{cases}\]
\begin{thmx}[Corollary~\ref{cor:thmA}]\label{mainthm:B}
    For $p \ge 1$, if $I$ is an $\m$-primary ideal such that its resolution is linear for $p$ steps, then
    \[I^t = \m^{td} \quad \text{for all }t \ge s(n,p).\]
\end{thmx}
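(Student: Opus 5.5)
We plan to reduce the equality $I^t=\m^{td}$ to the vanishing of a hypercohomology group, following the outline given for Theorem~\ref{mainThm:A}. Let $0\to F_p\to\cdots\to F_1\to F_0\to I\to 0$ be the first $p+1$ terms of the minimal resolution, which are linear by hypothesis, with $F_i=V_i\otimes S(-d-i)$. Sheafify on $\PP^n$ and twist by $d$. Since $I$ is $\m$-primary, the sheafified map $V\otimes\mc O\to\mc O(d)$ is surjective, so its kernel $M$ is a vector bundle. The truncated resolution then gives an exact sequence $0\to E\to V_p\otimes\mc O(-p)\to\cdots\to V\otimes\mc O\to\mc O(d)\to0$, in which $E$ is the kernel of the $p$-th step, a higher syzygy bundle.

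The surjectivity of $\Sym^tV\to H^0(\mc O(td))$ is controlled by the Koszul-type complex $\mc S^\bullet_t$ built from $\Sym^t V\otimes\mc O\to\mc O(td)$. Its kernel has a filtration by terms $\Sym^{t-k}V\otimes\bigwedge^k M$, and $\bigwedge^k M$ in turn admits resolutions by the linear syzygy modules, i.e. terms of the form $\bigwedge^{a}V_1\otimes\cdots(-a)$. Concretely, we form the analogue $\widehat{\mc S}^\bullet_t$ of (\ref{eq:Sn}), using the $p$-step linear strand. This is exact where the strand is exact. The goal becomes $\mathbb H^1(\widehat{\mc S}^\bullet_t)=0$, which we check via the hypercohomology spectral sequence. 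It suffices to have $H^{j}$ of the $j$-th term from the right vanish for $j\ge1$ in the relevant range, together with control of the leftover terms beyond the linear range.

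As in the outline, we split the complex into two parts. \textbf{Part (1)} consists of the terms near $\Sym^tV\otimes\mc O\to\mc O(td)$. Here we invoke the regularity bound for powers of ideals with $p$-step linear resolutions from \cite{yang}. That bound says roughly that the terms $\Sym^{t-k}V\otimes(\cdots)$ are cohomologically trivial as long as $k\lesssim (p+1)\cdot(\text{number of steps})$. The $p+1$ appears because each linear block of length $p$ plus one step of the Koszul filtration advances cohomological degree by $p+1$. \textbf{Part (2)} consists of the far-left terms, which involve $E$ and its wedge/symmetric powers twisted negatively. The needed vanishing there is $H^{n}$ or $H^{n-1}$ vanishing. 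By Serre duality this becomes a statement about sections of the dual, and it follows from global generation of $E^\vee$ (or of $\bigwedge^k E^\vee$) after a suitable twist, via Theorem~\ref{lem:gg}.

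The main obstacle is the bookkeeping that matches the two ranges. Part (1) handles cohomological degrees up to about $(p+1)\lfloor n/(p+1)\rfloor$ using roughly $\lfloor n/(p+1)\rfloor$ powers. Part (2), via global generation, handles the top degrees near $n$ using another $\lfloor n/(p+1)\rfloor$ powers. This is what produces the count $2\lfloor n/(p+1)\rfloor$. When $n\equiv p\pmod{p+1}$, there is a single leftover cohomological degree that neither part covers, and it costs one additional power, which gives the $+1$ in $s(n,p)$. We would first verify this count for $p=1$. There $s(n,1)$ roughly equals $n$, recovering Theorem~\ref{mainThm:A}. We would then carry out the general indexing carefully.
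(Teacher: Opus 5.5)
Your outline has the right overall shape (reduce to $\mathbb H^1(\widehat{\mc S}^\bullet_t)=0$, handle low powers with the regularity bound of \cite{yang}, handle high powers with Theorem~\ref{lem:gg}). However, Part (2), which carries essentially all the content, is misdescribed and would not work as stated.

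The terms of $\widehat{\mc S}^\bullet_t$ are sums of line bundles. The bundle $E_p$ enters only through the cohomology sheaves $\mc H^{-rp}(\widehat{\mc S}^\bullet_t)=\mc P^r_p(E_p)((t-r)d)$. Set $\mc B^\bullet=\ker(\mc A^\bullet\lra\mc O_{\PP^n}(d))$ and use the exact sequences $0\lra\Sym^r(\mc B^\bullet)\lra\widehat{\mc S}^\bullet_r\lra\widehat{\mc S}^\bullet_{r-1}\otimes\mc O_{\PP^n}(d)\lra 0$. What must then be shown is $H^{rp+1}(\mc P^r_p(E_p)((t-r)d))=0$ for all $k<r\le t$, where $k=s(n,p)-\lfloor n/(p+1)\rfloor$.

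When $rp+1\le n$, these are intermediate cohomology groups, not $H^n$ or $H^{n-1}$. For example, $n=10$, $p=1$, $r=6$ requires $H^7(\bigwedge^6E_1((t-6)d))=0$. Moreover, $rp+1$ is precisely the degree that exactness of the linear strand does not give, since Lemma~\ref{lem:vanishing1} only covers $rp+2\le l\le n-1$. Serre duality cannot reach these groups.

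The positivity input is also misidentified. Theorem~\ref{lem:gg} says $E_p(d+p)$ is globally generated, not $E_p^\vee$. Global generation of $E_p^\vee(-p)$ is automatic from $E_p\subseteq V_p\otimes\mc O_{\PP^n}(-p)$. It gives no vanishing of sections of negative twists of $E_p^\vee$; that would require global generation of $E_p$ itself.

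The missing idea is how global generation of $E_p(d+p)$ is turned into vanishing of $H^{rp+1}$. The paper proceeds as follows.
\begin{itemize}
\item Take the presentation $0\lra G\lra H^0(E_p(d+p))\otimes\mc O_{\PP^n}(-d-p)\lra E_p\lra 0$ and apply $\mc P^r_{p+1}$ via Lemma~\ref{lem:lesSym}. This gives an exact complex ending in $\mc P^r_p(E_p)((t-r)d)$, with middle terms $\mc P^{r'}_{p+1}(H^0(E_p(d+p)))\otimes\mc P^{r-r'}_p(E_p)((t-r)d-r'(d+p))$.
\item By Lemma~\ref{lem:hypercohomology1}, the group $H^{rp+1}$ is traded for $H^{rp+r'}$ of these terms.
\item The $G$-term and the $r'=r$ term die by Grothendieck vanishing, because $r(p+1)\ge n+1$.
\item Lemma~\ref{lem:vanishing1} together with Grothendieck vanishing kills every other term except $r'=n-rp$.
\item The surviving $H^n$ term vanishes by Lemma~\ref{lem:vanishing2'}, thanks to the inequalities of Lemma~\ref{lem:inequal}.
\end{itemize}

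Those inequalities produce $s(n,p)$, including the $+1$ when $n\equiv p\pmod{p+1}$. They combine with $\reg(I^k)\le(\lfloor n/(p+1)\rfloor+k)d=s(n,p)d$ from Lemma~\ref{lem:regofpowers}. Since $\mathbb H^1(\widehat{\mc S}^\bullet_t)$ injects into $\mathbb H^1(\widehat{\mc S}^\bullet_k\otimes\mc O_{\PP^n}((t-k)d))=(S/I^k)_{td}$, this last group is $0$.

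Your count (``another $\lfloor n/(p+1)\rfloor$ powers'' for Part (2), ``one leftover degree costs one power'') does not match this mechanism. Part (2) must handle every $r$ from $k+1$ up to $t$. The second $\lfloor n/(p+1)\rfloor$ comes from the extra twist in the regularity bound.

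Once you invoke Theorem~\ref{lem:gg}, the statement follows immediately from Theorem~\ref{thm:powers}; that is exactly the paper's proof.
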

\noindent The following tables compare the bounds predicted by Conjecture~\ref{conj:EHU} with those obtained in Theorem~\ref{mainthm:B}. Entries that differ from the conjectured values are circled and shown in red.
\begin{align*}
    & n = 4 \quad \begin{array}{c|cccccccc}
        p & 1 & 2 & 3 & 4 \\\hline
        \text{expected} & 4 & 2 & 2 & 1\\
        \text{Theorem B} & 4 & 2 & 2 & 1
    \end{array}
    && n = 5 \quad \begin{array}{c|cccccccc}
        p & 1 & 2 & 3 & 4 & 5 \\\hline
        \text{expected} & 5 & 3 & 2 & 2 & 1\\
        \text{Theorem B} & 5 & 3 & 2 & 2 & 1
    \end{array}\\
    & n = 6 \quad \begin{array}{c|cccccccc}
        p & 1 & 2 & 3 & 4 & 5 & 6 \\ \hline
        \text{expected} & 6 & 3 &2 & 2 & 2 & 1\\
        \text{Theorem B} & 6 & {\color{red} \textcircled{4}} &2 & 2 & 2 & 1
    \end{array}
    && n = 7 \quad \begin{array}{c|cccccccc}
        p & 1 & 2 & 3 & 4 & 5 & 6 & 7\\\hline
        \text{expected} & 7 & 4 & 3 & 2 & 2 & 2 & 1\\
        \text{Theorem B} & 7 & 4 & 3 & 2 & 2 & 2 & 1
    \end{array}\\
    & n = 8 \quad \begin{array}{c|cccccccc}
        p & 1 & 2 & 3 & 4 & 5 & \cdots\\\hline
        \text{expected} & 8 & 4 & 3 & 2 & 2 & \cdots\\
        \text{Theorem B} & 8 & {\color{red}\textcircled{5}} & {\color{red}\textcircled{4}} & 2 & 2 & \cdots
    \end{array}
    && n = 9 \quad \begin{array}{c|cccccccc}
        p & 1 & 2 & 3 & 4 & 5 & 6 & \cdots\\\hline
        \text{expected} & 9 & 5 & 3 & 3 & 2 & 2 & \cdots\\
        \text{Theorem B} & 9 & {\color{red}\textcircled{6}} & {\color{red}\textcircled{4}} & 3 & 2 & 2 & \cdots
    \end{array}\\
    & n = 10 \,\,\, \begin{array}{c|cccccccc}
        p & 1 & 2 & 3 & 4 & 5 & \cdots\\\hline
        \text{expected} & 10 & 5 & 4 & 3 & 2 & \cdots\\
        \text{Theorem B} & 10 & {\color{red}\textcircled{6}} & 4 & {\color{red}\textcircled{4}} & 2 & \cdots
    \end{array}
    && n = 11 \,\,\, \begin{array}{c|cccccccc}
        p & 1 & 2 & 3 & 4 & 5 & 6 & \cdots\\\hline
        \text{expected} & 11 & 6 & 4 & 3 & 3  & 2 & \cdots\\
        \text{Theorem B} & 11 & {\color{red}\textcircled{7}} & {\color{red}\textcircled{5}} & {\color{red}\textcircled{4}} & 3 & 2 & \cdots
    \end{array}
\end{align*}
\noindent Before stating the next result, we recall some definitions. Consider the linear strand of the resolution of $I$
\begin{equation}\label{eq:linearStrand}
    \cdots \lra S(-d-p-1)^{\beta_{p+1,d}} \lra S(-d-p)^{\beta_{p,d}} \lra \cdots \lra S(-d)^{\beta_{0,d}} \lra I \lra 0.
\end{equation}
We regard this as a complex indexed cohomologically, where $I$ is placed at cohomological degree $1$. We interpret the statement that the resolution of $I$ is linear for $p$ steps to mean that (\ref{eq:linearStrand}) is exact through cohomological degree $-p + 1$. Generalizing this interpretation, we say that the resolution of $I$ is \textbf{virtually linear for $p$ steps} if the sheafification of (\ref{eq:linearStrand}) is exact through cohomological degree $-p+1$. This definition allows a larger class of ideals. Because sheafification is exact, every ideal whose resolution is linear for $p$ steps also has a resolution that is virtually linear for $p$ steps. We say that $I$ is \textbf{virtually linearly presented} if its resolution is virtually linear for one step. Set
\[V_i = \Tor_i^S(I,\CC)_{i+d} \quad \text{for }1 \le i \le p \quad \text{and} \quad V = \Tor_0^S(I,\CC)_d = I_d.\]
For every ideal $I$ whose resolution is virtually linear for $p$ steps, we obtain a complex
\begin{equation}\label{eq:complexA}
    \mc{A}^\bullet \colon 0 \lra V_p\otimes \mc{O}_{\PP^n}(-p) \overset{\partial_p}{\lra} \cdots \overset{\partial_2}{\lra}  V_1 \otimes \mc{O}_{\PP^n}(-1) \overset{\partial_1}{\lra} V \otimes \mc{O}_{\PP^n}(\overset{\partial_0}{\lra} \mc{O}_{\PP^n}(d)) \lra 0. 
\end{equation}
We define the \textbf{$i$-th higher syzygy bundle} to be $E_i = \ker(\partial_i)$. In particular, $E_0 = M_V$ is the classical syzygy bundle associated to the linear series $(V,\mc{O}_{\PP^n}(d))$, see \cite{L}. Our next result improves \cite{yang}*{Theorem~1.2} under a global generation hypothesis on the vector bundle $E_p$.
\begin{thmx}[Theorem~\ref{thm:powers}]
    For $p \ge 1$, if $I$ is $\m$-primary and its resolution is virtually linear for $p$ steps and $E_p(d+p)$ is globally generated, then
    \[I^t = \m^{td} \quad \text{for all }t \ge s(n,p).\]
\end{thmx}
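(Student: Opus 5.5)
We plan to reduce the equality $I^t=\m^{td}$ to a cohomological surjectivity statement and then prove it by splitting a Koszul-type complex into two pieces, as in the outline after Theorem~\ref{mainThm:A}.

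\textbf{Reduction.} Since $I$ is $\m$-primary, $\phi$ is a morphism. The equality $I^t=\m^{td}$ is equivalent to surjectivity of the multiplication map
\[
\Sym^t V\otimes \CC \lra H^0(\mc{O}_{\PP^n}(td)),
\]
that is, of $\Sym^t V \to S_{td}$. Once it holds for one $t$, it holds for all larger $t$, because $\m^{d}\cdot I^{t}\supseteq I^{t+1}=I\cdot \m^{td}$ gives $\m^{(t+1)d}\subseteq I^{t+1}$ when $I^t=\m^{td}$. So it suffices to treat $t=s=s(n,p)$.

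\textbf{The complex.} Form the complex $\widehat{\mc{S}}^\bullet_s$ obtained by taking the degree-$s$ part of the symmetric algebra of the two-term (or $(p+1)$-term) complex $\mc{A}^\bullet$. This is the "Koszul" complex whose terms are $\bigwedge^{a}(\cdots)\otimes \Sym^{b}V(\cdots)$, built from $V_1,\dots,V_p$ and $V$, and it ends in $\Sym^sV\otimes\mc{O}\to\mc{O}(sd)$.

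Virtual linearity for $p$ steps means $\mc{A}^\bullet$ is exact as a sheaf complex in the relevant range, with kernel $E_p$ at the left end. Because $I$ is $\m$-primary, $\partial_0$ is surjective. Hence the symmetric power complex is exact away from the far left, where it is resolved by sheaves built from $E_p$ and the truncated pieces. Standard hypercohomology spectral sequences then reduce surjectivity of $H^0$ at the right end to vanishing of $\mathbb{H}^1$ of the complex augmented by these kernels.

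\textbf{Splitting.} We split the complex as in (\ref{eq:Sn}) into two parts.

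\emph{Part (1).} This is the portion with large symmetric powers of $V$: the terms $\Sym^{s-j}V\otimes(\text{wedge/sym of }V_{\ge1})(-j)$ with $j$ small. Its contribution to $\mathbb{H}^1$ involves $H^{i}$ of twists of kernel sheaves along $i$ steps. We kill these by invoking the regularity bound for powers from \cite{yang}, which in this generality is stated for virtually linear resolutions. That bound says $I^{k}$ is $(kd+\text{small})$-regular, so the relevant $H^i(\mc{O}(\ast))$-type terms vanish for $i\ge1$ in the range where the cohomological shift is at most about $k(p+1)$. This is where the floor $\lfloor n/(p+1)\rfloor$ enters: each block of $p+1$ linear steps costs one unit of $t$ on the right side.

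\emph{Part (2).} This is the far-left portion, where the terms involve only $V_1,\dots,V_p$ and eventually $E_p$. We need the vanishing of $H^{j}(E_p\otimes(\ldots)(-j'))$ for the appropriate indices. Here the hypothesis that $E_p(d+p)$ is globally generated is used. It gives a surjection $W\otimes\mc{O}\to E_p(d+p)$, whose kernel has controlled cohomology. Combined with Bott vanishing on $\PP^n$ ($H^i(\mc{O}(k))=0$ for $0<i<n$), this shows the needed $H^{\text{top}}$ terms vanish. The only nonzero contribution would be $H^n$ of sufficiently negative twists, and these are excluded once $s$ is at least the second block count. This explains the doubling $2\lfloor n/(p+1)\rfloor$, with the extra $+1$ when $n\equiv p \pmod{p+1}$ because the last block then exactly hits the $H^n$ boundary.

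\textbf{Main obstacle.} The delicate step is the index bookkeeping that matches the cut point between parts (1) and (2). The cut must be placed so that part (1) stays within the range of the regularity bound from \cite{yang}, while part (2) stays within the range controlled by global generation plus vanishing on $\PP^n$. Verifying that both ranges meet exactly at $s(n,p)$ is where I expect the main work. I would first try placing the cut at cohomological degree $-\lfloor n/(p+1)\rfloor(p+1)$ and check the boundary case $n\equiv p$ separately.
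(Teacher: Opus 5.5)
Your proposal restates the outline from the introduction but does not supply the mechanism that turns it into a proof, and one structural claim in it is wrong. The symmetric power complex is \emph{not} exact away from the far left. By (\ref{eq:cohomologySheavesSym}), $\mc{S}^\bullet_t=\Sym^t(\mc{A}^\bullet)$ has cohomology $\mc{P}^r_p(E_p)\otimes\mc{O}_{\PP^n}((t-r)d)$ in every degree $-rp$, $0\le r\le t$. So the $E_2$-terms of total degree $1$ in the second spectral sequence for $\widehat{\mc{S}}^\bullet_t$ are $H^{rp+1}(\mc{P}^r_p(E_p)((t-r)d))$ for all $1\le r\le t$. Global generation of $E_p(d+p)$ only lets you kill these for $r$ large, and the small-$r$ terms are never shown to vanish individually.

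The paper packages the small-$r$ terms using $\mc{B}^\bullet=\ker(\partial_0)$. Lemma~\ref{lem:lesSym} gives short exact sequences $0\to\Sym^r(\mc{B}^\bullet)\to\widehat{\mc{S}}^\bullet_r\to\widehat{\mc{S}}^\bullet_{r-1}\otimes\mc{O}_{\PP^n}(d)\to 0$ with $\mathbb{H}^1(\Sym^r(\mc{B}^\bullet)(j))=H^{rp+1}(\mc{P}^r_p(E_p)(j))$. Vanishing for $k<r\le t$ then yields a chain of injections $\mathbb{H}^1(\widehat{\mc{S}}^\bullet_t)\hookrightarrow\mathbb{H}^1(\widehat{\mc{S}}^\bullet_k\otimes\mc{O}_{\PP^n}((t-k)d))=(S/I^k)_{td}$. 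Only at this point does the regularity bound $\reg(I^k)\le(\lfloor n/(p+1)\rfloor+k)d$ of Lemma~\ref{lem:regofpowers} enter. Your proposed cut at cohomological degree $-\lfloor n/(p+1)\rfloor(p+1)$ is not this. A truncation of $\widehat{\mc{S}}^\bullet_t$ has no direct relation to $I^k$, and regularity of $I^k$ is not a statement about vanishing of $H^i$ of twisted line bundles (those vanish for $0<i<n$ anyway).

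Part (2) is likewise not an argument as written. The kernel $G$ of $H^0(E_p(d+p))\otimes\mc{O}_{\PP^n}(-d-p)\to E_p$ has no controlled cohomology. The actual point is to apply $\mc{P}^r_{p+1}$ to this sequence (Lemma~\ref{lem:lesSym}). This trades $H^{rp+1}(\mc{P}^r_p(E_p)((t-r)d))$ for cohomology in degrees $\ge rp+r$ of $\mc{P}^r_{p+1}(G)$ and of a twisted trivial bundle, and both vanish by Grothendieck vanishing once $r(p+1)\ge n+1$. The cost is the middle terms $H^{rp+r'}(\mc{P}^{r-r'}_p(E_p)((t-r)d-r'(d+p)))$. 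Killing those requires three further inputs:
\begin{enumerate}
\item the intermediate vanishing $H^l(\mc{P}^r_i(E_i)(j))=0$ for $ri+2\le l\le n-1$ (Lemma~\ref{lem:vanishing1}), an induction along the linear strand involving Schur powers of all the $E_i$, not just twists of $E_p$;
\item the top-degree vanishing of Lemma~\ref{lem:vanishing2'};
\item the arithmetic of Lemma~\ref{lem:inequal}.
\end{enumerate}
These constraints force $r\ge k+1$ with $k=s(n,p)-\lfloor n/(p+1)\rfloor$. Combined with $t\ge\lfloor n/(p+1)\rfloor+k$ from the regularity step, they produce exactly $s(n,p)$. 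Since you explicitly defer this bookkeeping as the ``main work,'' the proposal does not yet establish the bound.

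A minor point: your reduction to $t=s(n,p)$ is true, but the stated justification runs the wrong way. The correct reason is that $I\supseteq I^t=\m^{td}$ gives $I_{(t+1)d}=I_dS_{td}=S_{(t+1)d}$, hence $I^{t+1}=I\,\m^{td}=\m^{(t+1)d}$.
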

\begin{remark}
    Suppose the resolution of $I$ is linear for $p$ steps. If
    \begin{equation}\label{eq:subadd}
        \reg \Tor_{p+2}^S(S/I,\CC) \le \reg \Tor_{p+1}^S(S/I,\CC) + \reg \Tor_{1}^S(S/I,\CC) = 2d+p,
    \end{equation}
    then $E_p(d+p)$ is globally generated. However, the converse need not hold. In Theorem~\ref{lem:gg}, we show that $E_p(d+p)$ is globally generated without (\ref{eq:subadd}). The condition (\ref{eq:subadd}) is one instance of the subadditivity condition, see \cite{M-Subadd}, which requires that
    \[\reg \Tor_{a+b}^S(S/I,\CC) \le \reg \Tor_{a}^S(S/I,\CC) + \reg \Tor_{b}^S(S/I,\CC) \quad \text{for all }a,b.\]
    It would be interesting to determine whether $S/I$ satisfies the subadditivity condition.
\end{remark}
\subsection*{Bounding the number of generators}
Let $\mu(I) = \dim V$ denote the number of minimal generators of $I$. Assume that $I$ is $\m$-primary. The equality $I^t = \m^{td}$ naturally imposes lower bounds on $\mu(I)$, because
\begin{equation}\label{eq:lowerbound}
    \binom{t + \mu(I) - 1}{t} = \dim\left( \Sym^t V\right) \ge \dim\left( (I^t)_{td}\right) = \dim\left( (\m^{td})_{td}\right) = \binom{td + n}{td}.
\end{equation}
However, this bound does not interpolate between all values of $p$. For example, if the resolution of $I$ is virtually linear for $p$ steps for some $p \ge n/2$, then by \cite{yang}*{Theorem~1.2}, we have $I^2 = \m^{2d}$, and (\ref{eq:lowerbound}) gives the same bound on $\mu(I)$ for every $p \ge n/2$. We therefore seek a sharp lower bound on $\mu(I)$ that reflects the variation in $p$. This motivates the following conjecture. Set
\[N(n,p,d) \coloneqq \binom{p+d}{p} + (n-p)\binom{p+d-1}{p}.\]
\begin{conjecture}\label{conj:numgens}
    If $I$ is $\m$-primary and its resolution is virtually linear for $p$ steps, then
    \[\mu(I) \ge N(n,p,d).\]
\end{conjecture}
\noindent When $p = 0$, $I$ is an arbitrary $\m$-primary ideal generated in degree $d$. Since $I$ is $\m$-primary,
\[\mu(I) \ge n+1 = N(n,0,d).\]
When $p = n$, $I$ coincides with $\m^d$, and hence
\[\mu(I) = \mu(\m^d) = N(n,n,d).\]
When $p = n-1$, Eisenbud, Huneke, and Ulrich showed that if the resolution of $I$ is linear for $p$ steps, then
\[\mu(I) \ge \binom{n+d-1}{n-1} + \binom{n+d-2}{n-1} = N(n,n-1,d)\]
with equality if and only if $S/I$ is Gorenstein, see \cite{EHU}*{Proposition~11.1}. We prove Conjecture~\ref{conj:numgens} when either $p = 1$ or $d = 2$.
\begin{thmx}[Corollaries~\ref{cor:numgend2}\label{mainThm:numgens} and \ref{cor:numgenp1}]\label{thmC}
    For $p \ge 1$, if $I$ is $\m$-primary and its resolution is virtually linear for $p$ steps and either $p=1$ or $d=2$, then 
    \[\mu(I) \ge N(n,p,d).\]
    Moreover, this lower bound is sharp when $p = 1$.
\end{thmx}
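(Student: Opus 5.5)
We treat the case $p=1$ and the case $d=2$ separately; both rest on positivity of Chern classes of globally generated bundles. Write $\mu=\mu(I)=\dim V$ and $h$ for the hyperplane class on $\PP^n$. Since $I$ is $\m$-primary, the complex $\mc{A}^\bullet$ of (\ref{eq:complexA}) gives an exact sequence of bundles
\[0\lra E_0=M_V\lra V\otimes\mc{O}_{\PP^n}\lra\mc{O}_{\PP^n}(d)\lra0 .\]
Hence $\rk M_V=\mu-1$ and $c(M_V)=(1+dh)^{-1}$, so $c_k(M_V)=(-d)^kh^k$. Virtual linearity for $p$ steps says the sheafified linear strand is exact through cohomological degree $-p+1$. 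Therefore $E_{i-1}$ is the image of $\partial_i$ for $1\le i\le p$. In particular $E_{p-1}(p)$ is a quotient of $V_p\otimes\mc{O}_{\PP^n}$, so it is globally generated. All ranks and Chern classes of the $E_i$ are computed from the truncated exact complex in terms of $\mu$, $\dim V_1,\dots,\dim V_p$, and $d$.

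\emph{Case $p=1$.} Here $F:=M_V(1)$ is a globally generated bundle of rank $r=\mu-1$, and we must show $r\ge nd$. Twisting the Chern roots gives
\[c_k(F)=\sum_{j=0}^{k}\binom{r-j}{k-j}(-d)^j\,h^k,\]
so every Chern class of $F$ is an explicit polynomial in $r$ and $d$. For a globally generated bundle, all Schur polynomials in its Chern classes are nonnegative (Fulton--Lazarsfeld), and $c_k(F)=0$ for $k>r$. We also have the exact sequence
\[0\lra K\lra V_1\otimes\mc{O}_{\PP^n}\lra F\lra0,\]
in which $K^\vee$ is globally generated, so the Segre classes $s_k(F)=c_k(K)\cdot(\pm1)^k$ are controlled as well. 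The plan is to locate a single nonnegative class that is negative whenever $r<nd$. The most natural candidate is the top Segre class $s_n(F)$, equivalently $c_n$ of $K$ up to sign. Using $s(F)=c(F)^{-1}$, this class is a single binomial-type expression in $r$ and $d$; one then checks that it becomes negative, or that some $c_k(F)$ with $k>r$ is forced to be nonzero, exactly when $r<nd$. The case $n=1$ is the sanity check $r\ge d$. An alternative route is to restrict to a general line and induct on $n$ via hyperplane sections, but hyperplane sections do not preserve the $\m$-primary degree-$d$ setup cleanly, so I would try Chern positivity first.

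\emph{Case $d=2$.} We apply the same strategy to $E_{p-1}(p)$ for general $p$. The complication is that its Chern classes involve the unknown $\dim V_i$. I would eliminate them as follows. Restrict the exact complex to a general linear subspace $\Lambda\cong\PP^p$ and use $d=2$: quadrics then have explicitly computable Koszul-type behaviour. Comparing the result with the Koszul complex of $\m^2$ restricted to $\Lambda$ should show that the restriction of $V$ to $\Lambda$ (which gives $V\to H^0(\mc{O}_\Lambda(2))$) must be surjective, contributing $\binom{p+2}{2}$ generators. Each of the remaining $n-p$ directions should then contribute at least $\binom{p+1}{p}=p+1$ further generators, by a positivity argument on the quotient bundle. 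This accounts for $N(n,p,2)=\binom{p+2}{p}+(n-p)(p+1)$. I expect this step, namely controlling the intermediate $\dim V_i$ and obtaining the correction term $(n-p)\binom{p+d-1}{p}$, to be the main obstacle.

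\emph{Sharpness for $p=1$.} We need a linearly presented $\m$-primary ideal with exactly $nd+1$ generators. I would use the examples constructed later in the paper (Examples~\ref{ex:sharp>2} and \ref{ex:sharp=2}), or build one directly by taking a $1$-generic linear presentation matrix of size $(nd+1)\times nd$ whose maximal minors have degree $d$. One then checks $\m$-primarity by Hilbert--Burch-type degeneracy considerations.
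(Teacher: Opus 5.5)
\textbf{Case $p=1$.} Your argument here is sound, and it takes a different, more elementary route than the paper. You stopped right before the decisive computation, so here it is. From $0\to F\to V\otimes\mc{O}_{\PP^n}(1)\to\mc{O}_{\PP^n}(d+1)\to 0$ and $c(K)c(F)=1$ you get $c(K^\vee)=(1-(d+1)h)(1-h)^{-(r+1)}$. Hence
\[
c_n(K^\vee)=\left[\binom{r+n}{n}-(d+1)\binom{r+n-1}{n-1}\right]h^n=\binom{r+n-1}{n-1}\,\frac{r-nd}{n}\,h^n .
\]
Since $K^\vee$ is a quotient of $V_1^\vee\otimes\mc{O}_{\PP^n}$, it is globally generated, so $\deg c_n(K^\vee)\ge 0$. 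This forces $r\ge nd$, i.e.\ $\mu(I)\ge nd+1$.

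The paper argues differently. It proves the stronger statement $I^t=\m^{td}$ for $t\ge\max\{n,\lceil m/d\rceil\}$, with $m=\mu(I)-1$ (Theorem~\ref{thm:lownumgen}), via the self-duality of $\Sym^{k-m+1}(\mc{A}^\bullet)$ from \cite{RSWY}. It then compares $\dim\Sym^nV$ with $\dim S_{nd}$. Your route yields only the numerical bound, but it needs nothing beyond global generation of $M_V(1)$.

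\textbf{Case $d=2$: a genuine gap.} First, surjectivity of $V\to\Sym^2U^\vee$ is not a Koszul phenomenon of quadrics. It holds for every $d$ and every $p$-plane $\PP(U^\vee)$: $\iota_U^*\widehat{\mc{A}}^\bullet$ has a single cohomology sheaf $\iota_U^*E_p$, in degree $-p$, so by Lemma~\ref{lem:hypercohomology2} the cokernel is $\mathbb{H}^1=H^{p+1}(\PP^p,\iota_U^*E_p)=0$.

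More seriously, one general $\Lambda$ only gives $\mu(I)\ge\binom{p+2}{2}$. The missing term $(p+1)(n-p)$ is $\dim\Gr(p+1,W^\vee)$. It comes from using surjectivity for all $p$-planes simultaneously, as in Theorem~\ref{thm:geometricBound}:
\begin{itemize}
\item $V^\perp=\ker(\Sym^dW^\vee\to V^\vee)$ then meets the variety $Y_{p+1}$ of forms in at most $p+1$ variables only at $0$.
\item $\dim Y_{p+1}=\binom{p+d}{d}+(p+1)(n-p)$.
\item A projective dimension count therefore forces $\dim V\ge\dim Y_{p+1}$, which equals $N(n,p,2)$ when $d=2$.
\end{itemize}
Your ``positivity argument on the quotient bundle'' for the remaining directions is never specified. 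As you note yourself, the Chern classes of $E_{p-1}(p)$ involve the unknown $\dim V_i$. So nothing in the proposal produces the $(p+1)(n-p)$ term.

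\textbf{Sharpness is also not established.} Examples~\ref{ex:sharp>2} and~\ref{ex:sharp=2} show that the exponent $n$ in Theorem~\ref{mainThm:A} is optimal. They do not have $nd+1$ generators in general: $J_{2,3}$ has $9>7$ generators, and $I_n$ has $\binom{n+1}{2}+2>2n+1$ generators for $n\ge 3$.

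Your direct construction does not make sense as stated:
\begin{itemize}
\item The maximal minors of an $(nd+1)\times nd$ matrix of linear forms have degree $nd$, not $d$.
\item Hilbert--Burch concerns perfect ideals of codimension two, while an $\m$-primary ideal has codimension $n+1$.
\item For $n\ge 2$, an ideal with $nd+1$ generators cannot have only $nd$ linear syzygies. Otherwise $V_1\otimes\mc{O}_{\PP^n}\to M_V(1)$ would be an isomorphism, contradicting $c_1(M_V(1))=(n-1)d\neq 0$.
\end{itemize}
The paper's witness is the $\SL(U)$-equivariant ideal $I_{d,n}\subseteq\Sym(\Sym^nU)$ generated by $\Sym^{nd}U\subseteq\Sym^d(\Sym^nU)$. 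By \cite{BFL} it is virtually (though in general not genuinely) linearly presented, and it has exactly $nd+1$ generators. Since the statement only requires virtual linearity, this is the example you need.
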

\noindent
If $I$ is $\m$-primary and linearly presented, then Theorem~\ref{mainThm:A} gives $\mu(I)\ge nd+1 = N(n,1,d)$,
which implies Conjecture~\ref{conj:numgens} in this case. The virtual case is more subtle, and we use a result on the self-duality of complexes from \cite{RSWY}*{Theorem~3.2}, see Section~\ref{sec:vlp}.

Suppose $I$ is virtually linearly presented. One can show that the morphism $\partial_1$ from (\ref{eq:complexA}) can be represented by a matrix of linear forms of constant rank. Matrices of this form were studied in \cite{MRM}. Manivel and Mir\'o-Roig observed that this constant-rank condition imposes strong restrictions on the syzygy bundle $M_V = E_0$. In particular,
\begin{enumerate}
    \item $M_V(1)$ and $M_V^\vee$ are globally generated,
    \item\label{property2} For every line $L \subseteq \PP^n$, we have
    \[M_V|_L \simeq \mc{O}_L(-1)^{\oplus d} \oplus \mc{O}_{L}^{\dim(V) - 1 - d}.\]
\end{enumerate}
A cohomology calculation shows that (\ref{property2}) is equivalent to the statement that for every line $L \subseteq \PP^n$, the composition
\[L \lra \PP^n \overset{\phi_V}{\lra} \PP(V)\]
factors through the degree $d$ Veronese embedding of $L$. Following this idea, we prove that if the resolution of $I$ is virtually linear for $p$ steps, then for every $p$-dimensional linear subspace of $\PP^n$, the natural composition to $\PP(V)$ factors through its degree $d$ Veronese embedding, see Remark~\ref{rmk:linearspacemapVeronese}. A dimension count then proves a lower bound on the dimension of $V$.
\begin{thmx}[Theorem~\ref{thm:geometricBound}]\label{thm:D}
    If $I$ is $\m$-primary and its resolution is virtually linear for $p$ steps, then 
    \[\mu(I) \ge \binom{p+d}{d} + (n-p)(p+1).\]
\end{thmx}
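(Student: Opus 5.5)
The plan follows the outline sketched just before the statement. First, restrict the virtually linear complex $\mc{A}^\bullet$ to a $p$-dimensional linear subspace $\Lambda\simeq\PP^p\subseteq\PP^n$, and show that the composition $\Lambda\to\PP^n\overset{\phi_V}{\to}\PP(V)$ factors through the degree $d$ Veronese embedding of $\Lambda$. Then count dimensions over the family of such subspaces.

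For the factorization, consider the complex $\mc{A}^\bullet|_\Lambda$. Since it is a complex of vector bundles that is exact through cohomological degree $-p+1$, and since $\partial_0$ is surjective because $I$ is $\m$-primary, it stays exact after restriction. Hence it resolves $E_p|_\Lambda$ and gives a resolution
\[0\to E_p|_\Lambda\to V_p\otimes\mc O_\Lambda(-p)\to\cdots\to V_1\otimes\mc O_\Lambda(-1)\to V\otimes\mc O_\Lambda\to\mc O_\Lambda(d)\to 0.\]
Chasing cohomology on $\PP^p$, the terms $\mc O_\Lambda(-i)$ with $1\le i\le p$ have no cohomology at all. Therefore the restriction map $V\to H^0(\mc O_\Lambda(d))$ has cokernel and kernel controlled only by $H^{p}(E_p|_\Lambda)$ and nearby groups, and $H^{i}(E_p|_\Lambda(\cdot))$ is dimension-shifted from $H^0$ of the cokernel. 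The key claim is that $V\to H^0(\mc O_\Lambda(d))$ is \emph{surjective}. Equivalently, $H^1(M_V|_\Lambda)=0$, which is identified with $H^{p}(E_p|_\Lambda)$, up to $H^j(\mc O(-i))$ terms that vanish. I would prove this via Serre duality, $H^p(E_p|_\Lambda)\simeq H^0(E_p^\vee|_\Lambda(-p-1))^\vee$. I expect the vanishing of this $H^0$ to follow from the dual complex: $E_p^\vee$ is a quotient of $V_p^\vee\otimes\mc O(p)$ with a filtration whose pieces are twisted by $-p-1$, giving $H^0$ of bundles of the form $\mc O(-1),\dots$. This surjectivity is what the paper calls factoring through the Veronese. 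It is the main obstacle, because exactness of the complex on a lower-dimensional $\Lambda$ must give exactly the right range of cohomological vanishing, and the count of $p$ steps must match $\dim\Lambda=p$ exactly.

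Granting surjectivity for every $\Lambda$, do the dimension count. Fix one $\Lambda_0\simeq\PP^p$. Then $V\to H^0(\mc O_{\Lambda_0}(d))$ is surjective, so $\dim V\ge\binom{p+d}{d}+\dim K$, where $K=\ker$ consists of forms in $V$ vanishing on $\Lambda_0$. Choose coordinates with $\Lambda_0=\{x_{p+1}=\cdots=x_n=0\}$. For each $j>p$, consider the $(p+1)$-dimensional subspace $\Lambda_j$ spanned by $\Lambda_0$ and $e_j$. I would rather use the $p$-planes through a fixed hyperplane $H\subset\Lambda_0$ together with $e_j$. Applying surjectivity on such planes produces, modulo forms vanishing on $\Lambda_0$, elements of $K$ whose restriction to the $x_j$-direction has the form $x_j\cdot g$ with $g$ ranging over enough of $H^0(\mc O_{\Lambda_0}(d-1))$ to contribute at least $p+1$ independent elements, for instance $x_jx_k^{d-1}$ for $k=0,\dots,p$, obtained by choosing $p$-planes spanned by $e_j$ and the coordinate hyperplanes of $\Lambda_0$. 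Elements for different $j$ are independent since they are linear in distinct $x_j$ modulo $(x_{p+1},\dots,x_n)^2$. This gives $\dim K\ge(n-p)(p+1)$, which yields the stated bound $\mu(I)\ge\binom{p+d}{d}+(n-p)(p+1)$.

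Checking linear independence in this last step is routine bookkeeping: look at the images in $(x_{p+1},\dots,x_n)/(x_{p+1},\dots,x_n)^2$ in degree $d$. The real work is the surjectivity step.
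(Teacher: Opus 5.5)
There are genuine gaps in both of your key steps.

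\textbf{Surjectivity step.} You are off by one, and you aim at a false vanishing. Split $\mc{A}^\bullet|_\Lambda$ into $0\to E_i|_\Lambda\to V_i\otimes\mc{O}_\Lambda(-i)\to E_{i-1}|_\Lambda\to 0$ for $1\le i\le p$. Since $\mc{O}_\Lambda(-i)$ is acyclic on $\Lambda\simeq\PP^p$ for $1\le i\le p$, this gives $H^q(E_0|_\Lambda)\simeq H^{q+p}(E_p|_\Lambda)$ for all $q$. Hence $\coker\bigl(V\to H^0(\mc{O}_\Lambda(d))\bigr)=H^1(M_V|_\Lambda)\simeq H^{p+1}(E_p|_\Lambda)$. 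This vanishes by Grothendieck vanishing, since $\dim\Lambda=p$. That is the paper's one-line argument (Lemma~\ref{lem:hypercohomology2} applied to $\iota_U^\ast\widehat{\mc{A}}^\bullet$), so this step is not the obstacle. The group you target, $H^p(E_p|_\Lambda)\simeq H^0(M_V|_\Lambda)=K$, is the kernel, not the cokernel. It is nonzero for $p<n$, as your own count requires $\dim K\ge(n-p)(p+1)$. Correspondingly $H^0(E_p^\vee|_\Lambda(-p-1))\simeq K^\vee\neq 0$, so the Serre-duality vanishing you expect is false.

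\textbf{Dimension count.} This is the real gap. Since $\dim K=\dim V-\binom{p+d}{d}$ for every $\Lambda_0$, the inequality $\dim K\ge(n-p)(p+1)$ is the entire theorem. Your mechanism for it is that the images of $K$ in $(x_{p+1},\dots,x_n)/(x_{p+1},\dots,x_n)^2$ contain $x_jx_k^{d-1}$ for all $j>p$ and $0\le k\le p$. This is false in general. Take $n=2$, $p=1$, $d=2$ and $I=(x_0x_1,x_0x_2,x_1^2,x_2^2,x_1x_2-x_0^2)$, the apolar ideal of the nondegenerate quadric $y_0^2+2y_1y_2$. It is $\m$-primary and Gorenstein with resolution $0\to S(-5)\to S(-3)^5\to S(-2)^5\to S$, hence linearly presented. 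For $\Lambda_0=\{x_2=0\}$ one gets $K=\langle x_0x_2,\,x_2^2\rangle$. The bound holds (with equality), but the normal images of $K$ span only $\CC\,x_0x_2$: no element of $K$ has normal part $x_1x_2$.

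Your construction breaks at the correction step. Any element of $V$ restricting to $x_1x_2$ on the line $\{x_0=0\}$ involves $x_1x_2-x_0^2$, and forcing it to vanish on $\Lambda_0$ cancels the $x_1x_2$ term. In general such corrections add uncontrolled normal components in every direction $x_{j'}$, so the independence is not routine bookkeeping.

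\textbf{What is missing.} You need to use surjectivity on all $p$-planes simultaneously. Dually, it says that $V^\perp=\ker(\Sym^dW^\vee\to V^\vee)$ meets the cone $Y_{p+1}=\bigcup_U\Sym^dU$ only in $0$. Since $\dim Y_{p+1}=\binom{p+d}{p}+(p+1)(n-p)$, the dimension theorem for cones gives $\dim V\ge\dim Y_{p+1}$; this is the paper's proof. A first-order argument can work only at a general point $F\in Y_{p+1}$, via generic smoothness of the finite projection $Y_{p+1}\to V^\vee$. There the normal components are paired against the span of the first partials of $F$, not against fixed monomials.
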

\noindent In particular, when $d = 2$, the lower bound coincides with the conjectured bound $N(n,p,2)$ from Conjecture~\ref{conj:numgens}. Eisenbud, Huneke, and Ulrich proved Theorem~\ref{thm:D} when the resolution of $I$ is linear for $p$ steps \cite{EHU}*{Proposition~7.11, Corollary~5.2}. We generalize their result to the virtual setting. We conclude the introduction with a motivating example from representation theory which highlights the sharpness of Theorem~\ref{thmC}  when $p = 1$.
\begin{example}
    Let $U$ be a $2$-dimensional $\CC$-vector space. We may identify the polynomial ring $S$ with the symmetric algebra $\Sym(\Sym^n U)$. For every $d \ge 2$, we consider the ideal
    \[I_{d,n} \coloneqq \text{ the ideal generated by the }\SL(U)\text{-subrepresentation }\Sym^{nd} U \subseteq \Sym^d(\Sym^n U) = S_d.\]
    The ideal $I_{d,n}$ arises naturally in the study of binary forms, see \cite{RSWY} and \cite{weyman-mult}. It follows from \cite{BFL}*{Theorem~B} that $I_{d,n}$ is virtually linearly presented, but it is not linearly presented in general, see \cite{RSWY}*{Theorem~1.9}. This gives an optimal example for Theorem~\ref{mainThm:numgens} in the case of $p = 1$ because 
    \[\mu(I_{d,n}) = \dim(\Sym^{nd} U) = nd+1 = N(n,1,d).\]
\end{example}

\subsection*{Acknowledgment}
I would like to thank Claudiu Raicu for his guidance and support throughout this project. Computations with Macaulay2 \cite{GS} provided many valuable insights. I thank David Eisenbud and Jason McCullough for helpful discussions. The author acknowledges support from the Simons Dissertation Fellowship SFI-MPS-SDF-00023235, the Arthur J. Schmitt Fellowship, and the National Science Foundation Grant DMS-2302341.

\subsection*{AI disclosure} GPT-5.6 Sol Pro was used to perform literature searches and revise the manuscript written by the author.

\section{Preliminaries}\label{sec:prelim}
\subsection{Notation and Conventions}
Throughout the paper, $S = \CC[x_0,x_1,\dots,x_n] = \Sym(W)$ is the homogeneous coordinate ring of $\PP^n = \PP(W)$, $\m = (x_0,x_1,\dots, x_n)$ is the homogeneous maximal ideal, and $I$ is an $\m$-primary homogeneous ideal generated in degree $d$, with $d \ge 2$. When we say that the resolution of $I$ is (virtually) linear for $p$ steps, we always take $1 \le p \le n$. For a coherent sheaf $\mc{F}$ on $\PP^n$, we denote its $i$-th sheaf cohomology group by $H^i(\mc{F}) = H^i(\PP^n,\mc{F})$. All complexes will be indexed cohomologically. For a complex $\mc{C}^\bullet$ of coherent sheaves on $\PP^n$, the $i$-th cohomology sheaf is denoted by $\mc{H}^i(\mc{C}^\bullet)$ and the $i$-th hypercohomology is denoted by $\mathbb{H}^i(\mc{C}^\bullet)$.
\subsection{Complexes and Hypercohomology}
Given a complex
\[\mc{C}^\bullet \colon \cdots \lra \mc{C}^{-2} \lra \mc{C}^{-1} \lra \mc{C}^{0} \lra 0\]
of coherent sheaves on $\PP^n$, there are two spectral sequences $\prescript{I}{}E,\prescript{II}{}E$ with differentials
\[\prescript{I}{}d_i \colon \prescript{I}{}E_i^{p,q} \lra \prescript{I}{}E_i^{p+i,q-i+1},\quad \prescript{II}{}d_i \colon \prescript{II}{}E_i^{p,q} \lra \prescript{II}{}E_i^{p-i+1,q+i}\]
both converging to the hypercohomology of $\mc{C}^\bullet$, with $\prescript{I}{}E_1$ and $\prescript{II}{}E_2$ pages given by
\[\prescript{I}{}E^{p,q}_1 = H^q(\mc{C}^p), \quad \prescript{II}{}E^{p,q}_2 = H^q(\mc{H}^p(\mc{C}^\bullet)) \quad \implies \mathbb{H}^{p+q}(\mc{C}^\bullet).\]
For details on spectral sequences, we refer the reader to \cite{Weibel}*{Section~5.7}. Define
\[\theta \colon \mc{C}^0 \lra \mc{H}^0(\mc{C}^\bullet)\]
to be the natural surjective morphism. This induces a map $\Theta \colon \mc{C}^\bullet \lra \mc{H}^0(\mc{C}^\bullet)$ of complexes where $\mc{H}^0(\mc{C}^\bullet)$ is viewed as a complex concentrated in cohomological degree $0$. We define the \textbf{augmented complex} of $\mc{C}^\bullet$ to be the shifted mapping cone
\[\widehat{\mc{C}}^\bullet \coloneqq \operatorname{cone}(\Theta)[-1].\]
Concretely, we have
\[\widehat{\mc{C}}^\bullet \colon \cdots \lra \mc{C}^{-2} \lra \mc{C}^{-1} \lra \mc{C}^{0} \overset{\theta}{\lra } \mc{H}^0(\mc{C}^\bullet) \lra 0.\]
We denote by $H^0(\theta) \colon H^0(\mc{C}^0) \lra H^0(\mc{H}^0(\mc{C}^\bullet))$ the map on global sections induced by $\theta$.
\begin{lemma}\label{lem:hypercohomology2}
    If $H^i\left(\widehat{\mc{C}}^{-i}\right) = 0$ for all $i \ge 1$ and $H^{i+1}\left(\widehat{\mc{C}}^{-i}\right)  = 0$ for all $i \ge 0$, then
    \[\coker(H^0(\theta)) = \mathbb{H}^1\left( \widehat{\mc{C}}^\bullet\right).\]
    In particular, these hypotheses hold when $\mc{C}^{-i} = V_i\otimes \mc{O}_{\PP^n}(-i + j)$ for some vector spaces $V_i$ and a non-negative integer $j$.
\end{lemma}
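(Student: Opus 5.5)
We run the first spectral sequence $\prescript{I}{}E_1^{p,q} = H^q(\widehat{\mc{C}}^p) \Rightarrow \mathbb{H}^{p+q}(\widehat{\mc{C}}^\bullet)$ and read off total degree $1$. In the augmented complex, $\widehat{\mc{C}}^{p} = \mc{C}^{p}$ for $p \le 0$ and $\widehat{\mc{C}}^{1} = \mc{H}^0(\mc{C}^\bullet)$. The terms of total degree $1$ are therefore $E_1^{1,0} = H^0(\mc{H}^0(\mc{C}^\bullet))$ together with $E_1^{-i,i+1} = H^{i+1}(\widehat{\mc{C}}^{-i})$ for $i \ge 0$. The second hypothesis says that all the latter vanish, so $E_1^{1,0}$ is the only possible contribution to $\mathbb{H}^1$.

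We next determine $E_\infty^{1,0}$. On the $E_1$ page the differential into position $(1,0)$ is $d_1 = H^0(\theta)\colon E_1^{0,0} = H^0(\mc{C}^0) \to E_1^{1,0}$, so $E_2^{1,0} = \coker(H^0(\theta))$. Nothing leaves $(1,0)$, since $\widehat{\mc{C}}^{p} = 0$ for $p \ge 2$. For $r \ge 2$ the differential $d_r$ into $(1,0)$ starts at $E_r^{1-r,\, r-1}$, which is a subquotient of $H^{r-1}(\widehat{\mc{C}}^{-(r-1)})$. This group vanishes by the first hypothesis with $i = r-1 \ge 1$. Hence $E_\infty^{1,0} = \coker(H^0(\theta))$. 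Because all other total-degree-$1$ terms are zero, the abutment filtration on $\mathbb{H}^1$ has a single nonzero graded piece, and we conclude $\mathbb{H}^1(\widehat{\mc{C}}^\bullet) = \coker(H^0(\theta))$.

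The only point needing care is convergence, since the complex may be unbounded to the left. Every term is a coherent sheaf on $\PP^n$, so $H^q = 0$ for $q > n$. Each total degree therefore involves only finitely many nonzero $E_1$ terms, and the spectral sequence converges. Strictly speaking we only use the truncation in total degrees $0$ and $1$.

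For the special case $\mc{C}^{-i} = V_i \otimes \mc{O}_{\PP^n}(-i+j)$ with $j \ge 0$, we apply Bott's formula for the cohomology of line bundles on $\PP^n$. The first hypothesis concerns $H^i(\mc{O}_{\PP^n}(-i+j))$ with $i \ge 1$. For $1 \le i \le n-1$ intermediate cohomology of line bundles vanishes. For $i = n$ we need $H^n(\mc{O}(-n+j)) = 0$, which holds because $-n+j \ge -n > -n-1$. For $i > n$ the group vanishes for dimensional reasons. The second hypothesis concerns $H^{i+1}(\mc{O}(-i+j))$ with $i \ge 0$. This vanishes for $1 \le i+1 \le n-1$, and for $i+1 = n$ because $-(n-1)+j > -n-1$; beyond that it vanishes by dimension. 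So both hypotheses hold.

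I expect no real obstacle; the proof is bookkeeping with the spectral sequence. The main care is in the indexing of $d_r$ and in checking that the sources of the differentials into $(1,0)$ are exactly the groups killed by the first hypothesis.
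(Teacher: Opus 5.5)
Your proof is correct and is essentially the argument behind this lemma. The paper only defers to the author's earlier Lemma~2.1, but its two hypotheses are exactly the vanishings you use in the first hypercohomology spectral sequence, in the same style as the paper's proof of Lemma~\ref{lem:hypercohomology1}: the second hypothesis kills the other total-degree-one terms $E_1^{-i,i+1}$, and the first kills the sources $E_r^{1-r,r-1}$ of every $d_r$ ($r\ge 2$) into $E_r^{1,0}$, while the special case is the standard line-bundle cohomology computation on $\PP^n$ that you give.
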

\begin{proof}
    See \cite{yang}*{Lemma~2.1}.
\end{proof}
\begin{lemma}\label{lem:hypercohomology1}
    Suppose $\mc{C}^\bullet$ is exact. If
    \begin{enumerate}
        \item $H^{l+i-p+1}(\mc{C}^{-i}) = 0 \text{ for all } -l+p-1\le  i\le p-1,$
        \item $H^{l+i-p-1}(\mc{C}^{-i}) = 0 \text{ for all } p+1 \le i \le n-l+p+1,$
    \end{enumerate}
    then $H^l(\mc{C}^{-p}) = 0$.
\end{lemma}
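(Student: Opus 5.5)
The plan is to break the exact complex $\mc{C}^\bullet$ into short exact sequences and chase long exact cohomology sequences in two directions from the term $\mc{C}^{-p}$. For $i \ge 1$, set $\mc{B}_i \coloneqq \operatorname{im}(\mc{C}^{-i} \to \mc{C}^{-i+1})$. Since $\mc{C}^\bullet$ is exact and ends at $\mc{C}^0$, we get $\mc{B}_1 = \mc{C}^0$. Exactness also gives short exact sequences
\[0 \lra \mc{B}_{i+1} \lra \mc{C}^{-i} \lra \mc{B}_i \lra 0 \qquad (i \ge 1).\]
Applying this with $i = p$, it suffices to prove that $H^l(\mc{B}_p) = 0$ and $H^l(\mc{B}_{p+1}) = 0$. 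Then $H^l(\mc{C}^{-p})$ is squeezed between two zero groups in the long exact sequence.

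\emph{Right side (condition (1)).} I will show by descending induction that $H^{l-k}(\mc{B}_{p-k}) = 0$ for $k = 0,1,\dots$. The sequence $0 \to \mc{B}_{p-k} \to \mc{C}^{-(p-k-1)} \to \mc{B}_{p-k-1} \to 0$ gives an exact piece
\[H^{l-k-1}(\mc{B}_{p-k-1}) \lra H^{l-k}(\mc{B}_{p-k}) \lra H^{l-k}(\mc{C}^{-(p-k-1)}).\]
The right-hand group is $H^{l+i-p+1}(\mc{C}^{-i})$ with $i = p-k-1$, so it vanishes by (1). The recursion terminates in one of two ways:
\begin{itemize}
\item If $l \ge p$, it reaches $\mc{B}_1 = \mc{C}^0$, and we need $H^{l-p+1}(\mc{C}^0)=0$. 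This is exactly (1) at $i=0$.
\item If $l < p$, the cohomological degree reaches $0$ first. There we use the injection $H^0(\mc{B}_{p-l}) \hookrightarrow H^0(\mc{C}^{-(p-l-1)})$, and the target vanishes by (1) at the extreme index $i = -l+p-1$.
\end{itemize}
This accounts for the lower endpoint of the range in (1).

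\emph{Left side (condition (2)).} I will show by ascending induction that $H^{l+k}(\mc{B}_{p+1+k}) = 0$ for $k \ge 0$. The sequence $0 \to \mc{B}_{p+2+k} \to \mc{C}^{-(p+1+k)} \to \mc{B}_{p+1+k} \to 0$ gives
\[H^{l+k}(\mc{C}^{-(p+1+k)}) \lra H^{l+k}(\mc{B}_{p+1+k}) \lra H^{l+k+1}(\mc{B}_{p+2+k}).\]
The left-hand group is $H^{l+i-p-1}(\mc{C}^{-i})$ with $i = p+1+k$, so it vanishes by (2). We iterate until the cohomological degree exceeds $n$: once $l+k+1 = n+1$, the right-hand group $H^{n+1}$ vanishes on $\PP^n$ by Grothendieck vanishing. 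The last index used is $i = n-l+p+1$, which matches the upper endpoint in (2). If $\mc{C}^\bullet$ is bounded, the recursion may instead end earlier at a zero $\mc{B}$, which is harmless.

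The argument is purely formal. The only step needing care is the index bookkeeping at the two endpoints, namely that the ranges in (1) and (2) are exactly those consumed by the two inductions. I expect the $l<p$ case of the right side, where the degree-$0$ termination rather than $\mc{B}_1 = \mc{C}^0$ is used, to be the main point to verify carefully.
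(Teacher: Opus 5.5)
Your proof is correct, but it takes a different route from the paper. The paper uses the first hypercohomology spectral sequence ${}^{I}E_1^{a,q}=H^q(\mc{C}^a)\Rightarrow\mathbb{H}^{a+q}(\mc{C}^\bullet)$, whose abutment is zero because $\mc{C}^\bullet$ is exact. Hypothesis (1) kills the targets $H^{l-r+1}(\mc{C}^{-p+r})$ of every differential $d_r$ leaving position $(-p,l)$, for $1\le r\le l+1$. Hypothesis (2) kills the sources $H^{l+r-1}(\mc{C}^{-p-r})$ of every $d_r$ entering it, for $1\le r\le n-l+1$. Hence $H^l(\mc{C}^{-p})=E_1^{-p,l}=E_\infty^{-p,l}=0$.

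You instead dimension-shift along the short exact sequences $0\to\mc{B}_{i+1}\to\mc{C}^{-i}\to\mc{B}_i\to 0$. The two arguments use exactly the same groups: step $k$ of your right-hand chain uses the target of $d_{k+1}$ leaving $(-p,l)$, and step $k$ of your left-hand chain uses the source of $d_{k+1}$ entering it. Your endpoint bookkeeping is right and reproduces the ranges in (1) and (2) exactly. On the right you stop at $\mc{B}_1=\mc{C}^0$, or at $H^0$ via $H^0(\mc{B}_{p-l})\hookrightarrow H^0(\mc{C}^{-(p-l-1)})$ when $l<p$. On the left you stop at $H^{n+1}=0$. The spectral sequence is more compact and makes the ranges visible at a glance. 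Your route is fully elementary, using only long exact sequences and Grothendieck vanishing.

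One small omission: you invoke the sequence at $i=p$, which needs $p\ge 1$. The paper, however, applies this lemma with $p=0$ in the proof of Lemma~\ref{lem:vanishing2}, where the target $\mc{P}^r_p(E_p)((t-r)d)$ is the last term $\mc{C}^0$. This is harmless: in that case $\mc{C}^0=\mc{B}_1$, hypothesis (1) is vacuous, and your left-hand chain alone gives $H^l(\mc{B}_1)=0$.
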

\begin{proof}
    Consider the spectral sequence $\prescript{I}{}{E}$ for $\mc{C}^\bullet$. By Assumption (1), every differential leaving $\prescript{I}{}{E}_r^{-p,l}$ vanishes for all $r \ge 1$, while by Assumption (2), every differential mapping to $\prescript{I}{}{E}_r^{-p,l}$ also vanishes for all $r \ge 1$. Hence, $H^l(\mc{C}^{-p}) = \prescript{I}{}{E}_1^{-p,l} = \prescript{I}{}{E}_\infty^{-p,l}$. Since $\mc{C}^\bullet$ is exact, we have $\prescript{I}{}{E}_\infty^{-p,l} = 0$ which concludes the proof.
\end{proof}

\subsection{Symmetric and Exterior products}\label{sec:prelim-symext}
In this section, we recall some useful facts about the symmetric and exterior products of complexes in our setting. For a more general discussion of Schur complexes, see \cite{weyman}*{Section~2.4}. For details on these facts presented in this section, see \cite{yang}*{Section~2.4}. Let 
\[\mc{Q}_i^r(-) \coloneqq \begin{cases}
    \Sym^r(-) &i \text{ odd},\\
    \bigwedge^r(-) &i \text{ even},
\end{cases} \qquad \mc{P}_i^r(-) \coloneqq \begin{cases}
    \bigwedge^r(-) &i \text{ odd},\\
    \Sym^r(-) &i \text{ even}.
\end{cases}\]
Assume that
\[\mc{C}^\bullet \colon 0 \lra \mc{C}^{-m} \lra \cdots \lra \mc{C}^{-1} \lra \mc{C}^0 \lra 0\]
is a bounded complex of vector bundles on $\PP^n$ where
\[\mc{H}^i(\mc{C}^\bullet) = \begin{cases}
    \mc{E} & i = -m,\\
    \mc{F} & i = 0,\\
    0 & \text{otherwise,}
\end{cases}\]
for some vector bundles $\mc{E},\mc{F}$ on $\PP^n$. The terms of symmetric and exterior products of $\mc{C}^\bullet$ are
\begin{equation}\label{eq:termsSym}
    \left(\Sym^r(\mc{C}^\bullet)\right)^{i} = \bigoplus_{\substack{a_0,\dots,a_m \ge 0\\ a_0 + \cdots + a_m = r\\ a_1 + 2a_2 + \cdots + ma_m = -i}} \bigotimes_{j = 0}^m \mc{P}^{a_j}_{j} (\mc{C}^{-j}),
\end{equation}
\begin{equation}\label{eq:termsWedge}
    \left(\bigwedge^r(\mc{C}^\bullet)\right)^{i} = \bigoplus_{\substack{a_0,\dots,a_m \ge 0\\ a_0 + \cdots + a_m = r\\ a_1 + 2a_2 + \cdots + ma_m = -i}} \bigotimes_{j = 0}^m \mc{Q}^{a_j}_{j} (\mc{C}^{-j}).
\end{equation}
Their cohomology sheaves are
\begin{equation}\label{eq:cohomologySheavesSym}
    \mc{H}^i\left(\Sym^r(\mc{C}^\bullet)\right) = \begin{cases}
    \mc{P}_m^{i'}(\mc{E})\otimes \Sym^{r-i'}(\mc{F})& i = -mi',\\
    0& \text{otherwise}.
\end{cases}
\end{equation}
\begin{equation}\label{eq:cohomologySheavesWedge}
    \mc{H}^i\left(\bigwedge^r(\mc{C}^\bullet)\right) = \begin{cases}
    \mc{Q}_m^{i'}(\mc{E})\otimes \bigwedge^{r-i'}(\mc{F})& i = -mi',\\
    0& \text{otherwise}.
\end{cases}
\end{equation}
We prove a lemma that will be useful later.
\begin{lemma}\label{lem:lesSym}
    Given a short exact sequence
    \[\mc{C}^\bullet \colon 0 \lra \mc{C}^{-2} \lra \mc{C}^{-1} \lra \mc{C}^0 \lra 0,\]
    there is an induced long exact sequence given by
    \[0 \lra \mc{P}_i^r(\mc{C}^{-2}) \lra \mc{P}^{r}_i(\mc{C}^{-1}) \lra \mc{P}^{r-1}_i(\mc{C}^{-1}) \otimes \mc{P}^1_{i-1}(\mc{C}^0)\lra \cdots \lra \mc{P}^r_{i-1}(\mc{C}^0)\lra 0,\]
    for every positive integer $i,r$. In particular, if $i$ is even and $\mc{C}^0$ is a line bundle, we have a short exact sequence
    \[0 \lra \Sym^r(\mc{C}^{-2}) \lra \Sym^{r}(\mc{C}^{-1}) \lra \Sym^{r-1}(\mc{C}^{-1}) \otimes \mc{C}^0 \lra 0.\]
\end{lemma}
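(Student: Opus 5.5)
The plan is to read the short exact sequence $0 \to \mc{C}^{-2} \to \mc{C}^{-1} \to \mc{C}^0 \to 0$ as a complex $\mc{C}^\bullet$ with $\mc{H}^i(\mc{C}^\bullet)=0$ for all $i$. We then apply the symmetric/exterior power construction of Section~\ref{sec:prelim-symext}, but to the two-term complex $\mc{D}^\bullet\colon \mc{C}^{-1}\to\mc{C}^0$ placed in degrees $-1,0$. This complex has $\mc{H}^{-1}(\mc{D}^\bullet)=\mc{C}^{-2}$ and $\mc{H}^0(\mc{D}^\bullet)=0$, so it fits the setup there with $m=1$, $\mc{E}=\mc{C}^{-2}$ and $\mc{F}=0$. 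To account for the parity index $i$, shift the homological placement: put $\mc{C}^{-1}$ in degree $-i$ and $\mc{C}^0$ in degree $-(i-1)$. Equivalently, use the general fact that for a two-term complex $A\to B$ with $A$ in an odd (resp.\ even) position, the $r$-th power in the sense of (\ref{eq:termsSym}) has terms $\mc{P}^{r-k}_{i}(A)\otimes\mc{P}^{k}_{i-1}(B)$. The result is the complex
\[\mc{P}^{r}_i(\mc{C}^{-1}) \lra \mc{P}^{r-1}_i(\mc{C}^{-1}) \otimes \mc{P}^1_{i-1}(\mc{C}^0)\lra \cdots \lra \mc{P}^r_{i-1}(\mc{C}^0),\]
with Koszul-type differentials induced by $\mc{C}^{-1}\to\mc{C}^0$. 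Since $\mc{P}_i$ and $\mc{P}_{i-1}$ are always opposite functors (one $\Sym$, one $\bigwedge$), this is precisely the Schur-complex shape.

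The key step is computing the cohomology of this complex from (\ref{eq:cohomologySheavesSym})/(\ref{eq:cohomologySheavesWedge}) with $\mc{F}=0$. Every term $\mc{P}^{i'}(\mc{E})\otimes\Sym^{r-i'}(\mc{F})$ or $\mc{Q}^{i'}(\mc{E})\otimes\bigwedge^{r-i'}(\mc{F})$ vanishes unless $i'=r$, which gives a single nonzero cohomology sheaf $\mc{P}^r_i(\mc{C}^{-2})$ at the leftmost position. The parity bookkeeping has to be checked: the functor applied to $\mc{E}$ in (\ref{eq:cohomologySheavesSym}) is $\mc{P}_m$ relative to the position of $\mc{E}$, and this matches $\mc{P}_i$ once the shift is accounted for. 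So the complex is exact except at the left end, where its kernel is $\mc{P}^r_i(\mc{C}^{-2})$. Splicing $\mc{P}^r_i(\mc{C}^{-2})$ in on the left produces the claimed long exact sequence.

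Rather than citing the complex-level formulas, which were stated for bundles, one can also argue directly. Locally $\mc{C}^{-1}\simeq \mc{C}^{-2}\oplus\mc{C}^0$ since all terms are vector bundles, and the standard filtration of $\Sym^r$ or $\bigwedge^r$ of an extension gives exactness of the Koszul complexes $\Sym^{r-k}(A)\otimes\bigwedge^k(B)$ associated to the surjection. This local splitting check is routine but is where care is needed. The main obstacle is only the parity and indexing bookkeeping, ensuring that $\mc{P}_i$ on $\mc{C}^{-1}$ pairs with $\mc{P}_{i-1}$ on $\mc{C}^0$ and with $\mc{P}_i$ on the kernel.

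For the special case, take $i$ even, so that $\mc{P}_i=\Sym$ and $\mc{P}_{i-1}=\bigwedge$. If $\mc{C}^0$ is a line bundle, then $\bigwedge^k\mc{C}^0=0$ for $k\ge2$. The sequence therefore truncates to
\[0\to\Sym^r(\mc{C}^{-2})\to\Sym^r(\mc{C}^{-1})\to\Sym^{r-1}(\mc{C}^{-1})\otimes\mc{C}^0\to0.\]
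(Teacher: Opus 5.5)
Your proposal is correct and follows essentially the paper's argument: restrict to the two-term complex $\mc{C}^{-1}\to\mc{C}^0$, use the formulas of Section~\ref{sec:prelim-symext} with $\mc{F}=0$ to see that its $r$-th power has a single cohomology sheaf $\mc{P}^r_i(\mc{C}^{-2})$ in the leftmost position, and splice that sheaf in (the paper phrases this as the mapping cone of $\mc{P}^r_i(\mc{C}^{-2})[r]\to\mc{P}^r_{i-1}(\mc{C}^\bullet_{\ge -1})$). The only difference is cosmetic: the paper handles parity by applying $\mc{P}^r_{i-1}$ (that is, $\Sym^r$ or $\bigwedge^r$) to the unshifted complex, whereas you shift the complex by $i-1$ and apply $\Sym^r$, which is equivalent by d\'ecalage.
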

\begin{proof}
    Consider the truncation of $\mc{C}^\bullet$ given by
    \[\mc{C}^\bullet_{\ge -1} \colon 0 \lra \mc{C}^{-1} \lra \mc{C}^0 \lra 0.\]
    Since $\mc{C}^\bullet$ is exact, the cohomology sheaves of $\mc{C}^\bullet_{\ge -1}$ are given by
    \[\mc{H}^{i}(\mc{C}^\bullet_{\ge -1}) = \begin{cases}
        \mc{C}^{-2} & i = -1,\\
        0 &\text{otherwise.}
    \end{cases}\]
    Applying the functor $\mc{P}_{i-1}^r$ to $\mc{C}^\bullet_{\ge -1}$, by (\ref{eq:termsSym})-(\ref{eq:cohomologySheavesWedge}), we get a complex
    \[0 \lra \mc{P}_i^r(\mc{C}^{-1}) \lra \mc{P}^{r-1}_i (\mc{C}^{-1})\otimes \mc{P}^1_{i-1}(\mc{C}^0) \lra \cdots \lra \mc{P}^r_{i-1}(\mc{C}^0) \lra 0,\]
    with cohomology sheaves
    \[\mc{H}^q\left( \mc{P}_{i-1}^r(\mc{C}^\bullet_{\ge -1} )\right) = \begin{cases}
        \mc{P}^r_i(\mc{C}^{-2}) &q = -r,\\
        0 &\text{otherwise}.
    \end{cases}\]
    Taking the mapping cone of the natural inclusion
    \begin{equation}\label{eq:naturalInclusion}
        \mc{P}^r_i(\mc{C}^{-2})[r] \lra \mc{P}_{i-1}^r(\mc{C}^\bullet_{\ge -1}),
    \end{equation}
    we obtain the claimed long exact sequence.

    If $\mc{C}^0$ is a line bundle and $i$ is even, $\mc{P}_{i-1}^r(\mc{C}^\bullet_{\ge -1})$ specializes to the complex
    \[0 \lra \Sym^r(\mc{C}^{-1}) \lra \Sym^{r-1}(\mc{C}^{-1}) \otimes \mc{C}^0 \lra \Sym^{r-2}(\mc{C}^{-1}) \otimes \bigwedge^2(\mc{C}^0) \lra \cdots \lra \bigwedge^r(\mc{C}^0) \lra 0.\]
    Since $\mc{C}^0$ is a line bundle, we have $\bigwedge^2(\mc{C}^0) = 0$. Hence, the mapping cone of (\ref{eq:naturalInclusion}) gives the desired short exact sequence.
\end{proof}
\begin{remark}\label{rmk:C-2linebundle}
    Under the hypothesis of Lemma~\ref{lem:lesSym}, if $\mc{C}^{-2}$ is a line bundle, we have a short exact sequence
    \[0 \lra \mc{C}^{-2} \otimes \Sym^{r-1}(\mc{C}^{-1}) \lra \Sym^{r}(\mc{C}^{-1}) \lra \Sym^r{\mc{C}^0} \lra 0.\]
    The proof is completely analogous to that of Lemma~\ref{lem:lesSym}.
\end{remark}
\subsection{Castelnuovo--Mumford regularity}\label{sec:prelim-CM}
Let $M$ be a finitely generated graded $S$-module. The \textbf{Castelnuovo--Mumford regularity}, or simply \textbf{regularity}, of $M$ is defined by
\[\reg(M) = \max\left\{j \colon \beta_{i,j}(M) = \dim\left( \Tor_i^S(M,\CC)_{i+j}\right) \ne 0 \,\text{ for some }i \right\}.\]
For a projective variety $X \subseteq \PP^n$, $\reg(X)$ is defined to be the regularity of the homogeneous coordinate ring of $X$ as an $S$-module. There is a simpler description when $M$ is of finite length. By \cite{DE-syzygies}*{Corollary~4.4},
\begin{equation}\label{eq:regFL}
    \reg(M) = \max\{j \colon M_j \ne 0\}.
\end{equation}
In particular, for any $\m$-primary ideal $I$, since $\reg(I) = \reg(S/I) + 1$, we have
\begin{equation}\label{eq:regOfIimpliesvanishing}
    (S/I)_{j} = 0 \quad \text{for all }j \ge \reg(I).
\end{equation}
\begin{lemma}\label{lem:regofpowers}
    For $r \ge 1$, if the resolution of $I$ is virtually linear for $p$ steps, then
    \[\reg(I^r) \le \left(\left\lfloor \frac{n}{p+1}\right\rfloor  + r\right)d.\]
\end{lemma}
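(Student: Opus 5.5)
The plan is to translate the regularity bound into a surjectivity statement and prove that surjectivity by a hypercohomology computation, following the strategy of Lemma~\ref{lem:hypercohomology2}. Since $I$ is $\m$-primary, so is $I^r$, and $\reg(I^r)=\reg(S/I^r)+1$. Set $k=\lfloor n/(p+1)\rfloor$. By (\ref{eq:regFL}) it suffices to show $(S/I^r)_m=0$ for all $m\ge (k+r)d$. Because $\m^{a}\cdot\m^{b}=\m^{a+b}$, it is enough to treat $m=(k+r)d+j$ for every $j\ge 0$. In sheaf language this means showing that
\[\Sym^r V\otimes H^0(\mc{O}_{\PP^n}(kd+j)) \lra H^0(\mc{O}_{\PP^n}((k+r)d+j))\]
is surjective for all $j\ge 0$.

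Next I build the relevant complex. Let $\mc{A}^\bullet_{\le 0}$ be the complex (\ref{eq:complexA}) with the target $\mc{O}_{\PP^n}(d)$ removed, truncated at $V_p\otimes\mc{O}(-p)$. Virtual linearity for $p$ steps says its cohomology sheaves are $E_p$ in degree $-p$ and $\mc{O}_{\PP^n}(d)$ in degree $0$. Here $\partial_0$ is surjective because $I$ is $\m$-primary, and exactness in between holds by hypothesis. I then apply $\Sym^r$, twist by $\mc{O}(kd+j)$, and form the augmented complex $\widehat{\mc{C}}^\bullet$. By (\ref{eq:termsSym}), every term of $\Sym^r(\mc{A}^\bullet_{\le0})$ in degree $-i$ is a sum of copies of $\mc{O}(-i)$. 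Hence after twisting, the terms are $U_i\otimes\mc{O}(-i+kd+j)$ with $kd+j\ge0$, so Lemma~\ref{lem:hypercohomology2} applies. It identifies the cokernel above with $\mathbb{H}^1(\widehat{\mc{C}}^\bullet)$.

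To show $\mathbb{H}^1(\widehat{\mc{C}}^\bullet)=0$, I use the second spectral sequence. By (\ref{eq:cohomologySheavesSym}), the only cohomology sheaves of $\widehat{\mc{C}}^\bullet$ are
\[\mc{P}^{i'}_p(E_p)\otimes\mc{O}((r-i')d+kd+j)\quad\text{in degree } -pi', \qquad 1\le i'\le r.\]
The $\mc{H}^0$ term has been killed by the augmentation. So it suffices to prove
\[H^{pi'+1}\bigl(\mc{P}^{i'}_p(E_p)((r-i'+k)d+j)\bigr)=0 \quad\text{for } 1\le i'\le r.\]
This is automatic when $pi'+1>n$.

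The main obstacle is this vanishing for $i'\le (n-1)/p$, which is exactly where $k=\lfloor n/(p+1)\rfloor$ must enter. My first attempt is to resolve $\mc{P}^{i'}_p(E_p)$ using Lemma~\ref{lem:lesSym} applied iteratively to the short exact pieces $0\to E_i\to V_i\otimes\mc{O}(-i)\to E_{i-1}\to 0$. This expresses the needed cohomology of $\mc{P}^{i'}_p(E_p)$ through Schur functors of $E_0=M_V$ and sums of line bundles. I then run Lemma~\ref{lem:hypercohomology1} on the resulting exact complexes to shift the required vanishing down to cohomology of $\mc{O}(\ell)$ with $\ell\ge 0$, or to $H^{q}$ with $q\ge 1$ of globally generated pieces. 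Each factor of $E_p$ moves the cohomological index by $p+1$ (namely $p$ from the position and $1$ from the extension by $\mc{O}(d)$), while costing at most one factor of $d$ in the twist. Consequently the surplus twist $kd$ should absorb exactly the $\lfloor n/(p+1)\rfloor$ steps before the cohomological degree exceeds $n$. The bookkeeping that makes this count precise is the step I expect to be delicate.
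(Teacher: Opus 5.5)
There is a genuine gap. The vanishing you leave as the ``main obstacle'' is not a bookkeeping step; it is the whole content of the lemma, and the method you sketch for it is circular.

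Look at the term $i'=1$, which is everything when $r=1$. For $\ell\ge 0$, apply Lemma~\ref{lem:hypercohomology2} to $\widehat{\mc{A}}^\bullet\otimes\mc{O}_{\PP^n}(\ell)$, or equivalently chase the sequences $0\to E_i\to V_i\otimes\mc{O}_{\PP^n}(-i)\to E_{i-1}\to 0$ downward. Either way you get $H^{p+1}(E_p(\ell))\cong H^1(E_0(\ell))\cong (S/I)_{\ell+d}$. Hence your required vanishing $H^{p+1}(E_p((r-1+k)d+j))=0$ is literally the statement $(S/I)_{(r+k)d+j}=0$. For $r=1$, $j=0$ this is exactly $\reg(I)\le (k+1)d$, which is what you are trying to prove.

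Resolving $\mc{P}^{i'}_p(E_p)$ by iterating Lemma~\ref{lem:lesSym} down to $E_0=M_V$ only reproduces this isomorphism; it produces no vanishing. Nor can you finish with ``higher cohomology of globally generated pieces.'' For instance, $E_0(d)$ is globally generated, yet $H^1(E_0(d))=(S/I)_{2d}$ can be nonzero (e.g.\ for a complete intersection of $n+1$ quadrics with $n\ge 3$).

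For $i'\ge 2$ you would also need $H^{pi'+1}(\mc{P}^{i'}_p(E_p)(\ell))=0$ for each $E_2$-term separately. This is stronger than what the lemma requires. It is the kind of vanishing the paper obtains only in Lemma~\ref{lem:vanishing2}, in the range $r(p+1)\ge n+1$ and under a global generation hypothesis.

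The missing input is the regularity bound for $I$ itself, $\reg(I)\le\lceil\frac{n+1}{p+1}\rceil(d-1)+1$, from \cite{yang}*{Theorem~1.3}. This is where virtual linearity for $p$ steps is genuinely used. Your heuristic that each factor of $E_p$ advances the cohomological degree by $p+1$ would have to become an actual proof of this bound, and your sketch does not do that.

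Given that bound, the paper's argument is two lines:
\begin{enumerate}
\item The inequality $\reg(I^r)\le\reg(I)+(r-1)d$ is elementary. By induction on $r$, for $m\ge\reg(I)+(r-1)d$ one has $(I^r)_m\supseteq I_d\cdot (I^{r-1})_{m-d}=I_dS_{m-d}=I_m=S_m$.
\item Since $\lceil\frac{n+1}{p+1}\rceil=k+1$, this gives $\reg(I^r)\le (k+1)(d-1)+1+(r-1)d=(k+r)d-k\le (k+r)d$.
\end{enumerate}
So your hypercohomology setup is unnecessary for general $r$, since everything reduces to $r=1$. For $r=1$ it merely restates the problem.
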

\begin{proof}
    Since $\left\lceil \frac{n+1}{p+1}\right\rceil = \left\lfloor \frac{n+p+1}{p+1}\right\rfloor = \left\lfloor \frac{n}{p+1}\right\rfloor + 1$, we have
    \begin{align*}
        \reg(I^r) &\overset{\text{\makebox[\widthof{\scriptsize \text{\cite{yang}*{Theorem~1.3}}}][c]{\text{\cite{yang}*{Lemma~2.2}}}}}{\le} \reg(I)+(r-1)d \\
        &\overset{\text{\cite{yang}*{Theorem~1.3}}}{\le}\left\lceil \frac{n+1}{p+1}\right\rceil(d-1)+1 + (r-1)d\\
        &\overset{\hphantom{\text{\cite{yang}*{Theorem~1.3}}}}{\le} \left(\left\lfloor \frac{n}{p+1}\right\rfloor  + r\right)d.
    \end{align*}
    This finishes the proof.
\end{proof}
\subsection{Some integer inequalities}
The purpose of this section is to record a lemma that will be useful in the proof of Lemma~\ref{lem:vanishing2}.
\begin{lemma}\label{lem:inequal}
Let $p,a,b,n,r,t\in \mathbb{Z}$ satisfy
\[p\ge 1,\qquad a\ge 0,\qquad 0\le b\le p,\qquad n=a(p+1)+b,\]
and suppose that
\[r\ge a+1+\left\lfloor\frac{b+1}{p+1}\right\rfloor,\qquad rp+1\le n.\]
Then
\[(r-n+rp)p+2\le n.\]
Moreover, if
\[t\ge 2a+\left\lfloor\frac{b+1}{p+1}\right\rfloor\qquad\text{and}\qquad r\le t,\]
then, for every integer $d\ge 2$,
\[(t-r)d-(n-rp)(d+p)\ge (r-n+rp)p-n.\]
\end{lemma}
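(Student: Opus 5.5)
Both inequalities reduce to elementary rearrangements. Throughout, write $f=\left\lfloor\frac{b+1}{p+1}\right\rfloor$; since $0\le b\le p$, we have $f=1$ exactly when $b=p$, and $f=0$ otherwise. Set $m=n-rp$, so the hypothesis $rp+1\le n$ says $m\ge 1$.

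\emph{First inequality.} I will show that it needs only $rp+1\le n$ and $p\ge 1$. After multiplying out, $(r-n+rp)p+2\le n$ is equivalent to
\[r(p+1)p+2\le n(p+1).\]
From $rp\le n-1$ we get $r(p+1)p\le (n-1)(p+1)$. Hence
\[r(p+1)p+2\le n(p+1)-(p+1)+2=n(p+1)-(p-1)\le n(p+1),\]
where the last step uses $p\ge 1$. This proves the first claim.

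\emph{Second inequality.} I will expand both sides and subtract. The left side is
\[td-rd-nd-np+rpd+rp^2,\]
and the right side is
\[rp+rp^2-np-n.\]
Their difference is
\[d(t-r-n+rp)+(n-rp)=d(t-r-m)+m.\]
Since $m\ge 1$ and $d\ge 2$, it suffices to prove $t\ge r+m$, that is,
\[t\ge n-r(p-1).\]
This is the step I expect to carry the content. It is not automatic: if $t-r-m$ were negative, the bound would fail for large $d$, so the lower bound on $t$ must be used in an essential way.

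To prove $t\ge n-r(p-1)$, use $p-1\ge 0$ together with $r\ge a+1+f$. This gives
\[n-r(p-1)\le a(p+1)+b-(a+1+f)(p-1)=2a+b-(1+f)(p-1).\]
There are two cases.
\begin{itemize}
\item If $b<p$, then $f=0$ and the right-hand side is $2a+b-p+1\le 2a=2a+f$.
\item If $b=p$, then $f=1$ and the right-hand side is $2a-p+2\le 2a+1=2a+f$, using $p\ge 1$.
\end{itemize}
In both cases $n-r(p-1)\le 2a+f\le t$ by the hypothesis on $t$. Therefore $t-r-m\ge 0$, and the difference $d(t-r-m)+m$ is at least $m\ge 1>0$, which proves the second inequality. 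The hypothesis $r\le t$ is not needed in this argument.
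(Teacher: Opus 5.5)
Your proof is correct and complete. The paper omits the proof of this lemma (``Proof. Omitted.''), so there is no argument of theirs to compare against; your direct expansion supplies the missing verification. Your steps all check out:
\begin{itemize}
\item the rewriting of the first inequality as $r(p+1)p+2\le n(p+1)$;
\item the identity $\mathrm{LHS}-\mathrm{RHS}=d(t-r-m)+m$ with $m=n-rp\ge 1$;
\item the reduction to $t\ge n-r(p-1)$;
\item the case split on $b=p$ versus $b<p$.
\end{itemize}

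Two by-products are worth recording. First, the first inequality uses only $rp+1\le n$ and $p\ge 1$, not the lower bound on $r$. Second, the hypothesis $r\le t$ is redundant, since you actually show $t\ge r+(n-rp)\ge r+1$.

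This matches the use in Lemma~\ref{lem:vanishing2}. There $a=\lfloor n/(p+1)\rfloor$ and $2a+\lfloor (b+1)/(p+1)\rfloor=s(n,p)$, so the stated hypotheses are exactly $r\ge s(n,p)-\lfloor n/(p+1)\rfloor+1$ and $t\ge s(n,p)$.
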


\begin{proof} Omitted.
\end{proof}

\section{Powers of an ideal}

\subsection{Some vanishing lemmas} In this section, we first prove the vanishing lemmas needed for Theorem~\ref{thm:powers}. Recall from Section~\ref{sec:prelim-symext} the functor
\[\mc{P}_i^r(-) \coloneqq \begin{cases}
    \bigwedge^r(-) &i \text{ odd},\\
    \Sym^r(-) &i \text{ even}.
\end{cases}\]
\begin{lemma}\label{lem:vanishing1}
    Suppose the resolution of $I$ is virtually linear for $p$ steps. For all $0 \le i \le p$ and $r \ge 1$, we have
    \[H^l(\mc{P}^r_i(E_i) (j)) = 0 \quad \text{for all }j \in \mathbb{Z}, ri+2 \le l \le n-1.\]
\end{lemma}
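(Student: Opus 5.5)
The plan is to induct on $i$, using the short exact sequences $0 \to E_i \to V_i\otimes\mc{O}_{\PP^n}(-i) \to E_{i-1} \to 0$ for $1 \le i \le p$. Exactness here is exactly the virtual linearity hypothesis: the sheafified complex $\mc{A}^\bullet$ is exact through the relevant degrees, so the image of $\partial_i$ equals $E_{i-1}=\ker\partial_{i-1}$. For $i=0$ we use $0\to E_0\to V\otimes\mc{O}_{\PP^n}\to\mc{O}_{\PP^n}(d)\to 0$, which is surjective on the right because $I$ is $\m$-primary.

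\textbf{Base case $i=0$.} Here $\mc{P}^r_0=\Sym^r$ and the range is $2\le l\le n-1$. Apply Lemma~\ref{lem:lesSym} with the even index to the sequence above, whose right term $\mc{O}(d)$ is a line bundle. This gives
\[0\to \Sym^r E_0\to \Sym^r V\otimes\mc{O}\to \Sym^{r-1}V\otimes\mc{O}(d)\to 0.\]
Twisting by $j$, the two right-hand terms are sums of line bundles, so they have no $H^q$ for $1\le q\le n-1$. Hence $H^l(\Sym^r E_0(j))=0$ for $2\le l\le n-1$.

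\textbf{Inductive step.} Suppose the claim holds for $i-1$, i.e.\ $H^l(\mc{P}^{s}_{i-1}(E_{i-1})(j))=0$ for $s(i-1)+2\le l\le n-1$, all $s\ge 1$ and all $j$. Apply Lemma~\ref{lem:lesSym} with index $i$ to the sequence $0\to E_i\to V_i(-i)\to E_{i-1}\to 0$. This gives the exact complex
\[0\to\mc{P}^r_i(E_i)\to\mc{P}^r_i(V_i(-i))\to \mc{P}^{r-1}_i(V_i(-i))\otimes E_{i-1}\to\cdots\to \mc{P}^{s}_i(V_i(-i))\otimes\mc{P}^{r-s}_{i-1}(E_{i-1})\to\cdots\to\mc{P}^r_{i-1}(E_{i-1})\to 0.\]
Each middle term is a direct sum of twists of $\mc{P}^{r-s}_{i-1}(E_{i-1})$, since $\mc{P}^s_i(V_i(-i))$ is a sum of line bundles. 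The complex has length $r+1$ after the first term. We chase $H^l(\mc{P}^r_i(E_i)(j))$ to the right, either by splitting into short exact sequences or by an argument in the spirit of Lemma~\ref{lem:hypercohomology1}. In position $k$ (with $k=1,\dots,r$ counting $\mc{P}^{r-k}_i(V_i)\otimes\mc{P}^k_{i-1}(E_{i-1})$) we need the vanishing of $H^{l-k-1}$ of that term, together with one neighbouring degree. By induction this holds as soon as $l-k-1\ge k(i-1)+2$ and $l-k-1\le n-1$.

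The degree checks work as follows. When $l\ge ri+2$, we get $l-k-1\ge ri+1-k\ge k(i-1)+2$ whenever $k\le r-1$, since $r i - k(i-1) - k = (r-k)i \ge i\ge 1$. The endpoint $k=r$ gives $l-r-1\ge r(i-1)+1$, one short. So the bookkeeping has to be set up carefully. The plan is to dimension-shift using $H^{l-1}$ of the cokernel pieces, as opposed to naively using $H^{l-k}$. Done that way, the needed degree at step $k$ is $l-k$, and $l-k\ge ri+2-k\ge k(i-1)+2$ holds for all $k\le r$. The $k=0$ term, a sum of line bundles, contributes nothing in degrees $1$ through $n-1$. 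The upper bound $l-k\le n-1$ is automatic.

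\textbf{Main obstacle.} The hard part is exactly this index bookkeeping: choosing the correct direction of the chase so the shift is by $k$ and not $k+1$. A related issue is that the degrees $l-k$ may drop to $0$ or below, where no vanishing is available. This is harmless, because $l-k\ge k(i-1)+2\ge 2$ always holds.
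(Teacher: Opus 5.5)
Your proposal is correct and follows the paper's proof: induction on $i$, a base case via Lemma~\ref{lem:lesSym} applied to $0\to E_0\to V\otimes\mc{O}_{\PP^n}\to\mc{O}_{\PP^n}(d)\to 0$, and an inductive step that needs exactly $H^{l-k}$ of the $k$-th term $\mc{P}^{r-k}_i(V_i)\otimes\mc{P}^k_{i-1}(E_{i-1})(j-(r-k)i)$ to vanish for $0\le k\le r$, which is what the paper extracts from Lemma~\ref{lem:hypercohomology1} and what your dimension-shifting chase also gives. Your intermediate claim that one needs $H^{l-k-1}$ together with a neighbouring degree is simply wrong, since splitting into short exact sequences requires only $H^{l-k}$ of each term with no second degree, so drop that detour rather than presenting it as a pitfall that calls for a cleverer chase.
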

\begin{proof}
    We induct on $i$. For the base case $i = 0$, we want to show that
    \begin{equation}\label{eq:vanishingLemmaBaseCase}
        H^l(\Sym^r E_0 (j)) = 0 \quad \text{for all }j \in \mathbb{Z}, 2 \le l \le n-1.
    \end{equation}
    By the definition of $E_0$, we have a short exact sequence
    \[0 \lra E_0 \lra V \otimes \mc{O}_{\PP^n} \lra \mc{O}_{\PP^n}(d) \lra 0.\]
    Since $\mc{O}_{\PP^n}(d)$ is a line bundle, by Lemma~\ref{lem:lesSym}, we have a short exact sequence
    \[0 \lra \Sym^r E_0(j) \lra \Sym^r V (j) \lra \Sym^{r-1}V (j+d) \lra 0,\]
    for every integer $j$.
    Consider its induced long exact sequence in sheaf cohomology. For every $l$ such that $2 \le l \le n-1$, we have an exact sequence
    \[0 = H^{l-1}(\Sym^{r-1}V(j+d)) \lra H^l(\Sym^rE_0(j)) \lra H^l(\Sym^r V (j)) = 0.\]
    This proves (\ref{eq:vanishingLemmaBaseCase}). For the induction step, fix $i$ such that $1 \le i\le p$. The induction hypothesis says that
    \begin{equation}\label{eq:inductionHypo}
        H^l(\mc{P}^r_{i-1} (E_{i-1}) (j)) = 0 \quad \text{for all }j \in \mathbb{Z}, ri - r + 2\le l \le n-1.
    \end{equation}
    Since the resolution of $I$ is virtually linear for $p$ steps, we get a short exact sequence
    \[0 \lra E_i \lra V_i(-i) \lra E_{i-1} \lra 0.\]
    Applying Lemma~\ref{lem:lesSym} with $\mc{P}_i^r$ and tensoring with $\mc{O}_{\PP^n}(j)$, we obtain a long exact sequence
    \[0\lra \mc{P}^r_i (E_i)(j) \lra \mc{P}_i^r(V_i)(j-ri)\lra \cdots \lra \mc{P}^r_{i-1}(E_{i-1})(j) \lra 0.\]
    By Lemma~\ref{lem:hypercohomology1}, it suffices to show that
    \[\prescript{I}{}{E}_1^{-r+k,l-k} = H^{l-k}(\mc{P}^{r-k}_i(V_i) \otimes \mc{P}^{k}_{i-1}(E_{i-1})(j-ri+ki))=0 \quad \text{for all }0 \le k \le r.\]
    Since $ki-k+2 \le ri-k+2 \le l-k \le n-1-k \le n-1$, we have
    \[H^{l-k}(\mc{P}^{r-k}_i(V_i) \otimes \mc{P}^{k}_{i-1}(E_{i-1})(j-ri+ki)) = \mc{P}^{r-k}_i(V_i) \otimes H^{l-k}(\mc{P}^{k}_{i-1}(E_{i-1})(j-ri+ki)) \overset{(\ref{eq:inductionHypo})}{=} 0.\]
    This concludes the proof.
\end{proof}
\begin{lemma}\label{lem:vanishing2'}
    Suppose the resolution of $I$ is virtually linear for $p$ steps. If $rp+2 \le n$ and $r \ge 1$, then we have
    \[H^n(\mc{P}^r_p(E_p)(j)) = 0 \quad \text{for all }j \ge rp-n.\]
\end{lemma}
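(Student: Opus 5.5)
The plan is to realize $\mc{P}^r_p(E_p)$ as the only cohomology sheaf of an exterior power of a complex whose terms are sums of line bundles. Once that is done, the vanishing of $H^n$ follows by splitting the resulting exact sequence into short exact sequences and chasing cohomology.

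\textbf{Step 1: the complex.} Since the resolution of $I$ is virtually linear for $p$ steps, the sheafified complex (\ref{eq:complexA}) is exact except for $E_p=\ker\partial_p$. Consider the bounded complex of vector bundles
\[\mc{C}^\bullet\colon 0\lra V_p(-p)\lra\cdots\lra V_1(-1)\lra V\otimes\mc{O}_{\PP^n}\lra\mc{O}_{\PP^n}(d)\lra 0,\]
indexed so that $\mc{C}^0=\mc{O}_{\PP^n}(d)$, $\mc{C}^{-1}=V\otimes\mc{O}_{\PP^n}$ and $\mc{C}^{-k}=V_{k-1}(-(k-1))$ for $2\le k\le p+1$. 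Its only cohomology sheaf is $\mc{H}^{-(p+1)}=E_p$, since $\partial_0$ is surjective ($I$ is $\m$-primary). So we are in the setting of Section~\ref{sec:prelim-symext} with $m=p+1$, $\mc{E}=E_p$ and $\mc{F}=0$.

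\textbf{Step 2: apply $\bigwedge^r$.} By (\ref{eq:cohomologySheavesWedge}), $\bigwedge^r(\mc{C}^\bullet)$ has a single nonzero cohomology sheaf, sitting in degree $-(p+1)r$ and equal to $\mc{Q}^r_{p+1}(E_p)=\mc{P}^r_p(E_p)$; the parities are swapped, as desired. By (\ref{eq:termsWedge}), the leftmost term is in degree $-(p+1)r$ and equals $\mc{Q}^r_{p+1}(\mc{C}^{-(p+1)})=\mc{P}^r_p(V_p)\otimes\mc{O}_{\PP^n}(-rp)$. Every term is a direct sum of line bundles $\mc{O}_{\PP^n}(e)$. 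Since $\mc{Q}_0=\bigwedge$ and $\mc{C}^0$ is a line bundle, we have $a_0\le 1$. Hence the highest nonzero degree is attained by $a_0=1$, $a_1=r-1$, which lies in degree $-(r-1)$. We therefore get an exact sequence
\[0\lra \mc{P}^r_p(E_p)\lra T_0\lra T_1\lra\cdots\lra T_L\lra 0,\qquad L=(p+1)r-(r-1)=rp+1,\]
where each $T_s$ is a sum of line bundles and $T_0=\mc{P}^r_p(V_p)(-rp)$.

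\textbf{Step 3: cohomology chase.} Twist by $\mc{O}_{\PP^n}(j)$ and split the sequence into short exact sequences with kernels $K_s$, so that $K_0=\mc{P}^r_p(E_p)(j)$ and $K_L=T_L(j)$. This gives exact sequences
\[H^{n-s-1}(K_{s+1})\lra H^{n-s}(K_s)\lra H^{n-s}(T_s(j)).\]
(Alternatively one can quote Lemma~\ref{lem:hypercohomology1}.) Thus it suffices to show $H^{n-s}(T_s(j))=0$ for $0\le s\le L$. For $s=0$ this is $H^n(\mc{O}_{\PP^n}(j-rp))=0$, which holds exactly when $j-rp\ge -n$, i.e.\ $j\ge rp-n$. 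For $1\le s\le L$ the hypothesis $rp+2\le n$ gives $1\le n-L\le n-s\le n-1$. Intermediate cohomology of line bundles on $\PP^n$ vanishes in this range, so these groups are zero regardless of the twists.

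\textbf{Main obstacle.} The main point to verify carefully is the bookkeeping in Step 2. First, the cohomology of $\bigwedge^r(\mc{C}^\bullet)$ must be concentrated exactly at the leftmost term, so that the kernel really is $\mc{P}^r_p(E_p)$; this is where $\mc{F}=0$ is used. Second, the length is $L=rp+1$, so the chase never reaches $H^0$. This is exactly where the hypothesis $rp+2\le n$ enters.
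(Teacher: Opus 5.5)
Your proof is correct, and it follows a different route from the paper's. The paper works one syzygy step at a time. It applies Lemma~\ref{lem:lesSym} to $0\to E_p\to V_p(-p)\to E_{p-1}\to 0$, which gives a long exact sequence with intermediate terms $\mc{P}^{r-i}_p(V_p)\otimes\mc{P}^{i}_{p-1}(E_{p-1})(j-(r-i)p)$. It then kills their $H^{n-i}$ with Lemma~\ref{lem:vanishing1}, itself an induction on the syzygy index built from repeated uses of Lemma~\ref{lem:lesSym}.

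You instead take $\bigwedge^r$ of the whole augmented complex, reindexed so that $\mc{O}_{\PP^n}(d)$ sits in degree $0$. Its only cohomology is $E_p$ in degree $-(p+1)$. By (\ref{eq:termsWedge})--(\ref{eq:cohomologySheavesWedge}) with $\mc{F}=0$, this produces in one stroke a resolution of $\mc{P}^r_p(E_p)$ of length $rp+1$ by direct sums of line bundles, so the chase only needs cohomology of line bundles on $\PP^n$. The bookkeeping is right:
\begin{itemize}
\item $\mc{Q}_{p+1}=\mc{P}_p$;
\item $a_0\le 1$ puts the right end in degree $-(r-1)$;
\item $rp+2\le n$ is exactly what keeps $n-s\in[1,n-1]$ for $1\le s\le rp+1$.
\end{itemize}

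Your construction is more self-contained, and it also gives Lemma~\ref{lem:vanishing1} directly. For $i=p$ use the same complex; for $i<p$ truncate at $V_i(-i)$. This gives a line-bundle resolution of $\mc{P}^r_i(E_i)$ of length $ri+1$, so $H^l$ for $ri+2\le l\le n-1$ reduces to intermediate cohomology of line bundles. The paper's route has the advantage of reusing Lemmas~\ref{lem:lesSym} and \ref{lem:vanishing1}, which it needs anyway in the proof of Lemma~\ref{lem:vanishing2}.
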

\begin{proof}
Since the resolution of $I$ is virtually linear for $p$ steps, we have a short exact sequence
\[0 \lra E_p \lra V_p(-p) \lra E_{p-1} \lra 0.\]
Applying Lemma~\ref{lem:lesSym} with $\mc{P}_p^r$ and tensoring with $\mc{O}_{\PP^n}(j)$, we obtain a long exact sequence
\[0 \lra \mc{P}^r_p (E_p) (j) \lra \mc{P}^r_p(V_p) (j - rp) \lra \cdots \lra \mc{P}^1_p(V_p) \otimes \mc{P}^{r-1}_{p-1}(E_{p-1})(j-p) \lra \mc{P}^{r}_{p-1}(E_{p-1})(j) \lra 0.\]
By Lemma~\ref{lem:hypercohomology1}, it suffices to show 
\begin{enumerate}
    \item $H^n(\mc{P}^r_p(V_p) (j - rp)) = 0$,
    \item $H^{n-i}(\mc{P}^{r-i}_p(V_p) \otimes \mc{P}^{i}_{p-1}(E_{p-1})(j- (r-i)p)) = 0$ for all $1 \le i \le r.$
\end{enumerate}
Since $j \ge rp-n$, by \cite{Hartshorne}*{Theorem~III.5.1}, we have $H^n(\mc{O}_{\PP^n}(j-rp)) = 0$ which implies (1). We now prove (2). Since $rp+2 \le n$ and $p \ge 1$, we have
\[n-i \ge rp-i+2 \overset{(i\le r)}{\ge} ip-i+2 = i(p-1)+2.\]
Hence, (2) follows from Lemma~\ref{lem:vanishing1}.
\end{proof}
\begin{lemma}\label{lem:vanishing2}
    Suppose the resolution of $I$ is virtually linear for $p$ steps. If $E_p(d+p)$ is globally generated, then 
    \[H^{rp+1}(\mc{P}^r_p(E_p) ((t-r)d)) = 0 \quad \text{for all }s(n,p) - \left\lfloor \frac{n}{p+1}\right\rfloor + 1 \le r \le t \text{ and } t \ge s(n,p).\]
\end{lemma}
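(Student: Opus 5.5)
The plan is to dispose of the trivial range first and then trade the cohomological degree $rp+1$ for the top degree $n$. Global generation of $E_p(d+p)$ lets us do this trade one factor of $E_p$ at a time, until Lemma~\ref{lem:vanishing2'} applies. If $rp+1>n$, the group $H^{rp+1}$ vanishes on $\PP^n$ for dimension reasons, so we may assume $rp+1\le n$. Write $n=a(p+1)+b$ with $0\le b\le p$. Then $\lfloor n/(p+1)\rfloor=a$, and $s(n,p)=2a+\lfloor (b+1)/(p+1)\rfloor$, since $n\equiv p \bmod (p+1)$ exactly when $b=p$. The hypotheses $r\ge s(n,p)-\lfloor n/(p+1)\rfloor+1$ and $t\ge s(n,p)$ are therefore precisely the hypotheses of Lemma~\ref{lem:inequal}. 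We set
\[k \coloneqq n-rp\ge 1,\qquad r' \coloneqq r-k = r-n+rp .\]
Lemma~\ref{lem:inequal} then gives $r'p+2\le n$ and $(t-r)d-k(d+p)\ge r'p-n$. These are exactly the hypotheses of Lemma~\ref{lem:vanishing2'} for $\mc{P}^{r'}_p(E_p)$ twisted by $(t-r)d-k(d+p)$. This strongly suggests the target statement
\[H^n\bigl(\mc{P}^{r'}_p(E_p)\bigl((t-r)d-k(d+p)\bigr)\bigr)=0,\]
from which we climb back up.

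For the climbing step, choose a surjection $U\otimes\mc{O}_{\PP^n}(-d-p)\to E_p$ coming from global generation, and let $K$ be its kernel. Applying Lemma~\ref{lem:lesSym} to $0\to K\to U(-d-p)\to E_p\to 0$ gives a resolution of $\mc{P}^r_p(E_p)$ by terms built from $\mc{P}^{r-m}_p$ of the middle term tensored with powers of $K$. In the simplest form we use only its first layer, namely the multiplication-induced surjection
\[U\otimes \mc{P}^{r-1}_p(E_p)(-d-p)\lra \mc{P}^r_p(E_p)\]
together with its kernel. Lemma~\ref{lem:hypercohomology1} (or the long exact sequence) then reduces the vanishing of $H^{l}(\mc{P}^{r}_p(E_p)(j))$ to two inputs: the vanishing of $H^{l}(\mc{P}^{r-1}_p(E_p)(j-d-p))$, and the vanishing of $H^{l+1}$ of the kernel terms. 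Iterating $k$ times lowers $r$ to $r'$, lowers the twist by $k(d+p)$, and raises the cohomological degree from $rp+1$ toward $n$. This is the bookkeeping that Lemma~\ref{lem:inequal} was designed to match.

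The kernel terms at each stage are handled in the same spirit. Via the sequence $0\to E_p\to V_p(-p)\to E_{p-1}\to 0$ and Lemma~\ref{lem:lesSym}, they are filtered by pieces $\mc{P}^{\bullet}_p(V_p)\otimes\mc{P}^{\bullet}_{p-1}(E_{p-1})(\ast)$. In the intermediate degrees $l$ with $2\le l\le n-1$, their relevant cohomology vanishes for every twist by Lemma~\ref{lem:vanishing1}, because the required inequalities $l\ge i(p-1)+2$ follow from $r'p+2\le n$.

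The main obstacle is controlling these kernels precisely. The multiplication map to $\mc{P}^r_p$ is not split, and the off-by-one between the degree we land in after $k$ steps and $H^n$ must be matched exactly. I would first try to replace the iterated argument by a single application of Lemma~\ref{lem:hypercohomology1} to the Koszul-type complex of $\mc{P}^r_p$ associated with $0\to K\to U(-d-p)\to E_p\to 0$. The entries in homological position $m$ have the form $\mc{P}^{r-m}_p(E_p)$-like pieces twisted by $-m(d+p)$. For $m<k$, their cohomology in degree $rp+1+m$ lies in the range $[ri+2,n-1]$ covered by Lemma~\ref{lem:vanishing1}. At $m=k$, the degree reaches $n$, and the twist inequality from Lemma~\ref{lem:inequal} is exactly what Lemma~\ref{lem:vanishing2'} needs.
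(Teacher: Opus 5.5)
The route you settle on in your last paragraph is the paper's proof. It applies Lemma~\ref{lem:lesSym} once to $0\to K\to U\otimes\mc{O}_{\PP^n}(-d-p)\to E_p\to 0$ and feeds the result into Lemma~\ref{lem:hypercohomology1}. Lemma~\ref{lem:vanishing1} handles the intermediate degrees, and Lemmas~\ref{lem:inequal} and~\ref{lem:vanishing2'} handle degree $n$. As written, though, the bookkeeping is wrong in two places.

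First, the functor to use in Lemma~\ref{lem:lesSym} is $\mc{P}^r_{p+1}$, so that the complex ends in $\mc{P}^r_p(E_p)((t-r)d)$. Its term in homological position $m$ is then $\mc{P}^m_{p+1}(U)\otimes\mc{P}^{r-m}_p(E_p)((t-r)d-m(d+p))$ for $0\le m\le r$. Position $r+1$ holds the single term $\mc{P}^r_{p+1}(K)((t-r)d)$. Your description as $\mc{P}^{r-m}_p$ of the middle term tensored with powers of $K$ is not what the lemma produces.

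Second, Lemma~\ref{lem:hypercohomology1} with $l=rp+1$ asks for $H^{rp+m}$ of the position-$m$ term, not $H^{rp+1+m}$. With your indexing the $m=k-1$ term already lands in degree $n$, outside the range of Lemma~\ref{lem:vanishing1}, and the $m=k$ term lands in degree $n+1$. With the correct indexing, degree $n$ is reached exactly at $m=k=n-rp$. The term there is precisely your target group, so the off-by-one you worry about disappears.

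The step you actually omit is the tail of the complex, which is the only place the unknown bundle $K$ enters. Since $r\ge a+1$, you have $r(p+1)\ge (a+1)(p+1)\ge n+1$, so every term with $m>k$ sits above degree $n$. In particular $H^{rp+r}$ of $\mc{P}^r_{p+1}(U)(\ast)$ and $H^{rp+r+1}(\mc{P}^r_{p+1}(K)(\ast))$ vanish by Grothendieck vanishing. The same inequality gives $k\le r-1$, i.e.\ $r'\ge 1$. So the critical degree-$n$ term is an $E_p$-term, to which Lemma~\ref{lem:vanishing2'} applies, and not the $K$-term, about which nothing is known. This is a separate use of the lower bound on $r$; it is not among the conclusions of Lemma~\ref{lem:inequal}, which is all you invoke.

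This is also why your third paragraph does not work as stated. The kernels of the successive multiplication maps are not filtered by pieces $\mc{P}^{\bullet}_p(V_p)\otimes\mc{P}^{\bullet}_{p-1}(E_{p-1})(\ast)$. They are resolved by the remainder of the same complex, ending in $\mc{P}^r_{p+1}(K)$, and are harmless only because that last term lies above degree $n$. Similarly, in your iteration the first input $H^{rp+1}(\mc{P}^{r-1}_p(E_p)(\ast))$ does not climb toward degree $n$. It already vanishes by Lemma~\ref{lem:vanishing1} when $rp+1\le n-1$, and the degree rises only along the chain of kernels.
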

\begin{proof}
    If $rp + 1 > n$, then the desired vanishing follows from Grothendieck vanishing. Hence we may assume $rp + 1 \le n$. Since $E_p(d+p)$ is globally generated, we have a short exact sequence given by
    \[0 \lra G \lra H^0(E_p(d+p)) \otimes \mc{O}_{\PP^n}(-d-p) \lra E_p \lra 0,\]
    for some vector bundle $G$. Applying Lemma~\ref{lem:lesSym} with $\mc{P}_{p+1}^r$ and tensoring with $\mc{O}_{\PP^n}((t-r)d)$, we obtain a long exact sequence
    \begin{align*}
        0 &\lra \mc{P}^r_{p+1}(G)((t-r)d) \lra \mc{P}^r_{p+1}(H^0(E_p(d+p)))((t-r)d-r(d+p)) \lra \cdots   \\
        &\lra\mc{P}^{r'}_{p+1}(H^0(E_p(d+p))) \otimes \mc{P}^{r-r'}_p(E_p)((t-r)d-r'(d+p)) \lra \cdots
        \lra\mc{P}^r_p(E_p) ((t-r)d) \lra 0.
    \end{align*}
    Since $r \ge s(n,p) -\left\lfloor \frac{n}{p+1}\right\rfloor + 1 $, we have 
    \[rp+r = r(p+1) \ge n+1.\]
    By Grothendieck vanishing, we have
    \[H^{rp+r+1}(\mc{P}^r_{p+1}(G)((t-r)d)) = 0,\]
    \[H^{rp+r}(\mc{P}^r_{p+1}(H^0(E_p(d+p)))((t-r)d-r(d+p))) = 0.\]
    By Lemma~\ref{lem:hypercohomology1}, it suffices to show that
    \[H^{rp+r'}(\mc{P}^{r-r'}_p(E_p)((t-r)d-r'(d+p))) = 0\]
    for all $1 \le r' \le r-1$. Since $rp+r' \ge (r-r')p+2$, by Lemma~\ref{lem:vanishing1} and Grothendieck vanishing, all of these cohomology groups vanish except possibly $H^n$, i.e. it suffices to show 
    \[H^n(\mc{P}^{r+rp-n}_p(E_p)((t-r)d - (n-rp)(d+p))) = 0.\]
    By Lemma~\ref{lem:inequal}, we have
    \[(t-r)d - (n-rp)(d+p) \ge (r +rp - n)p - n \quad \text{and}\quad (r +rp - n)p +2 \le n.\]
    Hence, by Lemma~\ref{lem:vanishing2'}, we have the desired vanishing. This concludes the proof.
\end{proof}

\subsection{Powers of $I$} Recall from (\ref{eq:complexA}) that if the resolution of $I$ is virtually linear for $p$ steps, we have a complex
\[\mc{A}^\bullet \colon 0 \lra V_p\otimes \mc{O}_{\PP^n}(-p) \overset{\partial_p}{\lra} \cdots \overset{\partial_2}{\lra}  V_1 \otimes \mc{O}_{\PP^n}(-1) \overset{\partial_1}{\lra} V \otimes \mc{O}_{\PP^n} \lra 0, \]
where the cohomology sheaves are given by
\[\mc{H}^i (\mc{A}^\bullet) = \begin{cases}
    \mc{O}_{\PP^n}(d) & i = 0,\\
    E_p & i = -p,\\
    0 &\text{otherwise.}
\end{cases}\]
We set $\mc{S}^\bullet_r = \Sym^r(\mc{A}^\bullet)$. By (\ref{eq:cohomologySheavesSym}), its cohomology sheaves are given by
\[\mc{H}^i (\mc{S}^\bullet_r) = \begin{cases}
    \mc{P}^{i'}_p(E_p) \otimes \mc{O}_{\PP^n}((r-i')d) & i = -pi',\\
    0 &\text{otherwise.}
\end{cases}\]
For all $j\ge 0$, consider the map
\[H^0(\Sym^r(\partial_0) \otimes \mc{O}_{\PP^n}(j))\colon \Sym^r V \otimes H^0(\mc{O}_{\PP^n}(j)) = \Sym^r V \otimes S_j \lra H^0(\mc{O}_{\PP^n}(rd+j)) = S_{rd+j}.\]
By Lemma~\ref{lem:hypercohomology2}, we have
\begin{equation}\label{eq:H1hilbertfunction}
    \mathbb{H}^1(\widehat{\mc{S}}^\bullet_r\otimes \mc{O}_{\PP^n}(j)) = \coker(H^0(\Sym^r(\partial_0)\otimes \mc{O}_{\PP^n}(j))) = (S/I^r)_{rd+j}.
\end{equation}
In particular, this implies that
\begin{equation}\label{eq:equalityiffH1vanishing}
    I^r = \m^{rd} \quad \iff \quad \mathbb{H}^1(\widehat{\mc{S}}^\bullet_r) = 0.
\end{equation}
\begin{theorem}\label{thm:powers}
    Suppose the resolution of $I$ is virtually linear for $p$ steps. If $E_p(d+p)$ is globally generated, then $I^{t} = \m^{td}$ for all $t \ge s(n,p)$.
\end{theorem}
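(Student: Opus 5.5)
By (\ref{eq:equalityiffH1vanishing}), it suffices to show $\mathbb{H}^1(\widehat{\mc{S}}^\bullet_t)=0$ for every $t\ge s(n,p)$. I would compute this with the second spectral sequence $\prescript{II}{}E$ of $\widehat{\mc{S}}^\bullet_t$. Its $E_2$-page is $H^q(\mc{H}^{i}(\widehat{\mc{S}}^\bullet_t))$. Augmenting kills $\mc{H}^0$, so the only cohomology sheaves are
\[\mc{H}^{-pr}(\widehat{\mc{S}}^\bullet_t)=\mc{P}^r_p(E_p)((t-r)d),\qquad 1\le r\le t.\]
The term contributing to total degree $1$ from $\mc{H}^{-pr}$ is therefore $H^{rp+1}(\mc{P}^r_p(E_p)((t-r)d))$. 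It is enough to show all of these vanish for $1\le r\le t$; then $\prescript{II}{}E_2^{-pr,rp+1}=0$ for all $r$, and $\mathbb{H}^1=0$. The work is in splitting the range of $r$ into two parts, mirroring pieces (1) and (2) of (\ref{eq:Sn}).

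For large $r$, namely $s(n,p)-\lfloor n/(p+1)\rfloor+1\le r\le t$, the vanishing is exactly Lemma~\ref{lem:vanishing2}. This is where global generation of $E_p(d+p)$ enters, and it is the higher-syzygy piece (2).

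For small $r$, namely $1\le r\le s(n,p)-\lfloor n/(p+1)\rfloor$, I would not attack each cohomology sheaf directly. Instead I would compare with a lower power, using the regularity bound of Lemma~\ref{lem:regofpowers}. A clean way is to twist: by (\ref{eq:H1hilbertfunction}), $\mathbb{H}^1(\widehat{\mc{S}}^\bullet_{r'}(j))=(S/I^{r'})_{r'd+j}$. Lemma~\ref{lem:regofpowers} together with (\ref{eq:regOfIimpliesvanishing}) gives $(S/I^{r'})_{m}=0$ for $m\ge(\lfloor n/(p+1)\rfloor+r')d$.

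The plan for this range is to truncate $\mc{A}^\bullet$ inside $\Sym^t$. Write $\Sym^t(\mc{A}^\bullet)$ with a filtration by the power of the degree-$0$ term $V\otimes\mc{O}$. Equivalently, view $(S/I^t)_{td}$ as the cokernel of $\Sym^{t-r'}V\otimes \Sym^{r'}V\to S_{td}$, passing through $\Sym^{t-r'}V\otimes (I^{r'})_{r'd}$. Take $r'=t-\lfloor n/(p+1)\rfloor$ and apply the regularity bound to $I^{r'}$: it shows $(I^{r'})_{m}=S_m$ in degrees $m\ge(\lfloor n/(p+1)\rfloor+r')d=td$. Hence the relevant cohomology vanishes. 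This vanishing cannot come from the regularity alone, though; it needs the separate high-$r$ input, since the regularity bound only says $I^{r'}$ agrees with $\m^{r'd}$ in degrees at least $td$, not $r'd$.

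Concretely, I would argue inductively on $t$ as follows.
\begin{enumerate}
\item For the hypercohomology of $\widehat{\mc{S}}^\bullet_t$, the $E_2$ terms with small $r$ coincide with twisted terms of a complex whose $\mathbb{H}^1$ is a graded piece of $S/I^{r'}$ in degree $\ge \reg(I^{r'})$.
\item That graded piece vanishes by Lemma~\ref{lem:regofpowers}, i.e. the twist $(t-r)d$ is large enough that $H^{rp+1}(\mc{P}^r_p(E_p)((t-r)d))$ appears in $\mathbb{H}^1(\widehat{\mc{S}}^\bullet_r((t-r)d))$.
\item For this I would show that $\prescript{II}{}E$ for $\widehat{\mc{S}}^\bullet_r((t-r)d)$ has no differentials hitting or leaving the slot $H^{rp+1}(\mc{P}^r_p(E_p)((t-r)d))$. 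Here the top cohomology sheaf of $\widehat{\mc{S}}_r$ sits in the most negative degree $-rp$, so only outgoing differentials to lower $r''$ slots need control.
\item Those outgoing differentials are handled by Lemma~\ref{lem:vanishing1}, which kills middle cohomology of $\mc{P}^{r''}_p(E_p)$ in degrees $r''p+2\le l\le n-1$.
\end{enumerate}
The cutoff $t\ge s(n,p)$ is precisely what makes $(t-r)d\ge \lfloor n/(p+1)\rfloor d$ for all small $r$. This is the reason for the definition of $s(n,p)$.

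The main obstacle is step (3): certifying that $H^{rp+1}$ injects into $\mathbb{H}^1$ of the twisted complex for all small $r$. I expect to need both Lemma~\ref{lem:vanishing1} and a Serre-vanishing check for the $H^n$ terms. The $H^n$ terms are where the edge cases $n\equiv p \pmod{p+1}$ in $s(n,p)$ should show up.
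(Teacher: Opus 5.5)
Your handling of the range $k<r\le t$, where $k=s(n,p)-\lfloor n/(p+1)\rfloor$, via Lemma~\ref{lem:vanishing2} is fine. The small range $2\le r\le k$, however, has a genuine gap.

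You want each $H^{rp+1}(\mc{P}^r_p(E_p)((t-r)d))$ to vanish by injecting it into $\mathbb{H}^1(\widehat{\mc{S}}^\bullet_r((t-r)d))=(S/I^r)_{td}=0$. In step~(3) you have the differentials backwards. We have $\prescript{II}{}d_i\colon\prescript{II}{}E_i^{a,b}\to\prescript{II}{}E_i^{a-i+1,b+i}$, and $-rp$ is the lowest degree with a nonzero cohomology sheaf. So the differentials leaving the slot $(-rp,rp+1)$ vanish for free. What can destroy injectivity are the incoming differentials. Their sources are the total-degree-$0$ terms $H^{r''p}(\mc{P}^{r''}_p(E_p)((t-r'')d))$ with $1\le r''<r$. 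Lemma~\ref{lem:vanishing1} only kills $H^l$ for $l\ge r''p+2$, which are the targets you would meet if the differentials ran the other way. So it says nothing about these sources.

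Equivalently, by the long exact sequence of (\ref{eq:symBtoAaug}), the kernel of $H^{rp+1}(\mc{P}^r_p(E_p)((t-r)d))\to\mathbb{H}^1(\widehat{\mc{S}}^\bullet_r((t-r)d))$ is the image of $\mathbb{H}^0(\widehat{\mc{S}}^\bullet_{r-1}((t-r+1)d))$. That space consists of the degree-$td$ relations among the generators $\Sym^{r-1}V$ of $I^{r-1}$, modulo those induced by linear syzygies. No lemma in the paper makes it vanish.

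The regularity bound does not help either. $\mathbb{H}^1(\widehat{\mc{S}}^\bullet_k((t-k)d))=0$ only says that the small-$r$ terms die by $E_\infty$, not that they are zero on $E_2$. So your reduction to the vanishing of every $E_2$-term of total degree $1$ asks for more than anything available here proves.

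The missing idea is to dispose of all $r\le k$ at once rather than term by term. Twist (\ref{eq:symBtoAaug}) by $(t-r)d$. For $k<r\le t$ we have $\mathbb{H}^1(\Sym^r(\mc{B}^\bullet)((t-r)d))=H^{rp+1}(\mc{P}^r_p(E_p)((t-r)d))=0$, so each map $\mathbb{H}^1(\widehat{\mc{S}}^\bullet_r((t-r)d))\to\mathbb{H}^1(\widehat{\mc{S}}^\bullet_{r-1}((t-r+1)d))$ is injective. Composing these gives $\mathbb{H}^1(\widehat{\mc{S}}^\bullet_t)\hookrightarrow\mathbb{H}^1(\widehat{\mc{S}}^\bullet_k((t-k)d))=(S/I^k)_{td}$ by (\ref{eq:H1hilbertfunction}). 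This is zero by Lemma~\ref{lem:regofpowers}, since $\reg(I^k)\le s(n,p)d\le td$.

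This is exactly the paper's proof. The small-$r$ contributions are absorbed into $\mathbb{H}^1$ of the quotient complex and identified with a graded piece of $S/I^k$, without ever being shown to vanish individually. Stopping at your $r'=t-\lfloor n/(p+1)\rfloor\ge k$ instead of $k$ works equally well.

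Finally, the $H^n$ checks and the case $n\equiv p\pmod{p+1}$ belong to the large-$r$ range, inside Lemma~\ref{lem:vanishing2} via Lemmas~\ref{lem:vanishing2'} and~\ref{lem:inequal}. They play no role in the small range.
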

\begin{proof}
    By (\ref{eq:equalityiffH1vanishing}), it suffices to show that $\mathbb{H}^1(\widehat{\mc{S}}^\bullet_t) = 0$ for all $t \ge s(n,p)$. Consider the natural surjection $\partial_0 \colon \mc{A}^\bullet \lra \mc{O}_{\PP^n}(d)$, where $\mc{O}_{\PP^n}(d)$ is regarded as a complex concentrated in cohomological degree $0$. Set $\mc{B}^\bullet = \ker (\partial_0)$. We get a short exact sequence of complexes
    \[0 \lra \mc{B}^\bullet \lra \mc{A}^\bullet \lra \mc{O}_{\PP^n}(d) \lra 0.\]
    Since $\mc{O}_{\PP^n}(d)$ is a line bundle, by Lemma~\ref{lem:lesSym}, we get a short exact sequence
    \begin{equation}\label{eq:symBtoA}
        0 \lra \Sym^r(\mc{B}^\bullet) \lra \mc{S}^\bullet_r \lra \mc{S}^\bullet_{r-1}(d) \lra 0.
    \end{equation}
    Since $\mc{H}^i(\mc{B}^\bullet)$ is nonzero and given by $E_p$ if and only if $i = -p$, by (\ref{eq:cohomologySheavesSym}), we have
    \[\mc{H}^i(\Sym^r(\mc{B}^\bullet)) = \begin{cases}
        \mc{P}^r_pE_p & i = -rp,\\
        0 &\text{otherwise}.
    \end{cases}\]
    In particular, for every $j\in \mathbb{Z}$, we have
    \begin{equation}\label{eq:hypercohomologyB}
        \mathbb{H}^1(\Sym^r(\mc{B}^\bullet) \otimes \mc{O}_{\PP^n}(j)) = H^{rp+1}(\mc{P}^r_pE_p(j)).
    \end{equation}
    It follows from the fact $\mc{H}^0(\Sym^r(\mc{B}^\bullet)) = \mc{H}^1(\Sym^r(\mc{B}^\bullet)) = 0$ that (\ref{eq:symBtoA}) extends to a short exact sequence of augmented complexes 
    \begin{equation}\label{eq:symBtoAaug}
        0 \lra \Sym^r(\mc{B}^\bullet) \lra \widehat{\mc{S}}^\bullet_r \lra \widehat{\mc{S}}^\bullet_{r-1} \otimes \mc{O}_{\PP^n}(d) \lra 0.
    \end{equation}
    By Lemma~\ref{lem:vanishing2} and (\ref{eq:hypercohomologyB}), we have
    \[\mathbb{H}^1(\Sym^r(\mc{B}^\bullet) \otimes \mc{O}_{\PP^n}((t-r)d)) = 0\]
    for all $s(n,p) - \left\lfloor \frac{n}{p+1} \right\rfloor+ 1 \le r \le t$. Let $k = s(n,p) - \left\lfloor \frac{n}{p+1}\right\rfloor$. Consider the long exact sequence in hypercohomology of (\ref{eq:symBtoAaug}). We have
    \[0 = \mathbb{H}^1(\Sym^r(\mc{B}^\bullet) \otimes \mc{O}_{\PP^n}((t-r)d)) \lra \mathbb{H}^1(\widehat{\mc{S}}^\bullet_r\otimes \mc{O}_{\PP^n}((t-r)d)) \lra \mathbb{H}^1(\widehat{\mc{S}}^\bullet_{r-1}\otimes \mc{O}_{\PP^n}((t-r+1)d))\lra \cdots\]
    Hence, this gives a sequence of injections
    \begin{equation}\label{eq:equalityHyper}
        \mathbb{H}^1(\widehat{\mc{S}}^\bullet_t) \xhookrightarrow{} \mathbb{H}^1(\widehat{\mc{S}}^\bullet_{t-1}\otimes \mc{O}_{\PP^n}(d)) \xhookrightarrow{} \cdots \xhookrightarrow{} \mathbb{H}^1(\widehat{\mc{S}}^\bullet_{k}\otimes \mc{O}_{\PP^n}((t-k)d)).
    \end{equation}
    It follows from Lemma~\ref{lem:regofpowers} that
    \[\reg(I^k) \le \left(\left\lfloor \frac{n}{p+1}\right\rfloor + k \right)d = s(n,p)d \le td.\]
    In particular, we have
    \[\mathbb{H}^1(\widehat{\mc{S}}^\bullet_t) \overset{(\ref{eq:equalityHyper})}{\subseteq} \mathbb{H}^1(\widehat{\mc{S}}^\bullet_{k}\otimes \mc{O}_{\PP^n}((t-k)d)) \overset{(\ref{eq:H1hilbertfunction})}{=} (S/I^k)_{td} \overset{(\ref{eq:regOfIimpliesvanishing})}{=} 0.\]
    This concludes the proof.
\end{proof}
We now prove that the global generation hypothesis from Theorem~\ref{thm:powers} is satisfied if the resolution of $I$ is genuinely linear for $p$ steps. Before proceeding to the proof, we briefly discuss the case when $p = 0$. Suppose $I$ is an $\m$-primary ideal generated in degree $d$. We have a natural surjection
\[V \otimes \mc{O}_{\PP^n}(-d) \lra \mc{O}_{\PP^n}.\]
The associated Koszul complex gives a resolution of the syzygy bundle $E_0 = M_V$ given by
\[0 \lra \bigwedge^{\dim V}V \otimes \mc{O}_{\PP^n}(-(\dim V)d) \lra \cdots \lra \bigwedge^2 V \otimes \mc{O}_{\PP^n}(-2d) \lra E_0(-d) \lra 0.\]
In particular, the vector bundle $E_0(d)$ is globally generated. The following theorem is a natural generalization of this fact to higher syzygy bundles. We assume $1 \le p \le n-1$.
\begin{theorem}\label{lem:gg}
    If the resolution of $I$ is linear for $p$ steps, then $E_p(d+p)$ is globally generated.
\end{theorem}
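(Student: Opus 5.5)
The plan is to work with the minimal graded free resolution $F_\bullet$ of $S/I$ directly and to exploit the fact that every element of $I$ annihilates $S/I$. Index it as $F_0 = S$, $F_1 = V\otimes S(-d)$, and $F_{i+1} = V_i \otimes S(-d-i)$ for $1 \le i \le p$. This uses the hypothesis that the resolution of $I$ is linear for $p$ steps. Let $Z = \ker(F_{p+1} \to F_p)$ be the next syzygy module, which is the image of $F_{p+2}$.

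\emph{Step 1: identify $E_p$ with $Z$.} Since sheafification is exact and, through step $p$, the linear strand (\ref{eq:linearStrand}) coincides with the minimal resolution, the complex $\mc{A}^\bullet$ is the sheafification of $F_{p+1}\to\cdots\to F_1\to S$ twisted by $d$. I would then check that $E_p = \widetilde{Z}(d)$, so that
\[E_p(d+p) = \widetilde{Z}(2d+p).\]
It therefore suffices to show that the sheaf $\widetilde{Z}$ is generated by the images of the elements of $Z_{2d+p}$.

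\emph{Step 2: a null-homotopy for multiplication by $f \in V = I_d$.} Multiplication by $f$ is the zero map on $S/I$. By the comparison theorem it lifts to the zero chain map, so multiplication by $f$ on $F_\bullet$ is null-homotopic. Since everything is graded, I can choose an $S$-linear homotopy $h$ with $h_i\colon F_i \to F_{i+1}$ homogeneous of degree $d$ and
\[\partial h + h \partial = f\cdot \mathrm{id}.\]
This step needs care only to keep the homotopy graded; the degree bookkeeping should be routine.

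\emph{Step 3: multiplying a syzygy by $f$ lands in the submodule generated by degree $2d+p$.} Let $\{e\}$ be a homogeneous basis of $F_{p+1}$; by linearity of the first $p$ steps, every $e$ has degree $d+p$. Take $z = \sum a_e e \in Z$. Then
\[fz = \sum a_e\bigl(\partial h(e) + h\partial(e)\bigr) = \sum a_e\, \partial h(e) + h(\partial z) = \sum a_e\, \partial h(e).\]
Each $\partial h(e)$ lies in $Z$, since $\partial^2 = 0$, and has degree $(d+p)+d = 2d+p$. Hence $fZ \subseteq S\cdot Z_{2d+p}$.

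\emph{Step 4: localize.} Because $I$ is $\m$-primary, the open sets $D(f)$ for $f \in V$ cover $\PP^n$. On $D(f)$, the element $f$ is a unit, so Step 3 shows that $\widetilde{Z}|_{D(f)}$ is generated by the sections coming from $Z_{2d+p}$. Therefore $\widetilde{Z}(2d+p) = E_p(d+p)$ is globally generated.

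The main obstacle I expect is Step 1, namely the indexing and twist conventions: making sure that $E_p$ is exactly $\widetilde{Z}(d)$ with $Z$ the syzygy module just beyond the linear part, so that the generators of $F_{p+1}$ have degree $d+p$ and the resulting twist is precisely $d+p$. The crux of Step 3 is the identity $h(\partial z) = 0$. It lets us avoid any bound on the degrees of the generators of $F_{p+2}$, and this is why the subadditivity condition (\ref{eq:subadd}) is not needed.
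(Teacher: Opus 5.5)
Your proposal is correct and follows essentially the same route as the paper: both use a graded null-homotopy $h$ for multiplication by elements of $V = I_d$ on the minimal resolution of $S/I$, together with the identity $h(\partial z) = 0$ for $z \in Z$, to show that multiplying a syzygy by $f$ lands in the span of the degree-$(2d+p)$ syzygies $\partial h(e)$. The only difference is in how the argument is packaged and concluded. The paper uses a single homotopy for all of $V$ and factors the surjection $V \otimes E_p(p) \to E_p(d+p)$ through the trivial bundle $V \otimes V_p \otimes \mc{O}_{\PP^n}$, whereas you work one $f$ at a time and cover $\PP^n$ by the open sets $D(f)$.
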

\begin{proof}
    Consider the minimal free resolution of $S/I$,
    \[F^\bullet \colon 0 \lra F^{-n-1} \overset{\delta^{-n-1}}{\lra} \cdots \overset{\delta^{-p-3}}{\lra} F^{-p-2} \overset{\delta^{-p-2}}{\lra} V_p \otimes S(-d-p) \overset{\delta^{-p-1}}{\lra} \cdots \lra V \otimes S(-d) \overset{\delta^{-1}}{\lra} S \lra 0.\]
    The $(p+1)$-st syzygy module is given by
    \[\operatorname{Syz}_{p+1} = \ker(V_p \otimes S(-d-p) \overset{\delta^{-p-1}}{\lra} V_{p-1}\otimes S(-d-p+1)).\]
    Since the resolution of $I$ is linear for $p$ steps, $\widetilde{\operatorname{Syz}}_{p+1} \simeq E_p (-d)$. Since the map
    \[\partial_0 \colon V \otimes \mc{O}_{\PP^n} \lra \mc{O}_{\PP^n}(d)\]
    is surjective, tensoring with any vector bundle preserves surjectivity. In particular, the map
    \begin{equation}\label{eq:surjective}
        V \otimes E_p(p)  \lra E_p (d+p)
    \end{equation}
    is surjective. We show that the natural surjection (\ref{eq:surjective}) factors through the trivial vector bundle $V \otimes V_p \otimes \mc{O}_{\PP^n}$. The global generation of $E_p(d+p)$ follows once we establish this factorization.
    We define $\phi \colon V \otimes_\CC S/I \otimes_S S(-d) \lra S/I$ to be the tensor product between $\delta^{-1}$ and the identity map on $S/I$. By \cite{eisenbud-CA}*{Lemma~20.3}, $\phi$ lifts to a map of complexes 
    \[\Phi \colon V \otimes_\CC F^\bullet \otimes_S S(-d) \lra F^\bullet\]
    where the maps are given by the tensor product with $\delta^{-1}$. Since multiplication by $I$ induces the zero map on $S/I$, $\phi$ is the zero map and $\Phi$ is null-homotopic. By the definition of null-homotopy, there exists a family of maps 
    \[h^{-i} \colon V \otimes_\CC F^{-i} \otimes_S S(-d) \lra F^{-i-1} \quad \text{such that} \quad \delta^{-i-1} \circ h^{-i} + h^{-i+1} \circ (\operatorname{id}_{V} \otimes \delta^{-i}) = \delta^{-1} \otimes \operatorname{id}_{F^{-i}}.\]
    Since $F^\bullet$ is a complex and $\delta^{-p-1} \circ \delta^{-p-2} = 0$, the image of the map 
    \[\delta^{-p-2} \circ h^{-p-1} \colon V \otimes_\CC V_p \otimes_\CC S(-2d-p) \lra V_p \otimes_\CC S(-d-p)\]
    is contained in $\operatorname{Syz}_{p+1}$. Consider the composition
    \begin{equation}\label{eq:composition}
        V \otimes_\CC \operatorname{Syz}_{p+1} \otimes_S S(-d) \lra V \otimes_\CC V_p \otimes_\CC S(-2d-p) \overset{\delta^{-p-2} \circ h^{-p-1}}{\lra} \operatorname{Syz}_{p+1}.
    \end{equation}
    Since $\delta^{-p-2} \circ h^{-p-1} = \delta^{-1} \otimes \operatorname{id}_{V_p} - h^{-p} \circ (\operatorname{id}_V \otimes \delta^{-p-1})$ and $\operatorname{Syz}_{p+1} = \ker(\delta^{-p-1})$, the composition (\ref{eq:composition}) is given by $\delta^{-1} \otimes \operatorname{id}_{\operatorname{Syz}_{p+1}}$. Sheafifying (\ref{eq:composition}) and tensoring with $\mc{O}_{\PP^n}(2d+p)$, we get
    \[V \otimes E_p (p) \lra V \otimes V_p \otimes \mc{O}_{\PP^n} \lra E_p(d+p) \]
    where the composition is surjective by (\ref{eq:surjective}). Hence, $V \otimes V_p \otimes \mc{O}_{\PP^n} \lra E_p(d+p)$ is surjective and $E_p(d+p)$ is globally generated.
\end{proof}
\begin{corollary}\label{cor:thmA}
    If the resolution of $I$ is linear for $p$ steps, then $I^t = \m^{td}$ for all $t \ge s(n,p)$.
\end{corollary}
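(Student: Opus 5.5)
The plan is to combine Theorem~\ref{thm:powers} with Theorem~\ref{lem:gg}, handling separately the boundary value $p = n$, which Theorem~\ref{lem:gg} excludes.

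\textbf{Main case $1 \le p \le n-1$.}

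\begin{enumerate}
\item Since sheafification is exact, a resolution that is linear for $p$ steps is in particular virtually linear for $p$ steps. Hence the complex $\mc{A}^\bullet$ of (\ref{eq:complexA}) and the bundles $E_i$ are defined.
\item The sheafified $(p+1)$-st syzygy module identifies with $E_p(-d)$, as observed in the proof of Theorem~\ref{lem:gg}, and that theorem then shows that $E_p(d+p)$ is globally generated.
\item With both hypotheses of Theorem~\ref{thm:powers} in hand, that theorem gives $I^t = \m^{td}$ for all $t \ge s(n,p)$.
\end{enumerate}

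\textbf{Boundary case $p = n$.} First, if the resolution is linear for $n$ steps, then $I = \m^d$. Indeed, $S/I$ has finite length, so its resolution has length $n+1$. The last module is dual to the socle, and linearity through step $n$ together with the self-duality/regularity bound $\reg(I) \le \lceil (n+1)/(n+1)\rceil (d-1) + 1 = d$ from Lemma~\ref{lem:regofpowers} (take $r=1$) gives $\reg(I) \le d$. Then (\ref{eq:regOfIimpliesvanishing}) yields $(S/I)_d = 0$, so $I = \m^d$.

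Second, $s(n,n) = 2\lfloor n/(n+1)\rfloor + 1 = 1$, because $n \equiv n \bmod (n+1)$. So the required statement is $I^t = \m^{td}$ for all $t \ge 1$, which is immediate from $I = \m^d$.

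The only step needing care is this boundary case $p = n$. Everything else is a direct chaining of the two preceding theorems.
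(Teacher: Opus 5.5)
Your proposal is correct and follows the paper's proof, which simply chains Theorem~\ref{thm:powers} with Theorem~\ref{lem:gg}. Your separate treatment of $p=n$ is sound but not strictly needed: for $p=n$ the whole resolution of $I$ is linear (so $\reg(I)=d$ follows directly from the definition), its sheafification is exact, hence $E_n=0$, $E_n(d+n)$ is trivially globally generated, and Theorem~\ref{thm:powers} applies as it stands.
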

\begin{proof}
    The result follows directly from applying Theorem~\ref{thm:powers} and Theorem~\ref{lem:gg}.
\end{proof}
\begin{proof}[Proof of Theorem~\ref{mainThm:A}]
    Since $s(n,1) = n$, by Corollary~\ref{cor:thmA}, we have that $I^t = \m^{td}$ for all $t \ge n$. It follows directly from (\ref{eq:lowerbound}) that the number of generators $\mu(I) \ge nd + 1$. Let $X$ be the image of the morphism
    \[\phi_V \colon \PP^n \lra \PP(V).\]
    The only remaining claim is that $\reg(X) \le n$. Write $R = \Sym(V)$, the homogeneous coordinate ring of $\PP(V)$, and $S^{(d)} = \bigoplus_{i\ge0} S_{id}$ the Veronese subring of $S$. It follows that the homogeneous coordinate ring $R_X$ of $X$ is given by the image of the natural map $R \lra S^{(d)}$, and that $(R_X)_i = (I^i)_{id}$. Consider the short exact sequence
    \[0 \lra R_X \lra S^{(d)} \lra C \lra 0.\]
    Since $I^t = \m^{td}$ for all $t \ge n$, we have $C_{t} = 0$ for all $t \ge n$. In particular, $C$ is an $S$-module of finite length, and by (\ref{eq:regFL}), $\reg(C) \le n - 1$. On the other hand, $\reg(S^{(d)}) =n - \left\lfloor \frac{n}{d}\right\rfloor$ by \cite{CM}*{{Theorem~1.4}}. Hence, by \cite{DE-syzygies}*{Exercise~4E.2}, we have
    \[\reg(X) = \reg(R_X) \le \max\{\reg(S^{(d)}),\reg(C)+1\} \le \max\{n - \left\lfloor \frac{n}{d}\right\rfloor,n\} = n.\]
    This concludes the proof.
\end{proof}

\begin{example}\label{ex:sharp>2}
    Assume $n \ge 2$. Recall a family of linearly presented $\m$-primary monomial ideals $J_{n,d}$ introduced in \cite{DaoEisenbud}*{Example~3.2(2)}. We show that for every $d \ge 3$, we have $J_{n,d}^{n-1} \ne \m^{(n-1)d}$. Thus, Theorem~\ref{mainThm:A} is sharp when $d \ge 3$. Define
    \[J_{n,d} \coloneqq \begin{cases}
        (x_0^{k+1},x_1^{k+1},\dots,x_n^{k+1}) \cap \m^{d} &\text{if }d=2k+1\\
        (x_0^{k+2},x_1^{k+1},\dots,x_n^{k+1}) \cap \m^{d} &\text{if }d=2k+2.
    \end{cases}\]
    Dao and Eisenbud proved that $J_{n,d}$ is linearly presented \cite{DaoEisenbud}*{Example~3.2(2)}. Note that for $k\ge 1$, we have
    \begin{align*}
        x_0^{(n-1)(k+1)-1} x_1^k \cdots x_{n-1}^kx_n \notin J_{n,d}^{n-1} &\quad\text{ if }d=2k+1,\\
        x_0^{(n-1)(k+2)-1} x_1^k \cdots x_{n-1}^kx_n \notin J_{n,d}^{n-1} &\quad\text{ if }d=2k+2.
    \end{align*}
    This shows $J_{n,d}^{n-1} \ne \m^{(n-1)d}$. We remark that monomial examples will not work for $d = 2$. If $I$ is a linearly presented $\m$-primary monomial ideal generated by quadrics, then $I = \m^2$ by \cite{DaoEisenbud}*{Theorem~3.1}.
\end{example}
\begin{example}\label{ex:sharp=2}
    Assume that $n \ge 2$. We construct a family of linearly presented $\m$-primary ideals $I_n$ such that $I_n$ is generated by quadrics and $I_n^{n-1} \ne \m^{2n-2}$. Let
    \[I_n \coloneqq (f,g) + J, \text{ where }f=\sum_{i = 0}^n x_i^2, g = \sum_{i = 0}^n ix_i^2,\text{ and }J = (x_i x_j)_{0 \le i < j \le n}.\]
    We first show that $I_n$ is $\m$-primary by showing $\m^3 \subseteq I_n$. Indeed, we have
    \begin{align*}
        x_{i_1}x_{i_2}x_{i_3} \in J \subseteq I_n &\quad\text{for all }0 \le i_1 < i_2 < i_3 \le n,\\
        x_{i_1}^2x_{i_2} \in J \subseteq I_n &\quad\text{for all } i_1\ne i_2,\\ 
        x_{i_1}^3 = x_{i_1}f - \sum_{i\ne i_1}x_{i_1}x_{i}^2 \in I_n &\quad\text{for all }0\le i_1\le n.
    \end{align*}
    We now show that $I_n$ is linearly presented. Since $J$ is the edge ideal associated to the complete graph $K_{n+1}$, by \cite{Froberg}*{Theorem~1}, $J$ has a linear resolution. Consider the linear relations
    \[r_{i}(e_f,e_g,e_{0,1},e_{0,2},\dots,e_{n-1,n}) = (ix_i) e_f - (x_i)e_g + \sum_{j<i}((j-i)x_j) e_{j,i} + \sum_{j>i}((j-i)x_j) e_{i,j}.\]
    We show that the set $\{r_0,\dots,r_n\}$ along with the linear syzygies of $J$ generates the first syzygy module of $I_n$. Let $r = ae_f - be_g + \sum_{0 \le i < j \le n}c_{i,j}e_{i,j}$ be a relation on $I_n$. Without loss of generality, we may assume $a,b,c_{i,j}$ are homogeneous and 
    \[a = a_0x^k_0 + a_1x^k_1 + \cdots + a_n x_n^k \quad b = b_0 x_0^k + b_1x_1^k + \cdots + b_n x_n^k.\]
    Since $[x_i^{k+2}] r(f,g,e_{0,1},\dots,e_{n-1,n}) = a_i - ib_i = 0$, there exist coefficients $\widetilde{c}_{i,j}$ such that
    \[r = \sum_{i = 0}^n b_ix_i^{k-1}r_i + \sum_{0 \le i < j \le n} \widetilde{c}_{i,j}e_{i,j}.\]
    Combining this with the fact that $J$ has linear syzygies, we have that $I_n$ is linearly presented. Finally, we show that $I_n^{n-1} \ne \m^{2n-2}$. Since $I^{n-1} = (f,g)^{n-1} + J I^{n-2} \subseteq (f,g)^{n-1} + J$, it suffices to show that $(\overline{f},\overline{g})^{n-1}_{2n-2}$ does not span $R_{2n-2}$ where $R = S/J$. Indeed, the dimension of $R_{2n-2} = \operatorname{span}\{\overline{x}_0^{2n-2},\overline{x}_1^{2n-2},\dots,\overline{x}_n^{2n-2}\}$ is $n+1$, which is strictly greater than the number of generators $\mu((\overline{f},\overline{g})^{n-1}) = n$. Hence, $I_n^{n-1} \ne \m^{2n-2}$.
\end{example}

\section{Number of minimal generators}
\subsection{Restriction to linear subspaces}
Recall that $W = S_1 = H^0(\mc{O}_{\PP^n}(1))$, and let $\Gr(i,W^\vee)$ be the Grassmannian of $i$-dimensional subspaces of $W^\vee$. We define the \textbf{rank variety} of symmetric tensors of rank $\le r$ to be
\[Y_r = \{F \in \Sym^dW^\vee \colon \exists\, E \in \Gr(r,W^\vee), \quad F \in \Sym^dE \subseteq \Sym^d W^\vee\}.\]
For a general theory of rank varieties, we refer the reader to \cite{weyman}*{Chapter~7}.
\begin{theorem}\label{thm:geometricBound}
    If the resolution of $I$ is virtually linear for $p$ steps, then $\mu(I) \ge \binom{p+d}{d} + (p+1)(n-p)$.
\end{theorem}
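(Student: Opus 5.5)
We argue by a dimension count on the rank variety $Y_{p+1}$, following the introduction's remark that every $p$-plane $\Lambda\subseteq\PP^n$ maps to $\PP(V)$ through its degree $d$ Veronese embedding. Dually, $V\subseteq S_d=\Sym^d W$, so $V^\perp\subseteq \Sym^d W^\vee$ has dimension $\binom{n+d}{d}-\mu(I)$. A $p$-plane $\Lambda=\PP(W/E^\perp)$ corresponds to $E\in\Gr(p+1,W^\vee)$, and $\Sym^d E\subseteq\Sym^dW^\vee$ is the dual of the restriction map $S_d\to H^0(\mc{O}_\Lambda(d))$. The key claim is:

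\emph{If the resolution of $I$ is virtually linear for $p$ steps, then for every $E\in\Gr(p+1,W^\vee)$ the restriction $V\to H^0(\mc{O}_\Lambda(d))=\Sym^dE^\vee$ is surjective; equivalently $V^\perp\cap\Sym^dE=0$.}

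Granting this, $V^\perp\cap Y_{p+1}=\{0\}$ as cones in $\Sym^dW^\vee$. Hence $\dim V^\perp+\dim Y_{p+1}\le\dim\Sym^dW^\vee$. The variety $Y_{p+1}$ is the image of the total space of the bundle with fibers $\Sym^dE$ over $\Gr(p+1,n+1)$, and this map is generically finite (indeed generically injective) for $d\ge2$. Therefore
\[\dim Y_{p+1}=(p+1)(n-p)+\binom{p+d}{d}.\]
This yields $\mu(I)=\dim V\ge\binom{p+d}{d}+(p+1)(n-p)$. The generic finiteness is standard: a general form in $p+1$ variables of degree $d\ge2$ determines its span of essential variables. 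Alternatively, one can avoid it by only using the upper bound on the fiber dimension.

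To prove the key claim, restrict the complex $\mc{A}^\bullet$ of (\ref{eq:complexA}) to $\Lambda\cong\PP^p$. Since $\mc{A}^\bullet$ is a complex of vector bundles whose sheafified cohomology is $\mc{O}(d)$ in degree $0$ and $E_p$ in degree $-p$, the exact pieces $0\to E_i\to V_i(-i)\to E_{i-1}\to0$ and $0\to E_0\to V\otimes\mc{O}\to\mc{O}(d)\to0$ remain exact after restriction, because all terms are locally free. Surjectivity of $V\to H^0(\mc{O}_\Lambda(d))$ is governed by $H^1(E_0|_\Lambda)$. Chasing through the short exact sequences gives
\[H^1(E_0|_\Lambda)\hookrightarrow H^2(E_1|_\Lambda)\hookrightarrow\cdots\hookrightarrow H^{p+1}(E_p|_\Lambda),\]
using $H^k(\mc{O}_\Lambda(-i))=0$ for $1\le k\le p-1$ and all $i$, and also for $k=p$ when $i\le p$. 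The last group $H^{p+1}(E_p|_\Lambda)$ vanishes since $\dim\Lambda=p$. Hence $H^1(E_0|_\Lambda)=0$, as desired. The only delicate point is the top step, which requires $H^p(\mc{O}_\Lambda(-i))=0$ for $i\le p$; this holds since $-i\ge -p>-p-1$.

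The main obstacle is checking these vanishings carefully with the index shifts, especially at the final step, and confirming that virtual linearity suffices: it gives exactness only of sheaves, which is exactly what is used. This is precisely where the virtual setting generalizes the argument of \cite{EHU}. A secondary point to verify is the dimension formula for $Y_{p+1}$ at the boundary case $p=n$, where it reduces to $\dim\Sym^dW^\vee$, giving $V=S_d$ and consistent with $I=\m^d$.
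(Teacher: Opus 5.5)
Your proof is correct and is essentially the paper's argument. You prove surjectivity of $V\to\Sym^d U^\vee$ for every $p$-plane from $H^{p+1}(E_p|_\Lambda)=0$; your chain of injections $H^1(E_0|_\Lambda)\hookrightarrow\cdots\hookrightarrow H^{p+1}(E_p|_\Lambda)$ is just the unrolled form of the paper's hypercohomology computation via Lemma~\ref{lem:hypercohomology2}. You then conclude from $V^\perp\cap Y_{p+1}=\{0\}$ and the dimension of the rank variety, which the paper cites while you verify it directly by generic injectivity via essential variables.

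Drop the remark about ``avoiding'' generic finiteness. A zero-dimensional generic fiber is exactly the upper bound on fiber dimension you need to get the lower bound on $\dim Y_{p+1}$, so that sentence avoids nothing.
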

\begin{proof}
    Since $\mu(I) = \dim(V)$, it suffices to show that $\dim(V) \ge \binom{p+d}{d} + (p+1)(n-p)$.
    For every $U \in \Gr(p+1,W^\vee)$, there is a natural inclusion $\iota_U \colon \PP(U^\vee) \lra \PP(W) = \PP^n$ as a $p$-dimensional linear space. Consider the complex $\iota_U^\ast\left( \widehat{\mc{A}}^\bullet\right)$ given by
    \[0 \lra V_p \otimes \mc{O}_{\PP(U^\vee)}(-p) \lra \cdots \lra V_1 \otimes \mc{O}_{\PP(U^\vee)}(-1) \lra V \otimes \mc{O}_{\PP(U^\vee)} \lra \mc{O}_{\PP(U^\vee)}(d) \lra 0.\]
    The cohomology sheaves of the complex $\iota_U^\ast\left( \widehat{\mc{A}}^\bullet\right)$ can be described as
    \[\mc{H}^i\left( \iota_U^\ast\left( \widehat{\mc{A}}^\bullet\right)\right)=
    \begin{cases}
        \iota_U^\ast E_p & i = -p,\\
        0 &\text{otherwise}.
    \end{cases}\]
    By Lemma~\ref{lem:hypercohomology2} and Grothendieck vanishing, we have
    \[\coker\left(V \lra \Sym^d U^\vee \right) = \mathbb{H}^1\left( \iota_{U}^\ast\left( \widehat{\mc{A}}^\bullet\right)\right) = H^{p+1}(\iota_U^\ast E_p) = 0.\]
    In particular, the natural composition
    \begin{equation}\label{eq:VtoSymd}
        V \lra \Sym^d W \lra \Sym^d U^\vee
    \end{equation}
    is surjective for every $U \in \Gr(p+1,W^\vee)$. Let 
    \[V^\perp = \ker(\Sym^d W^\vee \lra V^\vee).\]
    It follows from (\ref{eq:VtoSymd}) that the set-theoretic intersection between $V^\perp$ and the rank variety $Y_{p+1}$ as affine subvarieties of $\Sym^d W^\vee$ is $\{0\}$. \cite{P}*{Corollary~3.3.6} proves 
    \[\dim(Y_{p+1}) = \binom{p+d}{p} + (p+1)(n-p).\]
    Hence, $\dim(V) = \dim(V^\vee) \ge (p+1)(n-p) + \binom{p+d}{p}$. This concludes the proof.
\end{proof}
\begin{remark}\label{rmk:linearspacemapVeronese}
    We record a geometric consequence of Theorem~\ref{thm:geometricBound}. Let $\phi_V \colon \PP(W) \lra \PP(V)$ be the closed immersion defined by the linear series $(V,\mc{O}_{\PP(W)}(d))$. (\ref{eq:VtoSymd}) says the natural composition
    \[V \lra \Sym^d W \lra \Sym^d U^\vee\]
    is surjective for every $U \in \Gr(p+1,W^\vee)$. This implies that for every $p$-dimensional linear subspace $\PP(U^\vee) \subseteq \PP(W)$, the natural composition
    \[\begin{tikzcd}
    \PP(U^\vee) \arrow[r] \arrow[rd, "|\mc{O}_{\PP(U^\vee)}(d)|"'] & \PP(W) \arrow[r, "\phi_V"] & \PP(V) \\
                                                         & \PP(\Sym^d U^\vee) \arrow[ru]   &       
    \end{tikzcd}\]
    factors through the degree $d$ Veronese embedding of $\PP(U^\vee)$.
\end{remark}

\begin{corollary}\label{cor:numgend2}
    Conjecture~\ref{conj:numgens} holds when $d = 2$.
\end{corollary}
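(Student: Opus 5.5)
This follows from Theorem~\ref{thm:geometricBound} by a binomial identity. Specialize Theorem~\ref{thm:geometricBound} to $d=2$: if $I$ is $\m$-primary and its resolution is virtually linear for $p$ steps, then
\[\mu(I) \ge \binom{p+2}{2} + (p+1)(n-p).\]
It remains to check that this equals $N(n,p,2)$.

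By definition,
\[N(n,p,2) = \binom{p+2}{p} + (n-p)\binom{p+1}{p}.\]
Since $\binom{p+2}{p} = \binom{p+2}{2}$ and $\binom{p+1}{p} = p+1$, we get
\[N(n,p,2) = \binom{p+2}{2} + (n-p)(p+1),\]
which is exactly the bound above. Hence $\mu(I) \ge N(n,p,2)$, which is Conjecture~\ref{conj:numgens} for $d=2$.

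The conventions of Theorem~\ref{thm:geometricBound} cover every case: throughout the paper $1 \le p \le n$ and $d \ge 2$, so $d=2$ is allowed. At the endpoint $p=n$ the bound becomes $\binom{n+2}{2} = \dim S_2 = \mu(\m^2)$, consistent with $I=\m^2$. There is no real obstacle here. The only care needed is to match $\binom{p+d}{d}$ as written in Theorem~\ref{thm:geometricBound} with $\binom{p+d}{p}$ as written in $N(n,p,d)$, which are equal by symmetry of binomial coefficients.
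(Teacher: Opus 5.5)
Your proposal is correct and is the same as the paper's proof. Both specialize Theorem~\ref{thm:geometricBound} to $d=2$ and use $\binom{p+2}{p}=\binom{p+2}{2}$ and $\binom{p+1}{p}=p+1$ to identify the bound with $N(n,p,2)$.
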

\begin{proof}
    When $d = 2$, we have
    \[N(n,p,2) = \binom{p+2}{p} + (n-p)\binom{p+1}{p} = \binom{p+2}{p} + (n-p)(p+1).\]
    By Theorem~\ref{thm:geometricBound}, Conjecture~\ref{conj:numgens} holds when $d = 2$.
\end{proof}

\subsection{Virtually linearly presented ideals}\label{sec:vlp}
We now specialize to the case where $I$ is virtually linearly presented. Let $m = \mu(I) - 1 = \dim V - 1$ and $k = \dim V_1 - 1$. In this case, the complex $\widehat{\mc{A}}^\bullet$ is given by
\[0 \lra V_1 \otimes \mc{O}_{\PP^n}(-1) \overset{\partial_1}{\lra} V \otimes \mc{O}_{\PP^n} \overset{\partial_0}{\lra} \mc{O}_{\PP^n}(d) \lra 0.\]
Let $\mc{S}^\bullet_r = \Sym^r(\mc{A}^\bullet)$ and $\mc{W}^\bullet_r = \bigwedge^r\left( (\mc{A}^\bullet)^\vee \otimes \mc{O}_{\PP^n}(-1) [1]\right)$. Since $\mc{A}^\bullet$ is a two-term complex, by (\ref{eq:termsSym})-(\ref{eq:cohomologySheavesWedge}), we have
\begin{align*}
    &\mc{S}^\bullet_r \colon 0 \lra \bigwedge^r V_1 \otimes \mc{O}_{\PP^n}(-r) \lra \cdots \lra V_1 \otimes \Sym^{r-1}V \otimes \mc{O}_{\PP^n}(-1) \lra \Sym^r V \otimes \mc{O}_{\PP^n} \lra 0,\\
    &\mc{W}^\bullet_r \colon 0 \lra \Sym^r V^\vee \otimes \mc{O}_{\PP^n}(-r)\lra \cdots \lra \bigwedge^{r-1}V_1^\vee \otimes V^\vee \otimes \mc{O}_{\PP^n}(-1) \lra \bigwedge^r V_1^\vee\otimes \mc{O}_{\PP^n} \lra 0,
\end{align*}
with cohomology sheaves
\begin{align*}
    \mc{H}^{-i}(\mc{S}^\bullet_r) &= \bigwedge^i E_1 \otimes \mc{O}_{\PP^n}((r-i)d),\\
    \mc{H}^{-i}(\mc{W}^\bullet_r) &= \bigwedge^{r-i} (E_1^\vee\otimes \mc{O}_{\PP^n}(-1)) \otimes \mc{O}_{\PP^n}(i(-d-1)) = \bigwedge^{r-i}E^\vee_1 \otimes \mc{O}_{\PP^n}(-r-id).
\end{align*}
Since $E_1$ is a vector bundle of rank $k-m+1$, by \cite{RSWY}*{Theorem~3.2}, we have
\begin{equation}\label{eq:self-duality}
    \mc{S}^\bullet_{k-m+1} \simeq \mc{W}^\bullet_{k-m+1} \otimes \mc{O}_{\PP^n}((k-m+1)(d+1)+d-k-1) \quad \text{in }\operatorname{D}^{\mathbf{b}}(\PP^n).
\end{equation}
\begin{remark}
    (\ref{eq:self-duality}) is essential to our proof of Theorem~\ref{thm:lownumgen}. We do not reproduce the proof here, but we remark that it is a direct computation to show that the cohomology sheaves agree.
    \begin{align*}
        \mc{H}^{-i}(\mc{S}^\bullet_{k-m+1}) &= \bigwedge^i E_1 \otimes \mc{O}_{\PP^n}((k-m-i+1)d) \\
        &= \bigwedge^{k-m-i+1} E_1^\vee \otimes \det(E_1) \otimes \mc{O}_{\PP^n}((k-m-i+1)d)\\
        &= \bigwedge^{k-m-i+1} E_1^\vee \otimes \mc{O}_{\PP^n}((k-m-i+1)d + d - k - 1)\\
        &= \mc{H}^{-i}(\mc{W}^\bullet_{k-m+1} \otimes \mc{O}_{\PP^n}((k-m+1)(d+1)+d-k-1)).
    \end{align*}
\end{remark}
\noindent
The following two lemmas are natural generalizations of \cite{RSWY}*{Propositions~3.6 and 3.7}.
\begin{lemma}\label{lem:H1ladderS}
    For all $r \ge \min\{n,k-m+2\}$ and $j \in \bb{Z}$, there is a natural isomorphism
    \[\mathbb{H}^{1}(\widehat{\mc{S}}^\bullet_{r-1} \otimes \mc{O}_{\PP^n}(j + d)) = \mathbb{H}^{1}(\widehat{\mc{S}}^\bullet_{r} \otimes \mc{O}_{\PP^n}(j )).\]
\end{lemma}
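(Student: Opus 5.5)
We will use the same short exact sequence of augmented complexes as in the proof of Theorem~\ref{thm:powers}. With $p=1$, set $\mc{B}^\bullet=\ker(\partial_0)$, whose only cohomology sheaf is $E_1$ in degree $-1$. Then (\ref{eq:symBtoAaug}) gives
\[0 \lra \Sym^r(\mc{B}^\bullet)(j) \lra \widehat{\mc{S}}^\bullet_r(j) \lra \widehat{\mc{S}}^\bullet_{r-1}(j+d) \lra 0.\]
Its long exact hypercohomology sequence contains
\[\mathbb{H}^1(\Sym^r(\mc{B}^\bullet)(j)) \lra \mathbb{H}^1(\widehat{\mc{S}}^\bullet_r(j)) \lra \mathbb{H}^1(\widehat{\mc{S}}^\bullet_{r-1}(j+d)) \lra \mathbb{H}^2(\Sym^r(\mc{B}^\bullet)(j)).\]
By (\ref{eq:hypercohomologyB}) with $p=1$, we have $\mathbb{H}^q(\Sym^r(\mc{B}^\bullet)(j)) = H^{r+q}(\bigwedge^r E_1(j))$. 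Indeed, $\mc{P}^r_1=\bigwedge^r$, and the only nonzero cohomology sheaf of $\Sym^r(\mc{B}^\bullet)$ is $\bigwedge^r E_1$, sitting in degree $-r$. So it suffices to show
\[H^{r+1}\Bigl(\bigwedge^r E_1(j)\Bigr)=H^{r+2}\Bigl(\bigwedge^r E_1(j)\Bigr)=0 \quad\text{for all } j\in\bb{Z}.\]
The isomorphism is then the connecting map, which is natural in the sense required.

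We split into two cases according to which term achieves the minimum.

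\emph{Case $r\ge n$.} Then $r+1>n$, so both groups vanish by Grothendieck vanishing on $\PP^n$. This case is immediate.

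\emph{Case $r\ge k-m+2$.} Here $E_1$ is a vector bundle of rank $k-m+1<r$, so $\bigwedge^r E_1=0$. Hence $\Sym^r(\mc{B}^\bullet)$ is an exact complex and all of its hypercohomology vanishes. The rank of $E_1$ is read off from the exact sequence $0\to E_1\to V_1(-1)\to V\otimes\mc{O}\to\mc{O}(d)\to 0$, which gives $\rk E_1=(k+1)-(m+1)+1=k-m+1$.

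The only point needing care, and the one I expect to be the main obstacle, is checking that (\ref{eq:symBtoAaug}) is a short exact sequence of augmented complexes for every $r$. This rests on $\mc{H}^0$ and $\mc{H}^1$ of $\Sym^r(\mc{B}^\bullet)$ vanishing. For $r\ge1$ this holds because its only cohomology sits in degree $-r\le -1$, as already used in the proof of Theorem~\ref{thm:powers} (there under the hypothesis of virtual linearity for $p$ steps, here with $p=1$). The case $r-1=0$ also needs a check; by the convention $\widehat{\mc{S}}^\bullet_0=\mc{O}\to\mc{O}$, its $\mathbb{H}^1$ is zero. With these in place, the two cases combine to give the claimed natural isomorphism for all $r\ge\min\{n,k-m+2\}$.
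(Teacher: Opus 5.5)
Your proposal is correct and follows essentially the same argument as the paper. You take the hypercohomology long exact sequence of (\ref{eq:symBtoAaug}) and identify $\mathbb{H}^q(\Sym^r(\mc{B}^\bullet)(j)) = H^{r+q}(\bigwedge^r E_1(j))$; the vanishing then comes either from Grothendieck vanishing when $r \ge n$, or from $\bigwedge^r E_1 = 0$ when $r > \rk E_1 = k-m+1$. One small wording fix: the isomorphism is the map on $\mathbb{H}^1$ induced by the surjection of complexes $\widehat{\mc{S}}^\bullet_r(j) \to \widehat{\mc{S}}^\bullet_{r-1}(j+d)$, not the connecting homomorphism.
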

\begin{proof}
    Recall from (\ref{eq:symBtoAaug}) the complex $\mc{B}^\bullet \coloneqq \ker(\mc{A}^\bullet \lra \mc{O}_{\PP^n}(d))$ and the short exact sequence of complexes
    \[0 \lra \Sym^r(\mc{B}^\bullet) \otimes \mc{O}_{\PP^n}(j )\lra \widehat{\mc{S}}^\bullet_{r} \otimes \mc{O}_{\PP^n}(j )\lra \widehat{\mc{S}}^\bullet_{r-1} \otimes \mc{O}_{\PP^n}(j+d) \lra 0.\]
    It follows from the long exact sequence in hypercohomology that we have an exact sequence
    \[ \mathbb{H}^{1}(\Sym^r(\mc{B}^\bullet)\otimes \mc{O}_{\PP^n}(j )) \lra  \mathbb{H}^{1}(\widehat{\mc{S}}^\bullet_{r}\otimes \mc{O}_{\PP^n}(j )) \lra  \mathbb{H}^{1}(\widehat{\mc{S}}^\bullet_{r-1} \otimes \mc{O}_{\PP^n}(j+d)) \lra  \mathbb{H}^{2}(\Sym^r(\mc{B}^\bullet)\otimes \mc{O}_{\PP^n}(j )).\]
    If $r \ge n$, then by the Grothendieck vanishing, we have
    \begin{equation}\label{eq:vanishingwedge1}
        \mathbb{H}^{1}(\Sym^r(\mc{B}^\bullet)\otimes \mc{O}_{\PP^n}(j )) = H^{r+1}\left( \bigwedge^r E_1\otimes \mc{O}_{\PP^n}(j )\right) = 0,
    \end{equation}
    \begin{equation}\label{eq:vanishingwedge2}
        \mathbb{H}^{2}(\Sym^r(\mc{B}^\bullet)\otimes \mc{O}_{\PP^n}(j )) = H^{r+2}\left( \bigwedge^r E_1\otimes \mc{O}_{\PP^n}(j )\right) = 0.
    \end{equation}
    If $r \ge k-m+2 = \rk(E_1)+1$, then $\bigwedge^r E_1 \otimes \mc{O}_{\PP^n}(j )= 0$, and we obtain (\ref{eq:vanishingwedge1}) and (\ref{eq:vanishingwedge2}). This concludes the proof.
\end{proof}
\begin{lemma}\label{lem:H1ladderW}
    For all $r \ge 1$ and $j \in \bb{Z}$, we have
    \[\bb{H}^1({\mc{W}}^\bullet_{r-1} \otimes \mc{O}_{\PP^n}(j-d-1)[1]) = \bb{H}^1(\widehat{\mc{W}}^\bullet_{r} \otimes \mc{O}_{\PP^n}(j)).\]
\end{lemma}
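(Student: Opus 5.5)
The idea is to identify the augmented complex $\widehat{\mc{W}}^\bullet_r$, up to quasi-isomorphism, with a twist and shift of $\mc{W}^\bullet_{r-1}$. The equality of $\mathbb{H}^1$ then follows formally.

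\emph{Setup.} Set $\mc{D}^\bullet \coloneqq (\mc{A}^\bullet)^\vee\otimes\mc{O}_{\PP^n}(-1)[1]$. Concretely, $\mc{D}^\bullet$ is the two-term complex $V^\vee\otimes\mc{O}_{\PP^n}(-1)\to V_1^\vee\otimes\mc{O}_{\PP^n}$ in degrees $-1,0$, and $\mc{W}^\bullet_r=\bigwedge^r(\mc{D}^\bullet)$. Dualizing the exact complex $\widehat{\mc{A}}^\bullet$ shows that $\mc{D}^\bullet$ has only two nonzero cohomology sheaves:
\[
\mc{H}^{-1}(\mc{D}^\bullet)=L\coloneqq\mc{O}_{\PP^n}(-d-1),\qquad \mc{H}^0(\mc{D}^\bullet)=F\coloneqq E_1^\vee(-1).
\]
By the formula for $\mc{H}^{-i}(\mc{W}^\bullet_r)$ recorded above, $\mc{H}^{-i}(\mc{W}^\bullet_r)=\bigwedge^{r-i}F\otimes L^{i}$ for $0\le i\le r$. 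In particular $\mc{H}^0(\mc{W}^\bullet_r)=\bigwedge^rF$, and the augmentation $\theta$ is the natural map $\bigwedge^rV_1^\vee\otimes\mc{O}_{\PP^n}\to\bigwedge^rF$. Since $\widehat{\mc{W}}^\bullet_r=\operatorname{cone}(\mc{W}^\bullet_r\to\mc{H}^0(\mc{W}^\bullet_r))[-1]$ and $\mc{W}^\bullet_r$ lives in degrees $\le 0$, the complex $\widehat{\mc{W}}^\bullet_r$ is quasi-isomorphic to the truncation $\tau_{\le -1}\mc{W}^\bullet_r$. Its cohomology sheaves are therefore $\bigwedge^{r-i}F\otimes L^i$ in degree $-i$ for $1\le i\le r$, and zero elsewhere.

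\emph{Comparison map.} Let $\iota\colon L[1]\to\mc{D}^\bullet$ be the inclusion of the kernel of $V^\vee(-1)\to V_1^\vee$ into the degree $-1$ term; this is a map of complexes. Composing $\iota\otimes\operatorname{id}$ with the multiplication $\mc{D}^\bullet\otimes\bigwedge^{r-1}\mc{D}^\bullet\to\bigwedge^r\mc{D}^\bullet$ gives a map of complexes
\[
\mu\colon L[1]\otimes\mc{W}^\bullet_{r-1}\lra\mc{W}^\bullet_r .
\]
On the source, $\mc{H}^{-i}(L[1]\otimes\mc{W}^\bullet_{r-1})=L\otimes\mc{H}^{-i+1}(\mc{W}^\bullet_{r-1})=L^{i}\otimes\bigwedge^{r-i}F$ for $1\le i\le r$, and $\mc{H}^0$ of the source vanishes. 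Thus the source has exactly the cohomology sheaves of $\tau_{\le-1}\mc{W}^\bullet_r$. Moreover, the composition of $\mu$ with $\theta$ vanishes, since the source has no degree-$0$ term beyond what maps into the degree-$(-1)$ part, i.e.\ nothing lands in degree $0$ of the target. Hence $\mu$ lifts to a map of complexes $L[1]\otimes\mc{W}^\bullet_{r-1}\to\widehat{\mc{W}}^\bullet_r$.

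\emph{Main step: $\mu$ is a quasi-isomorphism.} I expect this to be the only real point. The claim is local, so I will check it on an open set where the locally free pieces split. There the short exact sequences $0\to L\to V^\vee(-1)\to K\to0$ and $0\to K\to V_1^\vee\to F\to0$ split, where $K$ is the image, and $\mc{D}^\bullet$ becomes isomorphic to $L[1]\oplus(K\xrightarrow{\ \sim\ }K)\oplus F$. The middle summand is split exact, so it contributes nothing up to homotopy. Consequently, up to homotopy,
\[
\bigwedge^r\mc{D}^\bullet\simeq\bigoplus_{a\ge0}\bigwedge^a(L[1])\otimes\bigwedge^{r-a}F=\bigoplus_{a\ge 0}L^{a}[a]\otimes\bigwedge^{r-a}F .
\]
In this description, $\mu$ is the inclusion of the summands with $a\ge1$, which is an isomorphism onto $\tau_{\le-1}$. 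One must check that the homotopy equivalences are compatible with the multiplication, i.e.\ that they are induced by the splitting via functoriality of $\bigwedge$. Granting that, $\mu$ is a quasi-isomorphism, and so
\[
\widehat{\mc{W}}^\bullet_r\simeq \mc{W}^\bullet_{r-1}\otimes L[1]
\]
in $\operatorname{D}^{\mathbf b}(\PP^n)$.

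\emph{Conclusion.} Tensor this quasi-isomorphism with $\mc{O}_{\PP^n}(j)$ and take $\mathbb{H}^1$. Since $L(j)=\mc{O}_{\PP^n}(j-d-1)$, this gives
\[
\mathbb{H}^1\bigl(\widehat{\mc{W}}^\bullet_r\otimes\mc{O}_{\PP^n}(j)\bigr)=\mathbb{H}^1\bigl(\mc{W}^\bullet_{r-1}\otimes\mc{O}_{\PP^n}(j-d-1)[1]\bigr),
\]
which is the statement. For $r=1$, $\mc{W}^\bullet_0=\mc{O}_{\PP^n}$ and the claim reduces to $\widehat{\mc{D}}^\bullet\simeq L[1]$, which holds directly.
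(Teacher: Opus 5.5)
Your proof is correct. Your $\mu$ is in fact the same injection the paper uses: applying Remark~\ref{rmk:C-2linebundle} to $0\to\mc{O}_{\PP^n}(-d-1)\to(\mc{A}^\bullet)^\vee(-1)\to(\mc{B}^\bullet)^\vee(-1)\to0$ gives $\mc{W}^\bullet_{r-1}\otimes L[1]\hookrightarrow\mc{W}^\bullet_r$, which is multiplication by the subbundle $L$.

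The two proofs differ in how they show this map is a quasi-isomorphism onto the augmented complex.

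\textbf{The paper's route (global).} The cokernel of $\mu$ is $\mc{Q}^\bullet_r=\Sym^r((\mc{B}^\bullet)^\vee(-1))[r]$. By (\ref{eq:cohomologySheavesSym}), its only cohomology is $\bigwedge^rE_1^\vee(-r)$ in degree $0$, so $\widehat{\mc{Q}}^\bullet_r$ is exact. Because the source sits in negative degrees, the sequence of augmented complexes stays short exact, and the result follows immediately.

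\textbf{Your route (local).} You split $\mc{D}^\bullet\cong X\oplus Y\oplus Z$ with $X=L[1]$, $Y=(K\xrightarrow{\sim}K)$ and $Z=F$. This is self-contained, but it leaves the compatibility check you flagged, which you should state rather than grant. It is immediate:
\begin{itemize}
\item the splitting is an isomorphism of complexes, not merely a homotopy equivalence, and it carries $\iota$ to the inclusion of $X$;
\item $\bigwedge(X\oplus Y\oplus Z)\cong\bigwedge X\otimes\bigwedge Y\otimes\bigwedge Z$ is an isomorphism of algebras;
\item hence $\mu$ maps $\bigwedge^aX\otimes\bigwedge^bY\otimes\bigwedge^cZ$ isomorphically onto $\bigwedge^{a+1}X\otimes\bigwedge^bY\otimes\bigwedge^cZ$, since $L\otimes\Sym^aL\cong\Sym^{a+1}L$.
\end{itemize}
So locally $\mu$ is injective with cokernel $\bigwedge^r(Y\oplus Z)$, which is exactly the local form of $\mc{Q}^\bullet_r$. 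All that remains is exactness of $\bigwedge^bY$ for $b\ge1$. This is a de~Rham-type Koszul complex, exact in characteristic $0$. That suffices over $\CC$, but note it fails in characteristic $p$ with the $\Sym$ convention, so your phrase ``split exact, hence contributes nothing up to homotopy'' is hiding a characteristic-$0$ input. In effect, you reprove in this special case the cohomology computation that the paper imports from (\ref{eq:cohomologySheavesSym}).

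One small slip: $\widehat{\mc{A}}^\bullet$ is not exact, since its $\mc{H}^{-1}$ is $E_1$. What you actually dualize is the exact sequence $0\to E_1\to V_1\otimes\mc{O}_{\PP^n}(-1)\to V\otimes\mc{O}_{\PP^n}\to\mc{O}_{\PP^n}(d)\to0$ of vector bundles. This does give your description of the cohomology sheaves of $\mc{D}^\bullet$.
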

\begin{proof}
    Consider the dual of the natural map $\mc{A}^\bullet (1) \lra \mc{O}_{\PP^n}(d+1)$. We get a short exact sequence 
    \[0 \lra \mc{O}_{\PP^n}(-d-1) \lra (\mc{A}^\bullet)^\vee (-1) \lra (\mc{B}^\bullet)^\vee (-1) \lra 0.\]
    Applying Remark~\ref{rmk:C-2linebundle} and then shifting by $[r]$, we obtain a short exact sequence of complexes
    \[0 \lra {\mc{W}^\bullet_{r-1} \otimes \mc{O}_{\PP^n}(j-d-1)[1]} \lra \mc{W}^\bullet_{r} \otimes \mc{O}_{\PP^n}(j) \lra \Sym^r((\mc{B}^\bullet)^\vee (-1)) \otimes \mc{O}_{\PP^n}(j)[r] \lra 0.\]
    Since ${\mc{W}^\bullet_{r-1} \otimes \mc{O}_{\PP^n}(j-d-1)[1]}$ is concentrated in strictly negative cohomological degrees, its associated augmented complex is itself. This induces a short exact sequence
    \[0 \lra {\mc{W}^\bullet_{r-1} \otimes \mc{O}_{\PP^n}(j-d-1)[1]} \lra \widehat{\mc{W}}^\bullet_{r} \otimes \mc{O}_{\PP^n}(j) \lra \widehat{\mc{Q}}^\bullet_r\otimes \mc{O}_{\PP^n}(j) \lra 0,\]
    where $\mc{Q}^\bullet_r \coloneqq \Sym^r((\mc{B}^\bullet)^\vee (-1))[r]$. By (\ref{eq:cohomologySheavesSym}), the cohomology sheaves of $\mc{Q}^\bullet_r$ are
    \[\mc{H}^{i}(\mc{Q}^\bullet_r) = \begin{cases}
        \bigwedge^r E_1^\vee \otimes \mc{O}_{\PP^n}(-r) & i = 0\\
        0 &\text{otherwise}.
    \end{cases}\]
    In particular, $\widehat{\mc{Q}}^\bullet_r$ is exact. Hence, the map
    \[{\mc{W}^\bullet_{r-1} \otimes \mc{O}_{\PP^n}(j-d-1)[1]} \lra \widehat{\mc{W}}^\bullet_{r} \otimes \mc{O}_{\PP^n}(j)\]
    is a quasi-isomorphism. This concludes the proof.
\end{proof}
\noindent Recall that $m = \mu(I) - 1 = \dim V - 1$.
\begin{theorem}\label{thm:lownumgen}
    If $I$ is virtually linearly presented, then 
    \[I^{t} = \m^{td} \quad \text{for all}\quad 
    t \ge \max\left\{ n, \left\lceil\frac{m}{d} \right\rceil\right\}.\]
    In particular, if $m\le nd$, then $I^n = \m^{nd}$.
\end{theorem}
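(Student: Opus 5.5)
The plan is to reduce the vanishing $\mathbb{H}^1(\widehat{\mc{S}}^\bullet_t)=0$, which by (\ref{eq:equalityiffH1vanishing}) is equivalent to $I^t=\m^{td}$, to a single power $r_0$. Here
\[r_0 \coloneqq k-m+1=\rk(E_1)\]
is the power at which the self-duality (\ref{eq:self-duality}) applies. At that power I will pass to the dual complex $\mc{W}^\bullet$, where the hypercohomology can be read off from line bundle cohomology.

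\emph{Step 1: ladder to $r_0$.} Lemma~\ref{lem:H1ladderS} gives natural isomorphisms
\[\mathbb{H}^1(\widehat{\mc{S}}^\bullet_{r}(j))\cong\mathbb{H}^1(\widehat{\mc{S}}^\bullet_{r-1}(j+d)) \quad \text{for all } r\ge \min\{n,r_0+1\}.\]
These are isomorphisms, so the chain can be run in either direction. I claim that for every $t\ge n$,
\[\mathbb{H}^1(\widehat{\mc{S}}^\bullet_t)\cong \mathbb{H}^1\bigl(\widehat{\mc{S}}^\bullet_{r_0}((t-r_0)d)\bigr).\]
If $r_0\le t$, we step down from $t$ to $r_0$. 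Every step used has $r\ge r_0+1$, so the lemma applies. If $r_0>t\ge n$, we step up from $t$ to $r_0$. Every step used has $r\ge n$, so again the lemma applies. The degenerate case $r_0=0$, that is $E_1=0$, can be handled the same way with the convention $\mc{S}_0=\mc{O}$, or directly; I will check it separately.

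\emph{Step 2: self-duality.} The quasi-isomorphism (\ref{eq:self-duality}) identifies $\mc{H}^0$ on both sides. Hence it induces a quasi-isomorphism of augmented complexes
\[\widehat{\mc{S}}^\bullet_{r_0}\simeq \widehat{\mc{W}}^\bullet_{r_0}(c), \qquad c=r_0(d+1)+d-k-1.\]
I need to verify this carefully: the augmentation is a cone of the map to $\mc{H}^0$, and this construction is functorial in the derived category. Then Lemma~\ref{lem:H1ladderW} with $r=r_0$ gives
\[\mathbb{H}^1(\widehat{\mc{S}}^\bullet_t)\cong\mathbb{H}^2\bigl(\mc{W}^\bullet_{r_0-1}(j')\bigr),\qquad j'=(t-r_0)d+c-d-1=td+r_0-k-2=td-m-1.\]

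\emph{Step 3: vanishing via the first spectral sequence.} The complex $\mc{W}^\bullet_{r_0-1}$ has its term in cohomological degree $-a$ equal to a direct sum of copies of $\mc{O}_{\PP^n}(-a)$, for $0\le a\le r_0-1$. After twisting, the $E_1$-page entries are $H^q(\mc{O}(j'-a))$ in total degree $q-a$.
\begin{itemize}
\item An $H^0$ entry sits in total degree $-a\le 0$, so it never reaches total degree $2$.
\item An $H^q$ with $1\le q\le n-1$ is zero.
\item An $H^n$ entry contributes to total degree $2$ only when $a=n-2$, and it is nonzero only if $j'-(n-2)\le -n-1$.
\end{itemize}
If $n-2>r_0-1$ there is no such term. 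Otherwise we need
\[j'-(n-2)\ge -n, \quad\text{i.e.}\quad td\ge m-1,\]
and this holds since $t\ge\lceil m/d\rceil$. So the $E_1$-page is zero in total degree $2$, and therefore $\mathbb{H}^1(\widehat{\mc{S}}^\bullet_t)=0$ for every $t\ge\max\{n,\lceil m/d\rceil\}$. If $m\le nd$, then $\lceil m/d\rceil\le n$, which gives the last claim.

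I expect the main obstacle to be Step 2. It requires that the derived isomorphism of \cite{RSWY}*{Theorem~3.2} is compatible with the augmentation maps, so that it genuinely identifies $\widehat{\mc{S}}$ with $\widehat{\mc{W}}$. It also requires getting the twist $c$ and the shift conventions right, so that $j'$ comes out exactly as $td-m-1$. I would first check this against the cohomology-sheaf computation in the remark following (\ref{eq:self-duality}).
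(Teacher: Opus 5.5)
Your proposal is correct and follows the paper's proof essentially step for step. Both arguments ladder via Lemma~\ref{lem:H1ladderS} to $r_0=k-m+1$, apply the self-duality (\ref{eq:self-duality}), then Lemma~\ref{lem:H1ladderW}, and finish with a line-bundle cohomology check on $\mc{W}^\bullet_{k-m}\otimes\mc{O}_{\PP^n}(td-m-1)[1]$; the paper cites Lemma~\ref{lem:hypercohomology2} for this last step where you read off the $E_1$-page directly. The compatibility you worry about in Step 2 is automatic: for complexes concentrated in nonpositive degrees, $\widehat{\mc{C}}^\bullet \simeq \tau_{\le -1}\mc{C}^\bullet$, and truncation is functorial in $\operatorname{D}^{\mathbf{b}}(\PP^n)$.
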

\begin{proof}
By (\ref{eq:equalityiffH1vanishing}), it suffices to show that $\mathbb{H}^1(\widehat{\mc{S}}^\bullet_{t}) = 0$. Since
\begin{align*}
    \mathbb{H}^1\left(\widehat{\mc{S}}^\bullet_{t} \right) &\overset{\text{Lemma~\ref{lem:H1ladderS}}}{=} \mathbb{H}^1\left(\widehat{\mc{S}}^\bullet_{k-m+1} \otimes \mc{O}_{\PP^n}((t-k+m-1)d) \right)\\
    &\overset{\text{\makebox[\widthof{\scriptsize \text{Lemma~\ref{lem:H1ladderS}}}][c]{(\ref{eq:self-duality})}}}{=} \mathbb{H}^1\left(\widehat{\mc{W}}^\bullet_{k-m+1} \otimes \mc{O}_{\PP^n}((t-k+m-1)d + (k-m+1)(d+1) + d - k - 1 )\right)\\
    &\overset{\hphantom{\text{Lemma~\ref{lem:H1ladderS}}}}{=}\mathbb{H}^1\left(\widehat{\mc{W}}^\bullet_{k-m+1} \otimes \mc{O}_{\PP^n}(td + d - m)\right)\\
    &\overset{\text{Lemma~\ref{lem:H1ladderW}}}{=}\mathbb{H}^1\left({\mc{W}}^\bullet_{k-m} \otimes \mc{O}_{\PP^n}(td - m - 1)[1]\right),
\end{align*}
we show $\mathbb{H}^1\left({\mc{W}}^\bullet_{k-m} \otimes \mc{O}_{\PP^n}(td - m - 1)[1]\right) = 0$. Indeed, by (\ref{eq:termsWedge}), the complex $\mc{C}^\bullet \coloneqq {\mc{W}}^\bullet_{k-m} \otimes \mc{O}_{\PP^n}(td - m - 1)[1]$ is concentrated in strictly negative cohomological degrees with terms given by
\[\mc{C}^{-i} = \left( \bigwedge^{k-m-i+1}V_1^\vee\right) \otimes \left(\Sym^{i-1} V^\vee\right) \otimes \mc{O}_{\PP(W)}(td-m -i).\]
Since $t \ge \max\left\{ n, \left\lceil\frac{m}{d} \right\rceil\right\}$, we have
\[td-m -i \ge -i.\]
Hence, by Lemma~\ref{lem:hypercohomology2}, we have $\bb{H}^1(\mc{C}^\bullet) = 0$. This concludes the proof.
\end{proof}
\begin{corollary}\label{cor:numgenp1}
    If $I$ is virtually linearly presented, then $\mu(I) \ge nd + 1 = N(n,1,d)$. 
\end{corollary}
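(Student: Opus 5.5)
We argue by contradiction, combining Theorem~\ref{thm:lownumgen} with the dimension count (\ref{eq:lowerbound}). Suppose $\mu(I) \le nd$, so that $m = \mu(I) - 1 \le nd - 1 \le nd$. Then $\lceil m/d\rceil \le n$, and Theorem~\ref{thm:lownumgen} gives $I^n = \m^{nd}$. Comparing dimensions in degree $nd$, as in (\ref{eq:lowerbound}), yields
\[\binom{n + \mu(I) - 1}{n} = \dim \Sym^n V \ge \dim S_{nd} = \binom{nd+n}{n}.\]
Since $\binom{n+x-1}{n}$ is strictly increasing in $x \ge 1$, this forces $\mu(I) - 1 \ge nd$, that is, $\mu(I) \ge nd+1$. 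This contradicts the assumption $\mu(I)\le nd$.

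In fact no contradiction is needed. One can argue directly: either $m \ge nd$, in which case the bound holds trivially, or $m < nd$, in which case Theorem~\ref{thm:lownumgen} applies and the dimension count gives $m \ge nd$ anyway.

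It remains to check that $N(n,1,d) = \binom{1+d}{1} + (n-1)\binom{d}{1} = d+1+(n-1)d = nd+1$, which is immediate.

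The main obstacle is already absorbed into Theorem~\ref{thm:lownumgen}, namely the self-duality input (\ref{eq:self-duality}). The only care needed here is the monotonicity of the binomial coefficient, which makes the inequality $\dim\Sym^nV\ge \dim S_{nd}$ translate correctly into $\dim V\ge nd+1$.
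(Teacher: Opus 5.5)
Your proposal is correct and is essentially the paper's own proof. The paper also assumes $\mu(I)\le nd$, invokes Theorem~\ref{thm:lownumgen} to get $I^n=\m^{nd}$, and reaches a contradiction via the dimension count (\ref{eq:lowerbound}), written there as $\binom{nd+n-1}{n}\ge\binom{nd+n}{n}$; your check that $\lceil m/d\rceil\le n$ and your monotonicity remark only spell out steps the paper leaves implicit.
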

\begin{proof}
    We will prove by contradiction. Suppose $\mu(I) \le nd$. By Theorem~\ref{thm:lownumgen}, we have $I^n = \m^{nd}$. In particular, by (\ref{eq:lowerbound}), we have
    \[\binom{nd+n-1}{n} \ge \dim\left( (I^n)_{nd}\right) = \dim\left( (\m^{nd})_{nd}\right) = \binom{nd + n}{n}.\]
    This is a contradiction.
\end{proof}
\begin{remark}
    Suppose the resolution of $I$ is linear for $n-2$ steps. The regularity bound from \cite{yang}*{Theorem~1.3} gives
    \[\reg(S/I) \le 2d-2.\]
    Concretely, its Betti table is given by
    \[
    \begin{array}{c|cccccccccc}
     & 0 & 1 &\cdots & n-1 & n  &n+1\\\hline
     0 & 1 & - &\cdots &- & - &-\\
     \vdots \\
    d-1 & - & \ast & \cdots & \ast & \ast  & \ast\\
    d & - & - & \cdots & - & \ast & \ast\\
    \vdots &\vdots&\vdots&\vdots&\vdots&\vdots & \vdots\\
    2d-2 & - & - & \cdots & - & \ast & \ast\\
    \end{array}
    \]
    Define $D$ to be the degree sequence
    \[D = (0,d,d+1,\dots,d+n-2,2d+n-2,2d+n-1) \quad \text{with }D_0 = 0, \dots,D_{n+1} = 2d+n-1.\]
    It follows from direct computation using Boij--Söderberg theory, see \cite{ES}, that 
    \[\mu(I) \ge \prod_{i = 2}^{n+1}\frac{D_i}{D_i - D_1} \ge \binom{n+d-2}{n-2} + 2\binom{n+d-3}{n-2} = N(n,n-2,d).\]
    In fact, applying the same argument shows that if the resolution of $I$ is linear for $p$ steps, where $p \ge \left \lceil \frac{n-2}{3}\right\rceil$, then $\mu(I) \ge N(n,p,d)$. However, we have not been able to conclude Conjecture~\ref{conj:numgens} in full generality.
\end{remark}

\end{document}